\documentclass[a4paper,12 pt]{article}
\usepackage[utf8]{inputenc}
\usepackage{amsmath}
\usepackage{amssymb}
\usepackage{latexsym}
\usepackage{amsthm}
\usepackage{amsfonts}
\usepackage{graphicx}
\usepackage{color}
\usepackage{shuffle}
\usepackage{graphicx}
\usepackage{xcolor}

\newtheorem{theorem}{Theorem}
\newtheorem{lemma}{Lemma}
\newtheorem{proposition}{Proposition}
\newtheorem{corollary}{Corollary}

\def \a{\alpha}
\def \z{\zeta}

\def\l{\textbf{l}}
\def\p{\textbf{p}}
\def\N{\mathbb{N}}
\def\S{\overset{t}{\shuffle}}

\def\SH{\shuffle}

\def\SH{\shuffle}

\def\r{\rightarrow}

\def\hh{\mathfrak{h}}

\usepackage[a4paper,top=3cm,bottom=2cm,left=1.5cm,right=1.5cm,marginparwidth=1cm]{geometry}
\title{Generalized Euler decomposition formula for interpolated multiple zeta values}
\author{$\text{Pitu Sarkar}^a,~ \text{Nita Tamang}^b$ \\ $\text{Department of Mathematics, University of North Bengal}^{a,b}$\\
	West Bengal, India 734013\\ $\text{pitucob2016@gmail.com}^a$,~$\text{nita\_math@nbu.ac.in}^b$  }
\date{}

\begin{document}
	
	\maketitle
	\begin{abstract}
		In this paper, we obtain a general $t$-shuffle product formula, using which we derive a generalized Euler decomposition formula for interpolated multiple zeta values. We also provide the same formula in case of height one through two different approaches: one by combinatorial description and another one by recursive formula. 
	\end{abstract}
\textbf{Keywords:} Interpolated multiple zeta values; $t$-shuffle product; shuffle product.\\ 
\textbf{Mathematics Subject Classification:} 11M32.

\section{Introduction}

Interpolated Multiple Zeta Values (IMZVs) are a class of mathematical objects that arise from the study of multiple zeta values (MZVs), which are special values of certain classes of Dirichlet series at positive integers. These values are deeply connected to numerous branches of number theory, mathematical physics, algebra (especially in the context of modular forms), quantum field theory, and the study of periods in algebraic geometry.

For an index $\l=\left(l_1,\dots,l_n\right)$ of positive integers, with $l_1 >1$ and for a variable $t$, interpolated multiple zeta value(IMZV) defined by S. Yamamoto \cite{2} is the polynomial
$$\zeta^{t}(\l)= \zeta^{t}\left( l_1,\dots, l_n\right)=\sum_{\substack{\p=\left(l_1\Box l_2\Box \dots \Box l_n\right)\\ \Box=``," \text{or} ``+"}}t^{n-\text{dep}(\p)}\zeta(\p)~(\in{\mathbb{R}[t]}),$$
where depth is given by $\text{dep}(\l)=n$, and $\zeta(\p)$ is the multiple zeta value,  defined by the convergent series
\begin{align*}
	\zeta(\l)= \zeta \left( l_1,\dots, l_n\right)=\sum_{m_1>\dots >m_n\geq 1}\dfrac{1}{m_1^{l_1}\dots m_n^{l_n}}.
\end{align*} where the summation runs over the positive integers $m_{1}, m_{2},...., m_{n}$. For $t=1$, interpolated multiple zeta value gives multiple zeta-star value which is a variant of multiple zeta value, and is defined by
\begin{align*}
	\zeta^{\star}(\l)= \zeta \left( l_1,\dots, l_n\right)=\sum_{m_1\geq \dots \geq m_n\geq 1}\dfrac{1}{m_1^{l_1}\dots m_n^{l_n}}.
\end{align*}
Therefore, interpolated multiple zeta value interpolates both multiple zeta value as well as multiple zeta-star value.
For the index $\l=\left(l_1,\dots,l_n\right)$, height is the number of $l_i$ which are greater than $1$.

Now we consider an algebraic structure of IMZVs \cite{6}.
Let $A=\{x, y\}$ be a set of two noncommutative letters. Define $A^*$ as the set of all words generated by the set $A$, which also includes the empty word $1$. Let $\hh_t =\mathbb{Q}[t]<A>$ be the noncommutative polynomial algebra over $\mathbb{Q}[t]$ generated by $A$. Define two subalgebras of $\hh_t$,~$$\hh_t^1=\mathbb{Q}[t]+\hh_ty,~ \hh_t^0= \mathbb{Q}[t]+x\hh_ty.$$
Given any $k\in {\N}$, define $z_k=x^{k-1}y$. For $t=0$, $\hh_0, \hh_0^1$, and $\hh_0^0$ are respectively denoted by $ \hh, \hh^1$, and $ \hh^0$.\\
The $t$-shuffle product $\S$ on $h_t$ is $ \mathbb{Q}[t]$-bilinear, and satisfies the rules
\begin{align*}
&{(1)} 1\S w =w \S 1 =w,\\
&{(2)} aw_1 \S bw_2 = a(w_1 \S bw_2) + b(aw_1 \S  w_2) - \delta(w_1)\rho(a)bw_2 - \delta(w_2)\rho(b)aw_1,
\end{align*}
where $ w, w_1, w_2 \in{A^*}, a,b \in{A}$, the map $\delta :A^* \rightarrow \{0,1\}$ is defined by 
$$\delta(w) =
\begin{cases}
	1& \text{if $w=1$},\\
	0& \text{if $w\neq{1}$}
\end{cases}
$$
and the map $\rho : A \rightarrow \hh_t$ is defined by $\rho(x) = 0, ~ \rho(y) = tx.$
Under the $t$-shuffle product, $\hh_t$ becomes a commutative $ \mathbb{Q}[t]$-algebra, and $\hh_t^1$, $\hh_t^0$ are also subalgebras. For $t=0$, $\overset{0}{\SH}$ is denoted by $\SH$, which is the shuffle product of multiple zeta values (see \cite{15}).\\
Define a $ \mathbb{Q}[t]$-linear map $Z^t:\hh_t^0 \rightarrow \mathbb{R}[t]$ by
$Z^t(1)=1$ and $$Z^t(x^{l_1-1}yx^{l_2-1}y \dots x^{l_n-1}y)=\z^t(l_1, \dots ,l_n),$$
where $n, l_1, l_2, \dots ,l_n$ are positive integers with $l_1 >1$. Then one can show that the map $Z^t:(\hh_t^0,\S) \rightarrow \mathbb{R}[t]$ is an algebra homomorphism i.e., we have $$\z^t(w_1 \S w_2)=\z^t(w_1)\z^t(w_2),$$ for any $w_1, w_2 \in \hh_t^0.$\\
The combinatorial description of the shuffle product \cite{1} is given by: 
\begin{align}
	&	d_1 \dots d_n \SH d_{n+1} \dots d_{n+m}=\sum_{\sigma \in {\mathbb{P}}_{n,m}} d_{{\sigma}(1)} \dots d_{{\sigma}(n+m)},
\end{align}
where $d_1, \dots ,d_{n+m}  \in A$, and  ${{\mathbb{P}}_{n,m}}$ is the set of all permutations $\sigma$ in $\{1,2, \dots , n+m\} $
with the condition
\begin{align*}
	& {\sigma}^{-1} (1) \leq {\sigma }^{-1}(2) \leq \dots \leq {\sigma}^{-1} (n);\\
	 &   {\sigma}^{-1} (n+1) \leq {\sigma}^{-1} (n+2) \leq \dots  \leq {\sigma}^{-1} (n+m).\\
\end{align*}
The shuffle product plays a pivotal role in obtaining the Euler decomposition formula for multiple zeta values \cite{11}, which is given by the expression
\begin{align}
	\label{equn1}
	\z(i)\z(j)&= \sum_{k=1}^{i} \binom{i+j-k-1}{n-1}\z(i+j-k,k) + \sum_{k=1}^{j} \binom{i+j-k-1}{i-1}\z(i+j-k,k),
\end{align}
where $i, j \geq 2.$
This formula is a fundamental result in the theory of multiple zeta values.

 In this paper, we obtain a generalized Euler decomposition formula in case of interpolated multiple zeta values (Section 2). We also derive a similar formula for height one case applying two different approaches (Section 3). The first one is by combinatorial description of shuffle product, and the second one is by recursive formula for $t$-shuffle product. In Section 4, we discuss some applications of the height one formula, and in Section 5, some more observation on a $t$-shuffle product formula is given for height 2 case. \\
 The main theorems of this paper are stated bellow: 
\begin{theorem}
	\label{T101}
	For integers ${m,u \geq 2, p \geq 1, n,v \geq 0}$, we have
	\begin{align}
		\label{Equn101}  
		&\z^t (m,\underbrace{p, \dots, p}_n) \z^t (u,\underbrace{p, \dots, p}_v)\nonumber \\
		&  = \sum_{\substack{{\a}_1+ \dots +{\a}_{n+v+2}=\\(n+v)(p-1)+(m+u)-2\\ {\a}_1, \dots , {\a}_{n+v+2} \geq 0}}  \bigg[ C_1 \times \bigg\{ \z^t({\a}_1+1, \dots, {\a}_{n+v+2}+1)-t \z^t({\a}_1+1, \dots ,{\a}_{n-l_{q+1}+v+1}+1,   \nonumber \\
		& \ \  {\a}_{n-l_{q+1}+v+2}+{\a}_{n-l_{q+1}+v+3}+2, {\a}_{n-l_{q+1}+v+4}+1, \dots , {\a}_{n+v+2}+1)    \bigg\} +C_2 \times \bigg\{ \z^t({\a}_1+1, \dots, \nonumber \\
		& \ \   {\a}_{n+v+2}+1)-t \z^t({\a}_1+1, \dots, {\a}_{n+v-n_{q}+1}+1,{\a}_{n+v-n_{q}+2}+{\a}_{n+v-n_{q}+3}+2, {\a}_{n+v-n_{q}+4}+1, \dots ,   \nonumber \\
		& \ \  {\a}_{n+v+2}+1  ) \bigg\} +C_3  \times \bigg\{ \z^t({\a}_1+1, \dots, {\a}_{n+v+2}+1)    -t \z^t({\a}_1+1, \dots, {\a}_{n-n_{q+1}+v+1}+1,               \nonumber \\
		& \ \ {\a}_{n-n_{q+1}+v+2}+ {\a}_{n-n_{q+1}+v+3}+2, {\a}_{n-n_{q+1}+v+4}+1, \dots ,{\a}_{n+v+2}+1) \bigg\}  +C_4 \times \bigg\{ \z^t({\a}_1+1, \dots,     \nonumber \\
		& \ \ {\a}_{n+v+2}+1) -t \z^t({\a}_1+1, \dots ,{\a}_{n+v-l_{q}+1}+1, {{\a}_{n+v+2-l_{q}}}+{{\a}_{n+v-l_{q}+3}}+2, {\a}_{n+v-l_{q}+4}+1, \dots ,\nonumber \\
		& \ \ {\a}_{n+v+2}+1) \bigg\} \bigg],
	\end{align}
 where
\begin{align*}
	& C_1= \sum_{\substack{l_1+ \dots + l_{q+1}=n+1\\ n_1+ \dots + n_q=v+1\\q \geq 1, l_i \geq 1, n_j \geq 1}} \prod_{i=1}^{L_q+v+1} \binom{{\a}_i}{{\beta}_i} \prod_{j=L_q+v+3}^{n+v+2} {\delta}_{{\a}_j,p-1},  \\
	& C_2= \sum_{\substack{l_1+ \dots + l_{q}=n+1\\ n_1+ \dots + n_q=v+1\\q \geq 1, l_i \geq 1, n_j \geq 1}} \prod_{i=1}^{n+N_{q-1}+1} \binom{{\a}_i}{{\beta}_i} \prod_{j=n+N_{q-1}+3}^{n+v+2} {\delta}_{{\a}_j,p-1},  \\
	& C_3= \sum_{\substack{l_1+ \dots + l_{q}=n+1\\ n_1+ \dots + n_{q+1}=v+1\\q \geq 1, l_i \geq 1, n_j \geq 1}} \prod_{i=1}^{N_q+n+1} \binom{{\a}_i}{{\gamma}_i} \prod_{j=N_q+n+3}^{n+v+2} {\delta}_{{\a}_j,p-1}, \\
\end{align*}
\begin{align*}
	& C_4= \sum_{\substack{l_1+ \dots + l_{q}=n+1\\ n_1+ \dots + n_q=v+1\\q \geq 1, l_i \geq 1, n_j \geq 1}} \prod_{i=1}^{v+L_{q-1}+1} \binom{{\a}_i}{{\gamma}_i} \prod_{j=v+L_{q-1}+3}^{n+v+2} {\delta}_{{\a}_j,p-1}. \nonumber
\end{align*}
Here for positive integers $l_1, \dots , l_q (,l_{q+1})$ and $n_1, \dots , n_q (,n_{q+1})$ are given by
\begin{align*}
	&\begin{aligned}
		{\beta}_{L_j+N_j+1} &=(L_j+N_j-1)(p-1)+(m+u)-2-\sum_{i=1}^{L_j+N_j} {\a}_i;\\
		{\beta}_{L_j+N_j+d} &=p-1, \hspace{.2em} (2 \leq d \leq l_{j+1});
	\end{aligned}
	\hspace{.4em}\text{for}~ j=0,1, \dots, q, \\
	&\begin{aligned}
		{\beta}_{L_{j+1}+N_j+1} &=(L_{j+1}+N_j-1)(p-1)+(m+u)-2-\sum_{i=1}^{L_{j+1}+N_j} {\a}_i;\\
		{\beta}_{L_{j+1}+N_j+d} &=p-1, \hspace{.1em} (2 \leq d \leq n_{j+1});
	\end{aligned}
	\hspace{.3em}\text{for}~ j=0,1, \dots, q-1,
\end{align*}
and 
\begin{align*}
	&\begin{aligned}
		{\gamma}_{L_j+N_j+1} &=(L_j+N_j-1)(p-1)+(m+u)-2-\sum_{i=1}^{L_j+N_j} {\a}_i;\\
		{\gamma}_{L_j+N_j+d} &=p-1, \hspace{.2em} (2 \leq d \leq n_{j+1})\\
	\end{aligned}
	\hspace{.4em}\text{for}~ j=0,1, \dots, q, \\
	&\begin{aligned}
		{\gamma}_{L_{j}+N_{j+1}+1} &=(N_{j+1}+L_j-1)(p-1)+(m+u)-2-\sum_{i=1}^{N_{j+1}+L_j} {\a}_i;\\
		{\gamma}_{L_{j}+N_{j+1}+d} &=p-1, \hspace{.2em} (2 \leq d \leq l_{j+1})
	\end{aligned}
	\hspace{.2em}\text{for}~ j=0,1, \dots, q-1,  \\ \nonumber
\end{align*}
with $L_j=l_1+l_2+ \dots +l_j,$~ $N_j= n_1+ \dots + n_j$ for $j \geq 0$ and $ L_0=N_0=0 $, and $\delta$ is Kronecker's delta symbol given by
$$\delta(w) =
\begin{cases}
	1& \text{if w=1},\\
	0& \text{if $w\neq{1}$}
\end{cases}
.$$
\end{theorem}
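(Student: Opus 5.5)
Since $Z^t$ is an algebra homomorphism on $(\hh_t^0,\S)$, the plan is to compute the $t$-shuffle product
$$w_1=z_m z_p^{n}=x^{m-1}y\,(x^{p-1}y)^n,\qquad w_2=z_u z_p^{v}=x^{u-1}y\,(x^{p-1}y)^v$$
explicitly and then apply $Z^t$. To organize the computation, I would first prove a general $t$-shuffle formula for arbitrary words. Writing $w_1\S w_2$ via the recursive rule (2), each term is produced by a path through the two words, exactly as for the ordinary shuffle. The correction terms $-\delta(w_i)\rho(b)\,\cdot$ only appear when one of the two words is reduced to its last letter $y$ and meets a $y$ of the other word. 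In that case the term $t\,x$ replaces the two adjacent letters $y\,y$ at the end of an interleaving. Iterating, I expect
$$w_1\S w_2 = w_1\SH w_2 - t\,\Phi(w_1,w_2),$$
where $\Phi$ sums, over the same shuffles, the word in which the final $y$ coming from the word exhausted first is merged with the next $y$. That merge is the operation sending $(\dots,\a_{r}+1,\a_{r+1}+1,\dots)$ to $(\dots,\a_r+\a_{r+1}+2,\dots)$. I would prove this first identity by induction on $|w_1|+|w_2|$ using rule (2).

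Next I would compute the ordinary shuffle $z_mz_p^n\SH z_uz_p^v$ combinatorially, using the permutation description (1). A shuffle of the $y$'s records which word each successive $y$ comes from. I would encode the resulting sequence as alternating blocks of lengths $l_1,n_1,l_2,n_2,\dots$, with $\sum l_i=n+1$ and $\sum n_j=v+1$. There are four cases according to which word starts the sequence and which word ends it. These four cases give $C_1$ (starts with $w_1$, ends with $w_1$, so $q+1$ blocks $l$), $C_2$ (starts with $w_1$, ends with $w_2$), $C_3$ and $C_4$ (the symmetric cases starting with $w_2$). Given a block pattern, the shuffled word is $x^{\a_1}y\cdots x^{\a_{n+v+2}}y$, with exponents summing to $(n+v)(p-1)+m+u-2$. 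At a block boundary, the $x$'s preceding a $y$ are a shuffle of the remaining $x$'s from the two words, which yields a binomial $\binom{\a_i}{\beta_i}$ (or $\binom{\a_i}{\gamma_i}$). Here $\beta_i$ is the leftover $x$-count of the other word, given precisely by the stated cumulative formula. Inside a block, the $x$'s between consecutive $y$'s of the same word number $\binom{\a_i}{p-1}$. Once the shorter word is exhausted, the tail is forced, which gives the Kronecker deltas $\delta_{\a_j,p-1}$ beyond index $L_q+v+2$ and its analogues. Summing over the $\a$'s and the block structures then reproduces the coefficient of $\z^t(\a_1+1,\dots)$.

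Finally, in each of the four cases I would identify the $t$-correction. The exhausted word's last $y$ sits at position $n+v+2-l_{q+1}$, $n+v+2-n_q$, $n+v+2-n_{q+1}$ or $n+v+2-l_q$ respectively. The correction therefore merges positions $r$ and $r+1$ at exactly the indices written in the theorem, and it carries the same coefficient $C_i$. Applying $Z^t$ then gives the theorem.

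The main obstacle is the first step: showing that the $\delta$-terms of rule (2) contribute exactly once per shuffle and only at that boundary position. When unfolding the recursion, the $\delta$-terms arising at intermediate stages could, in principle, produce further merges. I would handle this by showing that after a merge the produced $x$ is prepended to a nonempty remainder, so no second $\delta$ term can fire. I would verify this carefully on small cases such as $n=v=0$, where the result should reduce to the Euler formula minus $t$ times the merged depth-one terms.
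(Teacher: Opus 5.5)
Your plan is the paper's proof. The paper likewise writes the $t$-shuffle as the ordinary shuffle plus, for each interleaving, the word obtained by replacing the last $y$ of the word that runs out first by $-tx$, with the same coefficient. Its cases (i)--(iv) are your four block patterns, with the merge at positions $n+v+2-l_{q+1}$, $n+v+2-n_q$, $n+v+2-n_{q+1}$, $n+v+2-l_q$. It takes the shuffle coefficients from Li--Qin, specializes $a_1=m-1$, $b_1=u-1$ and all other exponents to $p-1$, and applies $Z^t$. Your induction on $|w_1|+|w_2|$ and your rederivation of the block binomials only make rigorous what the paper asserts or cites.

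One step will not check out as you state it: the claim that every boundary binomial is ``given precisely by the stated cumulative formula''.
\begin{itemize}
\item At the first position ($j=0$, $L_0=N_0=0$) the correct values are $\beta_1=a_1=m-1$ and $\gamma_1=b_1=u-1$, since the first $y$ is preceded by the whole first $x$-run of its own word.
\item The displayed formula instead gives $\beta_1=\gamma_1=m+u-p-1$. It is correct only for $j\ge 1$, when both $a_1$ and $b_1$ have already been absorbed into the cumulative sum.
\item Your own $n=v=0$ test exposes this. There the printed right-hand side depends on $p$ while $\zeta^t(m)\zeta^t(u)$ does not. For $m=u=2$, $p=1$ it yields $2\zeta^t(3,1)-2t\zeta^t(4)$ instead of $2\zeta^t(2,2)+4\zeta^t(3,1)-6t\zeta^t(4)$.
\end{itemize}
So your argument, exactly like the paper's, proves the theorem with $\beta_1$ and $\gamma_1$ corrected. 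The defect is a slip in specializing Theorem~\ref{th4}, not in the method.

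Two smaller corrections to your heuristics:
\begin{itemize}
\item The $\delta$-term fires whenever one remainder is the single letter $y$ and the other remainder is nonempty, whatever its first letter. It replaces that single $y$ by $tx$; nothing replaces $yy$.
\item Since $\rho(a)bw_2$ is a plain concatenation that is never shuffled further, a second $\delta$-term can never arise. The real content of your ``main obstacle'' is a bijection. On one side are the $\delta$-firings at states $(|w_1|-1,j)$ with $j<|w_2|$, each counted with its prefix interleavings. On the other are the interleavings in which $w_1$ runs out first.
\end{itemize}
Your final description of $\Phi$ is correct.
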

\begin{theorem}
	\label{Tm1} 
	For any positive integers $a, b, r, s$, we have
	\begin{align*}
		&{\z}^t(a+1,\underbrace{1, \dots , 1}_{r-1}) {\z}^t(b+1,\underbrace{1, \dots , 1}_{s-1}) \nonumber \\
		&= \sum_{\substack{{\a}_1+ \dots + {\a}_{r+s}=a+b;\\ {\a}_1, \dots , {\a}_{r+s} \geq 0}} \bigg\{\sum_{l=1}^{r} \binom{{\a}_1}{a} \binom{r+s-l-1}{r-l} \prod_{j=l+2}^{r+s}{ \delta}_{{\a}_j,0}  + \sum_{l=1}^{s} \binom{{\a}_1}{b} \binom{r+s-l-1}{s-l} \nonumber \\
		& \ \ \ \ \prod_{j=l+2}^{r+s}{ \delta}_{{\a}_j,0} \bigg\} {\z}^t({\a}_1+1, \dots , {\a}_{r+s}+1) -t \bigg[ \sum_{l=1}^{r-1} \sum_{\substack{{\a}_1+ \dots + {\a}_{l+1}=a+b;\\ {\a}_1, \dots , {\a}_{l+1} \geq 0}} \sum_{i=\text{min}\{r-l, s-1\}-1}^{r+s-l-3} \binom{{\a}_1}{a} \nonumber \\
		& \ \ \ \times \bigg \{\binom{i}{r-l-1} + \binom{i}{s-2} \bigg\}  +  \sum_{l=1}^{s-1} \sum_{\substack{{\a}_1+ \dots + {\a}_{l+1}=a+b;\\ {\a}_1, \dots , {\a}_{l+1} \geq 0}} \sum_{i=\text{min}\{r-1, s-l\}-1}^{r+s-l-3} \binom{{\a}_1}{b} \bigg \{\binom{i}{s-l-1} \nonumber \\
		& \ \ \ + \binom{i}{r-2} \bigg\}\bigg]  {\z}^t({\a}_1+1, \dots , {\a}_{l+1}+1, {\{1\}}^i,2, {\{1\}}^{r+s-l-i-3})    \nonumber \\
	\end{align*}
\begin{align}
\label{eq39}
		& \ \ -t \bigg[{\delta}_{s,1}\sum_{l=1}^{r-1} \sum_{\substack{{\a}_1+ \dots + {\a}_{l+1}=a+b;\\ {\a}_1, \dots , {\a}_{l+1} \geq 0}} \times \binom{{\a}_1}{a} {\z}^t({\a}_1+1, \dots , {\a}_{l}+1,{\a}_{l+1}+2, {\{1\}}^{r-l-1}) \nonumber \\ 
		  &+{\delta}_{r,1}\sum_{l=1}^{s-1} \sum_{\substack{{\a}_1+ \dots + {\a}_{l+1}=a+b;\\ {\a}_1, \dots , {\a}_{l+1} \geq 0}} \binom{{\a}_1}{b} {\z}^t({\a}_1+1, \dots , {\a}_{l}+1,{\a}_{l+1}+2, {\{1\}}^{s-l-1}) \nonumber \\
		&     +  \sum_{\substack{{\a}_1+ \dots + {\a}_{r+1}=a+b;\\ {\a}_1, \dots , {\a}_{r+1} \geq 0}} \binom{{\a}_1}{a}  {\z}^t({\a}_1+1, \dots , {\a}_{r-1}+1, {\a}_{r}+{\a}_{r+1} +2, {\{1\}}^{s-1}) \nonumber \\
		&    + \sum_{\substack{{\a}_1+ \dots + {\a}_{s+1}=a+b;\\ {\a}_1, \dots , {\a}_{s+1} \geq 0}} \binom{{\a}_1}{b} {\z}^t({\a}_1+1, \dots , {\a}_{s-1}+1,{\a}_{s}+{\a}_{s+1}+2, {\{1\}}^{r-1}) \bigg].
	\end{align}
	
\end{theorem}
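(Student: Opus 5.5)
Write $w_1=x^a y\,y^{r-1}=x^a y^r$ and $w_2=x^b y^s$. Since $Z^t$ is an algebra homomorphism on $(\hh_t^0,\S)$, it is enough to compute $x^a y^r \S x^b y^s$ in $\hh_t$ and then apply $Z^t$. The natural tool is a decomposition of the $t$-shuffle as the ordinary shuffle plus correction terms.

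From rule (2), the extra terms $-\delta(w_1)\rho(a)bw_2-\delta(w_2)\rho(b)aw_1$ are nonzero only when one factor has been reduced to a single letter $y$. In that case $\rho(y)=tx$ replaces the final $y$ of that factor by an $x$ and deletes it as a separate depth slot. This joins two consecutive arguments into one argument increased by one, i.e.\ a ``$\a+\a'+2$'' entry. I would prove by induction on $|w_1|+|w_2|$ the identity
\[
u\,y\S v\,y=(u y\SH v y)-t\,\bigl((u\SH v)\,x y\bigr)+(\text{terms from lower-level }t\text{-corrections}),
\]
and then unwind it. The point of the unwinding is that every $t$-correction arises at the stage where one of the two words is exhausted down to its last $y$.

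The $t^0$ part comes from the combinatorial description (1) of $x^a y^r\SH x^b y^s$. The first letter must be an $x$ taken from one of the two words; I split according to which word supplies the $y$ that ends the first block. Counting shuffles of $x^a$ and $x^b$ ahead of that $y$ gives $\binom{\a_1}{a}$ or $\binom{\a_1}{b}$. After the first $y$ of one word is placed, the remaining $x$'s of the other word must all sit in the next block, which produces $\a_2$. The remaining letters are $y$'s, so $\a_j=0$ for $j\ge l+2$, and placing the $y$'s gives the factor $\binom{r+s-l-1}{r-l}$. This reproduces the first line of the statement, exactly as in the Euler decomposition (\ref{equn1}).

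For the $t^1$ terms I would instead use the recursive rule (2), peeling letters from the left. A correction occurs when one word has been reduced to $y$ while the other is $x^{c}y^{k}$ or $y^k$. The result is the previous prefix, followed by $x$ times the remaining letters of the other word, with the final $y$ of the exhausted word absorbed. I expect no $t^2$ terms, since after a correction one word is $1$.

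Then I classify by the remaining word of the other factor:
\begin{itemize}
\item If it still contains $x$'s, the correction merges two blocks into $\a_r+\a_{r+1}+2$ (or the $s$-analogue), followed by $\{1\}^{s-1}$. This gives the last two sums.
\item If one word is just $x^a y$ or $x^b y$, i.e.\ $s=1$ or $r=1$, we get the $\delta_{s,1}$ and $\delta_{r,1}$ terms.
\item Otherwise the other word is a pure $y^k$, and the $x$ inserted by $\rho$ produces a $2$ inside a string of $1$'s, at position $i$.
\end{itemize}

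The main obstacle is the count for the third case: the coefficient of $\z^t(\a_1+1,\dots,\a_{l+1}+1,\{1\}^i,2,\{1\}^{r+s-l-i-3})$. The reduced pair at the moment of correction is $(y,\,y^{k})$, reached from $(y^{r-l},y^{s-1})$ or the symmetric pair. The number of ways to interleave the preceding $y$'s is a ballot-type binomial, $\binom{i}{r-l-1}$ or $\binom{i}{s-2}$ according to which word is exhausted first, and the lower limit $\min\{r-l,s-1\}-1$ of the $i$-sum reflects the requirement that one word be reduced to a single $y$. I would verify this count carefully on small cases, e.g.\ $r=s=2$, against direct expansion of rule (2). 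Finally I would apply $Z^t$; all words begin with $x$ since $a,b\ge1$, so they lie in $\hh_t^0$.
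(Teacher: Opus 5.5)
Your plan is essentially the paper's proof (Proposition~\ref{j1} followed by applying $Z^t$). You take the $t^0$ part from the combinatorial description of $x^ay^r\SH x^by^s$. Each $t$-term is a single shuffle word in which the last $y$ of one factor, placed while the other factor is still nonempty, is replaced by $-tx$. The interior-$2$ terms come from the $y^m\S y^n$ count of Lemma~\ref{l1} with $(m,n)=(r-l,s-1)$ or $(r-1,s-l)$, which gives exactly your $\binom{i}{r-l-1}$ and $\binom{i}{s-2}$.

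Two descriptions should be tightened. First, in the $t^0$ count the leftover $x$'s of the other factor are spread over $\a_2,\dots,\a_{l+1}$, not placed in $\a_2$ alone. Second, the right split of the $t$-terms is whether the other factor's first $y$ has already been placed. If it has not, the remainder is $x^cy^s$ with $c\ge 0$ (including $c=0$), which gives the last two sums. If it has, all $x$'s are already placed and you get the interior-$2$ terms; the $\delta_{s,1}$ and $\delta_{r,1}$ terms are the case where the exhausted factor has only one $y$.
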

	\begin{theorem}
		\label{Tm2}
	For any positive integers $m, n, j, k$, we have
	\begin{align}
		\label{eq430}
		&{\z}^t(m+1,\underbrace{1, \dots , 1}_{j-1}) {\z}^t(n+1,\underbrace{1, \dots , 1}_{k-1}) \nonumber \\
		&=\sum_{0 \leq  n_1 \leq n} \binom{m+n_1-1}{m-1} \sum_{\substack{m_1+m_2=j\\ m_i \geq 0}}\binom{m_2+k-1}{k-1}  \sum_{\substack{a_1+ \dots + a_{m_1+1}=n-n_1\\ a_i \geq 0}} \z^t(a_1+m+n_1+1, a_2+1,  \nonumber \\
		& \ \ \  \dots , a_{m_1+1}+1,\{1\}^{m_2+k-1}) + \sum_{ 1 \leq k_1 \leq k}\sum_{\substack{m_1+m_2=m-1\\ m_i \geq 0}} \binom{m_1+n-1}{n-1}\sum_{\substack{b_1+ \dots + b_{k_1+1}=m_2\\ b_i \geq 0}}\binom{j+k-k_1}{j}
		\nonumber \\
		& \ \ \ \times   \z^t(b_1+n+m_1+1, b_2+1, \dots , b_{k_1}+1,b_{m_1+1}+2, \{1\}^{j+k-k_1-1}) \nonumber \\
		& -t \sum_{0 \leq  n_1 \leq n} \binom{m+n_1-1}{m-1}  \sum_{\substack{m_1+m_2=j\\ m_i \geq 0}}\sum_{\substack{a_1+ \dots + a_{m_1+1}=n-n_1\\ a_i \geq 0}} \sum_{i=min \{m_2, k-1\}-1}^{m_2+k-3} \bigg\{ \binom{i}{m_2-1} 
		\nonumber \\
		& \ \  \ +  \binom{i}{k-2}\bigg\} \times \z^t(a_1+m+n_1+1, a_2+1, \dots , a_{m_1+1}+1, \{1\}^{i}, 2, \{1\}^{m_2+k-i-3} )  \nonumber \\
		& -t\sum_{0 \leq  n_1 \leq n} \binom{m+n_1-1}{m-1}  \sum_{\substack{j_1+j_2=n-n_1\\ j_i \geq 0}} \sum_{\substack{a_1+ \dots + a_{j}=j_1\\ a_i \geq 0}}
		\z^t(a_1+m+n_1+1, a_2+1, \dots ,  \nonumber \\
		& \ \ \  a_{j-1}+1, a_j+j_2+2, \{1\}^{k-1} ) -t \sum_{ 1 \leq k_1 \leq k}\sum_{\substack{m_1+m_2=m-1\\ m_i \geq 0}} \binom{m_1+n-1}{n-1} \sum_{i=min \{j,   k-k_1\}-1}^{j+k-k_1-2} \nonumber \\
		& \ \ \  \bigg\{ \binom{i}{j-1}+\binom{i}{k-k_1-1}\bigg\}  \sum_{\substack{a_1+ \dots + a_{k_1+1}=m_2\\ a_l \geq 0}}\z^t(a_1+n+m_1+1,  \nonumber \\
		& \ \ \  a_2+1, \dots , a_{k_1}+1, a_{k_1+1}+2, \{1\}^{i-1}, 2, \{1\}^{j+k-k_1-i-2} )
		\nonumber \\
		& -t \sum_{\substack{m_1+m_2+m_3=m-1\\ m_i \geq 0}} \binom{m_1+n-1}{n-1}\sum_{\substack{a_1+ \dots + a_{k}=m_2\\ a_l \geq 0}}
		\z^t(a_1+n+m_1+1, a_2+1, \dots, a_{k-1}+1, \nonumber \\
		& \ \ \ a_{k}+m_3+3,  \{1\}^{j-1} ).
	\end{align}
\end{theorem}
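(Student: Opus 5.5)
We prove Theorem~\ref{Tm2} by induction using the recursive rule (2) for the $t$-shuffle product, instead of the combinatorial description that underlies Theorem~\ref{Tm1}.

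\textbf{Setup.} In $\hh_t^0$ the two factors are the words $x^m y^j$ and $x^n y^k$, since $\z^t(m+1,\{1\}^{j-1})=Z^t(x^m y^j)$. Because $Z^t$ is an algebra homomorphism for $\S$, it suffices to prove the word identity
\[
x^m y^j \S x^n y^k = (\text{the linear combination of words on the right of (\ref{eq430})})
\]
and then apply $Z^t$.

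\textbf{Step 1: strip the leading $x$'s.} Apply rule (2) repeatedly with the leading letters both equal to $x$. Since $\rho(x)=0$, no correction terms arise until one side is reduced to a pure power of $y$. Unwinding this recursion, as in the classical proof of (\ref{equn1}), gives two families of terms:
\[
x^m y^j \S x^n y^k=\sum_{0\le n_1\le n}\binom{m+n_1-1}{m-1}x^{m+n_1}\bigl(y^j\S x^{n-n_1}y^k\bigr)+\sum_{m_1+m_2=m-1}\binom{m_1+n-1}{n-1}x^{n+m_1+1}\bigl(x^{m_2}y^j\S y^k\bigr),
\]
up to the exact bookkeeping of which factor has been exhausted. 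The binomials $\binom{m+n_1-1}{m-1}$ and $\binom{m_1+n-1}{n-1}$ here are exactly the outer coefficients in (\ref{eq430}).

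\textbf{Step 2: the inner products.} We need closed forms for $y^j\S x^{c}y^k$. First record two auxiliary lemmas, each proved by induction on the total length:
\begin{itemize}
\item[(a)] $y^a\S y^b$ equals $\binom{a+b}{a}y^{a+b}$ minus $t$ times a sum of words $y^{i}xy^{a+b-i-2}$. The coefficient of each such word is a sum of two binomials, $\binom{i}{a-1}+\binom{i}{b-2}$ or a shifted variant, and $i$ ranges from $\min\{a,b\}-1$ upward.
\item[(b)] $y^j \S x^c y^k$. Repeatedly apply rule (2) with $a=y$. The $y$'s of $y^j$ either interleave with the $x^c$ block, producing the compositions $a_1+\dots+a_{m_1+1}=n-n_1$ of the $x$'s, or land after it, producing $y^{m_2}\S y^k$, which (a) evaluates. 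The term $-\delta(w_1)\rho(y)bw_2=-t\,x\,x^{c}y^k$ appears exactly when the last $y$ of $y^j$ meets the $x$-block. This gives the term with $a_j+j_2+2$ and the block $\{1\}^{k-1}$.
\end{itemize}

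\textbf{Step 3: assemble.} Substitute (a) and (b) into Step 1 and translate the words back to indices via $z_k=x^{k-1}y$. The $t^0$ parts give the first two sums of (\ref{eq430}). The $t$-corrections from (a) give the terms carrying $\binom{i}{\cdot}+\binom{i}{\cdot}$. The boundary correction $-tx$ in the second family, where the $x^{m_2}$ block is split as $m_2+m_3$, gives the final term with $a_k+m_3+3$.

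\textbf{Main obstacle.} The hardest step is lemma (a) together with its interaction with (b): identifying the exact summation range $i\ge\min\{\cdot,\cdot\}-1$ and the pair of binomial coefficients. My plan is to prove (a) by double induction on $(a,b)$, using Pascal's rule. The two binomials should be traced to the two possible orders in which the final $-tx$ correction occurs, according to which factor is exhausted first.
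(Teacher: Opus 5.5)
There is a genuine gap in Step~1. Essentially all of the content of the theorem sits in that step, so ``up to the exact bookkeeping'' hides an error rather than a formality. The displayed decomposition is false. For $m=n=j=k=1$ and $t=0$ the left side is $xy\SH xy=4x^2y^2+2xyxy$, while your right side is $x(y\SH xy)+x^2(y\SH y)+x^2(y\SH y)=xyxy+6x^2y^2$.

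Two things go wrong. First, the configuration $x^{m+n}(y^j\S y^k)$ appears in both families. Second, the second family has the wrong shape. Stripping leading $x$'s until one factor starts with $y$ actually ends in two kinds of states:
\begin{align*}
&\binom{m+n_1-1}{m-1}x^{m+n_1}(y^j\S x^{n-n_1}y^k), \quad 0\le n_1\le n-1,\\
&\binom{n+m_1-1}{n-1}x^{n+m_1}(x^{m-m_1}y^j\S y^k), \quad 0\le m_1\le m-1.
\end{align*}
In the second kind the surviving first factor still begins with $x$. Your form $x^{n+m_1+1}(\cdots)$ puts an extra $x$ at the very front. In the theorem's second and last sums, that $x$ is instead the last $x$ of $x^my^j$, and it may be preceded by $y$'s of $x^ny^k$; the words are $x^{b_1+n+m_1}y\cdots x^{b_{k_1}}y\,x^{b_{k_1+1}+1}y^{j+k-k_1}$ and $x^{a_1+n+m_1}y\cdots x^{a_{k-1}}y\,x^{a_k+m_3+2}y^j$. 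So Step~3 cannot produce those sums.

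The paper's decomposition is different. Lemma~\ref{lemma4} cuts the first word at a fixed letter, and the part before that letter is an \emph{ordinary} shuffle. This works because the $t$-correction only ever replaces the last letter $y$ of a factor by $-tx$, and only when that letter is not in final position. Applied at the last $x$ of $x^my^j$, the lemma gives directly
\begin{align*}
x^my^j\S x^ny^k&=\sum_{n_1=0}^{n}(x^{m-1}\SH x^{n_1})\,x\,(y^j\S x^{n-n_1}y^k)+\sum_{k_1=1}^{k}(x^{m-1}\SH x^ny^{k_1})\,x\,(y^j\S y^{k-k_1})\\
&\quad-t\,(x^{m-1}\SH x^ny^{k-1}x)\,xy^j .
\end{align*}
Its three pieces are the first family, the second family, and the $a_k+m_3+3$ term. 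Your corrected stripping reaches the same point if you do two things. Cut $x^{m-m_1}y^j\S y^k$ at its last $x$. Then fold the piece with no $y$ before that $x$ into the $n_1=n$ term, using $\sum_{m_1=0}^{m-1}\binom{n+m_1-1}{n-1}=\binom{m+n-1}{m-1}$.

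Step~2 also needs repair.
\begin{itemize}
\item In (a), the $t$-words of $y^a\S y^b$ have length $a+b$. The correct statement is Lemma~\ref{l1}:
\[
y^a\S y^b=\binom{a+b}{a}y^{a+b}-t\sum_{i=\min\{a,b\}-1}^{a+b-2}\bigl\{\tbinom{i}{a-1}+\tbinom{i}{b-1}\bigr\}y^ixy^{a+b-i-1}.
\]
It follows by a one-line count from the structural fact above: if the last letter of $y^a$ is in position $i+1<a+b$, the first $i$ positions hold the other $a-1$ letters of $y^a$. So (a) is not the real obstacle.
\item The pair $\binom{i}{m_2-1}+\binom{i}{k-2}$ in the statement comes from $y\,(y^{m_2}\S y^{k-1})$, that is, from cutting $x^cy^k$ at its \emph{first} $y$. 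Your (b) cuts at the end of the $x$-block instead. With your cut the $t$-terms come out arranged differently, and you would still have to regroup them.
\end{itemize}

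Finally, be aware of the case $k=1$. There the first $y$ is also the last letter of $x^cy$, and the cut produces an extra correction $-t\sum_{m_1+m_2=j,\ m_2\ge 1}(x^c\SH y^{m_1})\,x\,y^{m_2}$ that \eqref{eq430} does not contain. For $m=n=j=k=1$ one has $xy\S xy=4x^2y^2+2xyxy-6t\,x^3y$. The $t$-part of the right side of \eqref{eq430}, however, is only $-4t\,\z^t(4)$, since the third and fifth sums involve only $i=-1$ there and contribute nothing. That side therefore equals $\z(2)^2+2t\z(4)$ rather than $\z(2)^2$. So for $k=1$ no argument can land on the display verbatim; that case needs the extra term.
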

The above two theorems of the height one case are equivalent but their appearances are different. The equivalence of these two theorems is remarked in section 3.  
\section{Proof of Theorem \ref{T101}}	
We first derive a general $t$-shuffle product formula (Theorem \ref{th4}), which generalizes the formula for shuffle product \cite[Theorem 2.1]{1}. Using it, we prove Theorem \ref{T101}. For positive integer $k$, consider $\{1\}^k=\underbrace{1, \dots, 1}_{k ~\text{times}}.$ 
	\begin{theorem}
		\label{th4}
		Let $r,s$ be two positive integers and let $a_1, \dots , a_r, b_1, \dots , b_s$ be nonnegative integers. Then we have 
		\begin{align}
			\label{equ34}
			& x^{a_1}y \dots x^{a_r}y \S x^{b_1}y \dots x^{b_s}y \nonumber \\
			&  = \sum_{\substack{{\a}_1+ \dots +{\a}_{r+s}=\sum_{i=1}^{r}a_i+\sum_{j=1}^{s}b_j \\ {\a}_1, \dots , {\a}_{r+s} \geq 0}}  \bigg[ {\sum}_1 \bigg\{ x^{{\a}_1}yx^{{\a}_2}y \dots x^{{\a}_{r+s}}y\nonumber \\
			& \ \ \  -t x^{{\a}_1}y \dots x^{{\a}_{r-l_{p+1}+s-1}}y x^{{\a}_{r-l_{p+1}+s}+{\a}_{r-l_{p+1}+s+1}+1}y x^{{\a}_{r-l_{p+1}+s+2}}y \dots x^{{\a}_{r+s}}y \bigg\} \nonumber \\
			& \ \ + {\sum}_2 \bigg\{ x^{{\a}_1}y \dots x^{{\a}_{r+s}}y -t x^{{\a}_1}y \dots x^{{\a}_{r+s-n_{p}-1}}y x^{{\a}_{r+s-n_{p}}+{\a}_{r+s-n_{p}+1}+1}y x^{{\a}_{r+s-n_{p}+2}}y \dots x^{{\a}_{r+s}}y \bigg\} \nonumber \\
			&  \ \ + {\sum}_3 \bigg\{ x^{{\a}_1}y \dots x^{{\a}_{r+s}}y   -t x^{{\a}_1}y \dots x^{{\a}_{s-n_{p+1}+r-1}}y x^{{\a}_{s-n_{p+1}+r}+{\a}_{s-n_{p+1}+r+1}+1}y x^{{\a}_{s-n_{p+1}+r+2}}y \dots x^{{\a}_{r+s}}y \bigg\} \nonumber \\
			& \ \  + {\sum}_4 \bigg\{ x^{{\a}_1}y \dots x^{{\a}_{r+s}}y -t x^{{\a}_1}y \dots x^{{\a}_{r+s-l_{p}-1}}y x^{{\a}_{r+s-l_{p}}+{\a}_{r+s-l_{p}+1}+1}y x^{{\a}_{r+s-l_{p}+2}}y \dots x^{{\a}_{r+s}}y \bigg\} \bigg], 
		\end{align} 
		where 
		\begin{align}
			\label{b1}
			& {\sum}_1= \sum_{\substack{l_1+ \dots + l_{p+1}=r\\ n_1+ \dots + n_p=s\\p \geq 1, l_i \geq 1, n_j \geq 1}} \prod_{i=1}^{L_p+s} \binom{{\a}_i}{{\beta}_i} \prod_{j=L_p+s+2}^{r+s} {\delta}_{{\a}_j,a_{j-s}}, 
		\end{align}
			\begin{align}
				\label{b2}
			& {\sum}_2= \sum_{\substack{l_1+ \dots + l_{p}=r\\ n_1+ \dots + n_p=s\\p \geq 1, l_i \geq 1, n_j \geq 1}} \prod_{i=1}^{r+N_{p-1}} \binom{{\a}_i}{{\beta}_i} \prod_{j=r+N_{p-1}+2}^{r+s} {\delta}_{{\a}_j,b_{j-r}},  
				\end{align}
			\begin{align}
				\label{b3}
			& {\sum}_3= \sum_{\substack{l_1+ \dots + l_{p}=r\\ n_1+ \dots + n_{p+1}=s\\p \geq 1, l_i \geq 1, n_j \geq 1}} \prod_{i=1}^{N_p+r} \binom{{\a}_i}{{\gamma}_i} \prod_{j=N_p+r+2}^{r+s} {\delta}_{{\a}_j,b_{j-r}}, 
				\end{align}
			\begin{align}
				\label{b4}
			& {\sum}_4= \sum_{\substack{l_1+ \dots + l_{p}=r\\ n_1+ \dots + n_p=s\\p \geq 1, l_i \geq 1, n_j \geq 1}} \prod_{i=1}^{s+L_{p-1}} \binom{{\a}_i}{{\gamma}_i} \prod_{j=s+L_{p-1}+2}^{r+s} {\delta}_{{\a}_j,a_{j-s}}.  
		\end{align}
			Here for positive integers $l_1, \dots , l_p (,l_{p+1})$ and $n_1, \dots , n_p (,n_{p+1})$ appearing in the summations above, are given by
		\begin{align}
			&\begin{aligned}
				\label{b5}
				{\beta}_{L_j+N_j+1} &=\sum_{i=1}^{L_j+1} a_i+ \sum_{i=1}^{N_j} b_i-\sum_{i=1}^{L_j+N_j} {\a}_i,\\
				{\beta}_{L_j+N_j+d} &=a_{L_j+d}, \hspace{1em} (2 \leq d \leq l_{j+1})
			\end{aligned}
			\hspace{4em}\text{for}~ j=0,1, \dots, p, \\
			&\begin{aligned}
				\label{b6}
				{\beta}_{L_{j+1}+N_j+1} &=\sum_{i=1}^{L_{j+1}} a_i+ \sum_{i=1}^{N_j+1} b_i-\sum_{i=1}^{L_{j+1}+N_j} {\a}_i,\\
				{\beta}_{L_{j+1}+N_j+d} &=b_{N_j+d}, \hspace{1em} (2 \leq d \leq n_{j+1})
			\end{aligned}
			\hspace{4em}\text{for}~ j=0,1, \dots, p-1, 
		\end{align}
		and 
		\begin{align}
			&\begin{aligned}
				\label{b7}
				{\gamma}_{L_j+N_j+1} &=\sum_{i=1}^{L_j} a_i+ \sum_{i=1}^{N_j+1} b_i-\sum_{i=1}^{L_j+N_j} {\a}_i,\\
				{\gamma}_{L_j+N_j+d} &=b_{N_j+d}, \hspace{1em} (2 \leq d \leq n_{j+1})\\
			\end{aligned}
			\hspace{4em}\text{for}~ j=0,1, \dots, p, \\
			&\begin{aligned}
				\label{b8}
				{\gamma}_{L_{j}+N_{j+1}+1} &=\sum_{i=1}^{L_{j}+1} a_i+ \sum_{i=1}^{N_{j+1}} b_i-\sum_{i=1}^{N_{j+1}+L_j} {\a}_i,\\
				{\gamma}_{L_{j}+N_{j+1}+d} &=a_{L_j+d}, \hspace{1em} (2 \leq d \leq l_{j+1})
			\end{aligned}
			\hspace{4em}\text{for}~ j=0,1, \dots, p-1, \\ \nonumber
		\end{align}
		with $L_j=l_1+l_2+ \dots +l_j,$~ $N_j= n_1+ \dots + n_j$ for $j \geq 0$ and $ L_0=N_0=0 $, and $\delta$ is Kronecker's delta symbol.
		
	\end{theorem}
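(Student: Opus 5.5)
The plan is to reduce Theorem \ref{th4} to the known shuffle formula \cite[Theorem 2.1]{1}, which is exactly the $t=0$ case. The first step is to compare the recursive rules for $\S$ and $\SH$ letter by letter. The correction terms $-\delta(w_1)\rho(a)bw_2-\delta(w_2)\rho(b)aw_1$ can only fire when one factor has been reduced to a single letter. Since $\rho(x)=0$, only a lone letter $y$ contributes, and it contributes $-t\,x\,w$, where $w$ is the remaining word of the other factor. So I expect a closed description: $u\S v$ equals $u\SH v$ minus $t$ times those terms in which the last $y$ of one factor, instead of being inserted, is replaced by an $x$ placed immediately in front of the remaining tail of the other factor.

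I will establish this by induction on $r+s$, splitting on the first letters $a$ and $b$ of $u$ and $v$. Concretely, I would prove
\[
u\S v = u\SH v - t\bigl(\,u' \SH_{\rm last} v + u\SH_{\rm last}' v\,\bigr),
\]
where the right side is written for words ending in $y$. The operations $u'\SH_{\rm last}v$ and $u\SH'_{\rm last}v$ range over the shuffles in which the final $y$ of $u$ (respectively $v$) is the last letter of that factor to be placed, and it is turned into $x$ when it sits immediately before the last $y$ of the other word. The inductive step follows because both sides satisfy the same recursion.

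Next, I will translate this into the exponent language $x^{\a_1}y\cdots x^{\a_{r+s}}y$. The $\SH$ part is exactly \cite[Theorem 2.1]{1}, and it produces the four families ${\sum}_1,\dots,{\sum}_4$. These families are indexed by the way the $y$'s of the two words interleave into alternating blocks $l_1,n_1,l_2,\dots$. The four cases are determined by whether the first block comes from the $a$-word or the $b$-word, and whether the last block does; the binomials $\binom{\a_i}{\beta_i}$ and $\binom{\a_i}{\gamma_i}$ count the distributions of the $x$'s. The four cases are:
\begin{enumerate}
\item ${\sum}_1$: the $a$-word both starts and ends.
\item ${\sum}_2$: the $a$-word starts and the $b$-word ends.
\item ${\sum}_3$: the $b$-word both starts and ends.
\item ${\sum}_4$: the $b$-word starts and the $a$-word ends.
\end{enumerate}
In each configuration the tail is a fixed copy of the remainder of the final block, which explains the Kronecker deltas $\delta_{\a_j,a_{j-s}}$ or $\delta_{\a_j,b_{j-r}}$. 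The $-t$ correction in each configuration concerns the $y$ that ends the penultimate block, namely the last $y$ of the word that is exhausted first. In the correction term, this $y$ becomes an $x$ that merges the two adjacent exponents into $\a_{k}+\a_{k+1}+1$. Here $k$ is the position just before the final block: $r+s-l_{p+1}$ in case 1, $r+s-n_p$ in case 2, $r+s-n_{p+1}$ in case 3 and $r+s-l_p$ in case 4. This is exactly the shape of \eqref{equ34}.

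I expect the main obstacle to be verifying that the coefficient of each merged word is precisely the same binomial-delta weight as the unmerged word indexed by the same $(\a_1,\dots,\a_{r+s})$, rather than some other reindexed coefficient. To handle this, I will set up a bijection between the shuffles with a correction and the shuffles in which that $y$ is immediately followed by the final block. Under this bijection, the weights $\beta_i$ and $\gamma_i$ with $i\le L_p+s$ (and the analogous ranges in the other cases) are unaffected by the merge. I will also check the boundary cases $p=1$ and blocks of length one separately.
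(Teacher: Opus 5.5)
Your proposal follows the paper's proof. You take the Li--Qin shuffle coefficients ${\sum}_1,\dots,{\sum}_4$ for the four block configurations and add, with the same coefficient, the $-t$ term obtained by turning the last $y$ of the first-exhausted word into $x$; this is the $y$ at position $r+s-l_{p+1}$, $r+s-n_p$, $r+s-n_{p+1}$, $r+s-l_p$ respectively. Once the correction is attached shuffle by shuffle, the coefficient-matching ``obstacle'' you anticipate disappears.

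Your inductive check of this combinatorial description improves on the paper, which only asserts it, but two details need fixing:
\begin{itemize}
\item Induct on the total length of the two words, not on $r+s$, because stripping a leading $x$ does not lower $r+s$.
\item Drop the phrase ``immediately before the last $y$ of the other word''. The correction applies whenever that word is exhausted first, however much of the other word remains. For example, $y$ against $y^2$ must give $-t(xy^2+2yxy)$, and the $xy^2$ term comes from a $y$ that is not adjacent to the other word's last $y$.
\end{itemize}
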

	
	\begin{proof}
		We follow the techniques used to obtain the shuffle product formula by Z. Li and C. Qin in \cite{1}. If we take $y$ in $ x^{a_1}y \dots x^{a_r}y $ as $y_1$ and the $y$ in $x^{b_1}y \dots x^{b_s}y$ as $y_2$, then there are four cases for the positions of $y_1$'s and $y_2$'s when we do $t$-shuffle product:
	\begin{align*}
			&(i): \underbrace{y_1 \dots y_1}_{l_1}\underbrace{y_2 \dots y_2}_{n_1} \dots \underbrace{y_1 \dots y_1}_{l_p} \underbrace{y_2 \dots y_2}_{n_p-1} (y_2-tx)\underbrace{y_1 \dots y_1}_{l_{p+1}} \\
			& \ \ \ \ 	\text{	where}~ l_1+\dots + l_{p+1}=r, n_1+\dots + n_{p}=s ~\text{	with}~ p,l_i,n_j \geq 1;\\
			& (ii): \underbrace{y_1 \dots y_1}_{l_1}\underbrace{y_2 \dots y_2}_{n_1} \dots \underbrace{y_1 \dots y_1}_{l_p-1} (y_1-tx) \underbrace{y_2 \dots y_2}_{n_p} \\
			&\ \ \ \ 	\text{	where}~ l_1+\dots + l_{p}=r, n_1+\dots + n_{p}=s ~\text{	with}~ p,l_i,n_j \geq 1;\\
			&(iii): \underbrace{y_2 \dots y_2}_{n_1}\underbrace{y_1 \dots y_1}_{l_1} \dots \underbrace{y_2 \dots y_2}_{n_p}\underbrace{y_1 \dots y_1}_{l_{p}-1} (y_1-tx)\underbrace{y_2 \dots y_2}_{n_{p+1}}  \\
			&\ \ \ \ 	\text{	where}~ l_1+\dots + l_{p}=r, n_1+\dots + n_{p+1}=s ~\text{	with}~ p,l_i,n_j \geq 1;\\
			&(iv): \underbrace{y_2 \dots y_2}_{n_1}\underbrace{y_1 \dots y_1}_{l_1} \dots \underbrace{y_2 \dots y_2}_{n_p-1} (y_2-tx)\underbrace{y_1 \dots y_1}_{l_{p}}  \\
			&\ \ \ \ 	\text{	where}~ l_1+\dots + l_{p}=r, n_1+\dots + n_{p}=s ~\text{	with}~ p,l_i,n_j \geq 1;
		\end{align*} 
Thus the $t$-shuffle product gives all the terms of the shuffle product formula from \cite{1} and additionally new terms involving t. In case $(i)$, it can be easily seen that, replacing $s^{th}$~$y_2$, by $-tx$ i.e., $(r+s-l_{p+1})^{th}$ $y$, by $-tx$ with the same coefficient as the corresponding words from the shuffle product, we get the new words. 
Therefore in this case new terms are given by
\begin{align*}
&-t\sum_{\substack{{\a}_1+ \dots +{\a}_{r+s}=\sum_{i=1}^{r}a_i+\sum_{j=1}^{s}b_j \\ {\a}_1, \dots , {\a}_{r+s} \geq 0}}  {\sum}_1   x^{{\a}_1}y \dots x^{{\a}_{r-l_{p+1}+s-1}}y x^{{\a}_{r-l_{p+1}+s}+{\a}_{r-l_{p+1}+s+1}+1}y x^{{\a}_{r-l_{p+1}+s+2}}y \dots x^{{\a}_{r+s}}y.
\end{align*}
For case $(ii)$, $t$-shuffle product will have new words by replacing $r^{th}$~$y_1$, with $-tx$ i.e., $(r+s-n_p)^{th}$ $y$, by $-tx$ with the same coefficient as the corresponding words in the shuffle product, and in this case, new terms are given by
\begin{align*}
	&-t\sum_{\substack{{\a}_1+ \dots +{\a}_{r+s}=\sum_{i=1}^{r}a_i+\sum_{j=1}^{s}b_j \\ {\a}_1, \dots , {\a}_{r+s} \geq 0}}  {\sum}_2 x^{{\a}_1}y \dots x^{{\a}_{r+s-n_{p}-1}}y x^{{\a}_{r+s-n_{p}}+{\a}_{r+s-n_{p}+1}+1}y x^{{\a}_{r+s-n_{p}+2}}y \dots x^{{\a}_{r+s}}y. 
\end{align*}
Now for case $(iii)$, $t$-shuffle product will have new words by replacing $r^{th}$~$y_1$, with $-tx$ i.e., $(r+s-n_{p+1})^{th}$ $y$, by $-tx$ with the same coefficient as shuffle product, and new terms are given by
\begin{align*}
&-t\sum_{\substack{{\a}_1+ \dots +{\a}_{r+s}=\sum_{i=1}^{r}a_i+\sum_{j=1}^{s}b_j \\ {\a}_1, \dots , {\a}_{r+s} \geq 0}}  {\sum}_3 x^{{\a}_1}y \dots x^{{\a}_{s-n_{p+1}+r-1}}y x^{{\a}_{s-n_{p+1}+r}+{\a}_{s-n_{p+1}+r+1}+1}y x^{{\a}_{s-n_{p+1}+r+2}}y \dots x^{{\a}_{r+s}}y.
\end{align*}
Similarly, for case $(iv)$, we have new words by replacing $s^{th}$~$y_2$, with $-tx$ i.e., $(r+s-l_{p})^{th}$ $y$, by $-tx$ with the same coefficient as shuffle product, and new terms are given by
\begin{align*}
	&-t\sum_{\substack{{\a}_1+ \dots +{\a}_{r+s}=\sum_{i=1}^{r}a_i+\sum_{j=1}^{s}b_j \\ {\a}_1, \dots , {\a}_{r+s} \geq 0}}  {\sum}_4x^{{\a}_1}y \dots x^{{\a}_{r+s-l_{p}-1}}y x^{{\a}_{r+s-l_{p}}+{\a}_{r+s-l_{p}+1}+1}y x^{{\a}_{r+s-l_{p}+2}}y \dots x^{{\a}_{r+s}}y.
\end{align*}	
These coefficients ${\sum}_1, {\sum}_2, {\sum}_3,$ and ${\sum}_4$ are obtained in \cite[Eq.(2.3)]{1}, and given by \eqref{b1}, \eqref{b2}, \eqref{b3}, and \eqref{b4}, respectively, with the same notations $\beta_i, \gamma_i$ given by \eqref{b5}, \eqref{b6} , \eqref{b7}, and \eqref{b8} .
	\end{proof}
Now after taking  $r=n+1, s=v+1,a_1=m-1, b_1=u-1, a_2, \dots , a_{n+1}= b_2, \dots , b_{v+1}=p-1$ in Theorem \ref{th4}, if we apply the $\mathbb{Q}[t]$-linear map $Z^t$ on both sides of the resulting equation, then we get generalized Euler decomposition formula \eqref{Equn101}, that is, Theorem $1$. In Eq. \eqref{Equn101}, if we take $t=0$, then we get
\begin{align}
	\label{equn9}  
	&\z (m,\underbrace{p, \dots, p}_n) \z (u,\underbrace{p, \dots, p}_v)\nonumber \\
	&  = \sum_{\substack{{\a}_1+ \dots +{\a}_{n+v+2}=\\(n+v)(p-1)+(m+u)-2\\ {\a}_1, \dots , {\a}_{n+v+2} \geq 0}}  \bigg[ {C_1}+{C_2}+{C_3}+{C_4}\bigg] \times   \z({\a}_1+1, \dots, {\a}_{n+v+2}+1),
\end{align}
where $C_1, C_2,C_3, $ and $C_4$ have the same meaning as used in \eqref{Equn101}.
Here if we take $n=v=0$, then following the same procedure of deducing the Euler decomposition formula from \cite[Proposition 2.7]{1}, one can get the Euler decomposition formula \eqref{equn1}, from \eqref{equn9}.
	\section{Generalized Euler decomposition formula for IMZV of height one}    	
	As is already said, we derive generalized Euler decomposition formula of height one case through two different approaches. For the first method, we need to prove Lemma \ref{l1}.

	\begin{lemma}
		\label{l1}
		For positive integers $m,n$, we have
		\begin{align}
			\label{equn31}
			&y^m \S y^n \nonumber \\
			& = \binom{m+n}{n}y^{m+n}-t \sum_{i=\text{min} \{m,n\}-1}^{m+n-2} \bigg\{ \binom{i}{i-m+1}+\binom{i}{i-n+1}\bigg\}y^ixy^{m+n-i-1}.
		\end{align}
	\end{lemma}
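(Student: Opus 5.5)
The plan is to prove \eqref{equn31} by induction on $m+n$, using only the recursive rule (2) of the $t$-shuffle product. Writing $y^m=y\cdot y^{m-1}$ and $y^n=y\cdot y^{n-1}$ and using $\rho(y)=tx$, rule (2) gives
$$y^m\S y^n=y\,(y^{m-1}\S y^n)+y\,(y^m\S y^{n-1})-\delta_{m,1}\,tx\,y^n-\delta_{n,1}\,tx\,y^m .$$
For $m=1$ the first bracket is $1\S y^n=y^n$, and similarly for $n=1$. So the recursion never leaves the family $\{y^a\S y^b\}$, and everything reduces to tracking coefficients.

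First I rewrite the claim in a uniform way. For $i\ge 0$ we have $\binom{i}{i-m+1}=\binom{i}{m-1}$ when $i\ge m-1$, and it vanishes otherwise. Hence the $t$-part of \eqref{equn31} is
$$-t\sum_{i=0}^{m+n-2}c(m,n,i)\,y^ixy^{m+n-i-1},\qquad c(m,n,i)=\binom{i}{m-1}+\binom{i}{n-1}.$$
The stated range $\min\{m,n\}-1\le i\le m+n-2$ is exactly where $c(m,n,i)$ can be nonzero, given that the word $y^ixy^{m+n-1-i}$ must have length $m+n$. For the $t$-free part, the recursion gives $\binom{m-1+n}{n}+\binom{m+n-1}{n-1}=\binom{m+n}{n}$ by Pascal's rule. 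This is just the ordinary shuffle count and is routine.

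For the $t$-part, I substitute the induction hypothesis into the two bracketed terms. Prefixing $y$ shifts $i\mapsto i+1$, so for $i\ge 1$ the coefficient of $-t\,y^ixy^{m+n-1-i}$ is $c(m-1,n,i-1)+c(m,n-1,i-1)$. When $m,n\ge 2$, Pascal's rule applied twice gives
$$\binom{i-1}{m-2}+\binom{i-1}{n-1}+\binom{i-1}{m-1}+\binom{i-1}{n-2}=\binom{i}{m-1}+\binom{i}{n-1}.$$
The word with $i=0$, namely $xy^{m+n-1}$, comes only from the $\delta$-terms. Its coefficient is $\delta_{m,1}+\delta_{n,1}=\binom{0}{m-1}+\binom{0}{n-1}$, which matches the claim.

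The main obstacle is the boundary cases $m=1$ or $n=1$. There the hypothesis would be applied to $y^0\S y^n=y^n$, which has no $t$-term, but the formal formula $c(0,n,i)=\binom{i}{-1}+\binom{i}{n-1}$ would wrongly contribute $\binom{i}{n-1}$. So I treat $m=1$ separately, by induction on $n$ (and $n=1$ by symmetry). The recursion reads $y\S y^n=y^{n+1}+y\,(y\S y^{n-1})-tx\,y^n$ for $n\ge2$, with base case $y\S y=2y^2-2t\,xy$. Using the hypothesis for $y\S y^{n-1}$, the coefficient of $-t\,y^ixy^{n-i}$ for $i\ge1$ is $c(1,n-1,i-1)=\delta_{i-1,0}+\binom{i-1}{n-2}$. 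I then check directly that this equals $\delta_{i,0}+\binom{i}{n-1}=c(1,n,i)$ in the range $1\le i\le n-1$: both sides are $1$ at $i=n-1$ and $0$ for $1\le i\le n-2$, using $n\ge2$. Together with the $i=0$ term $-tx\,y^n$, which has coefficient $1=c(1,n,0)$, this settles the boundary cases, and the general induction then goes through for $m,n\ge2$.
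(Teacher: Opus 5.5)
Your route differs from the paper's. The paper labels the last letters $Y$ of $y^m$ and $Y'$ of $y^n$, asserts that the $t$-terms arise by replacing whichever of them occurs first by $-tx$, and counts the $\binom{i}{m-1}$ (resp.\ $\binom{i}{n-1}$) arrangements of the letters before it. You instead run an induction on $m+n$ directly from rule (2), which has the advantage of deriving that replacement rule rather than assuming it. Your Pascal step for $m,n\ge 2$ is fine.

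\textbf{The gap is in the boundary case.} You evaluate $\binom{i}{m-1}$ at $m=1$ as $\delta_{i,0}$, but $\binom{i}{0}=1$ for every $i\ge 0$. You appear to have confused it with $\binom{0}{i}=\delta_{i,0}$, which is what correctly appears in your $i=0$ computation. So your claimed formula $c(1,n,i)=\delta_{i,0}+\binom{i}{n-1}$ is false. Already $y\S y^2=3y^3-t\,xy^2-2t\,yxy$, so $c(1,2,1)=2$, not $1$. In general the $t$-coefficients of $y\S y^n$ are $1$ for $0\le i\le n-2$ and $2$ for $i=n-1$.

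Your ``direct check'' is also false as stated. For $n\ge 3$ and $i=1$, the left side $\delta_{0,0}+\binom{0}{n-2}$ equals $1$ while the right side equals $0$. For $n=2$ and $i=1$ it reads $2=1$. This matters beyond the boundary itself: your general step at $m=2$ uses $c(1,n,i-1)=\binom{i-1}{0}+\binom{i-1}{n-1}$ with $\binom{i-1}{0}=1$, which is the correct value, not the one your boundary case produces.

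\textbf{The repair is immediate.} With $c(1,n,i)=1+\binom{i}{n-1}$, the identity you need is $1+\binom{i-1}{n-2}=1+\binom{i}{n-1}$ for $1\le i\le n-1$. This holds because both binomials equal $\delta_{i,n-1}$. The $i=0$ coefficient, coming from $-t\,xy^n$, is $1=1+\binom{0}{n-1}$ for $n\ge 2$.

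\textbf{A smaller point concerns the upper limit.} The limit $m+n-2$ is not forced by length, since $y^{m+n-1}x$ also has length $m+n$. Moreover, $\binom{i}{m-1}+\binom{i}{n-1}$ does not vanish at $i=m+n-1$. So treat $c(m,n,i)$ as the actual coefficient and apply Pascal's rule only for $1\le i\le m+n-2$. Then note separately that $y^{m+n-1}x$ never occurs, because the $t$-parts of $y^{m-1}\S y^n$ and $y^m\S y^{n-1}$ stop at $i=m+n-3$.
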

	
	\begin{proof}
		We prove this by the combinatorial description of shuffle product. Write $y^m \S y^n$ as $y_1y_2 \dots y_m \S {y_1}'{y_2}' \dots {y_n}'$. Let $y_m=Y$ and ${y_n}'=Y'$. Then $y^m \SH y^n$ is a linear combination of the following words:
		\begin{align*}
			&(i) ~y^iYy^{m+n-i-1}, \hspace{2em} m-1 \leq i \leq m+n-2 ,\\
			&(ii) ~ y^iY'y^{m+n-i-1}, \hspace{2em} n-1 \leq i \leq m+n-2.
		\end{align*}
		Now for a word in the case $(i)$, $(m-1)$ many $y$'s in $y^i$ 
		comes from $y^m$ and the remaining $(i-m+1)$ many $y$ comes from $y^n$.
		Hence, the coefficient corresponding to this word is $\binom{i}{i-m+1}$. Similarly, for a word in the case $(ii)$, $(n-1)$ many $y$'s in $y^i$ comes from
		 $y^n$ and the remaining $(i-n+1)$ number of $y$ comes from $y^m$.
		Hence, the coefficient corresponding to this word is $\binom{i}{i-n+1}$.	
		So, \begin{align*}
			&y^m \SH y^n \\
			&=\bigg\{\sum_{i=m-1}^{m+n-2} \binom{i}{i-m+1} + \sum_{i=n-1}^{m+n-2} \binom{i}{i-n+1}\bigg\} y^{m+n}\\
			&=\binom{m+n}{m} y^{m+n}.
		\end{align*}
		Now different from the shuffle product, the $t$-shuffle product will have new words coming from the words in the case of $(i)$ by replacing $Y$ by $-tx$ with the same coefficient and in the case of $(ii)$ by replacing $Y'$ by $-tx$ with the same coefficient.
		Therefore,
		\begin{align*}
			&y^m \S y^n \nonumber \\
			&= \binom{m+n}{n}y^{m+n}-t \bigg\{\sum_{i=m-1}^{m+n-2} \binom{i}{i-m+1} y^ixy^{m+n-i-1} + \sum_{i=n-1}^{m+n-2} \binom{i}{i-n+1} y^ixy^{m+n-i-1}\bigg\} \\
			& = \binom{m+n}{n} y^{m+n}- t \sum_{i=\text{min} \{m,n\}-1}^{m+n-2} \bigg\{ \binom{i}{m-1}+\binom{i}{n-1}\bigg\}y^ixy^{m+n-i-1}.
		\end{align*} 	
	\end{proof}
\textbf	{Remark:}Lemma \ref{l1} can be obtained directly from Theorem \ref{th4} as well.\\

Next we have the following proposition:
	\begin{proposition}
		\label{j1}
		For any positive integers $a, b, r, s$, we have
		\begin{align}
			& x^ay^r \S x^by^s \nonumber \\
			&= \sum_{\substack{{\a}_1+ \dots + {\a}_{r+s}=a+b;\\ {\a}_1, \dots , {\a}_{r+s} \geq 0}} \bigg\{\sum_{l=1}^{r} \binom{{\a}_1}{a} \binom{r+s-l-1}{r-l} \prod_{j=l+2}^{r+s}{ \delta}_{{\a}_j,0}  + \sum_{l=1}^{s} \binom{{\a}_1}{b} \binom{r+s-l-1}{s-l}\nonumber \\
			& \ \ \ \times \prod_{j=l+2}^{r+s}{ \delta}_{{\a}_j,0} \bigg\} {x}^{{\a}_1}y \dots {x}^{{\a}_{r+s}}y -t \bigg[ \sum_{l=1}^{r-1} \sum_{\substack{{\a}_1+ \dots + {\a}_{l+1}=a+b;\\ {\a}_1, \dots , {\a}_{l+1} \geq 0}} \sum_{i=\text{min}\{r-l, s-1\}-1}^{r+s-l-3} \binom{{\a}_1}{a} \bigg \{\binom{i}{r-l-1} \nonumber 
		\end{align}
			\begin{align}
				\label{e33}
			& \ \ \  + \binom{i}{s-2} \bigg\}  +  \sum_{l=1}^{s-1} \sum_{\substack{{\a}_1+ \dots + {\a}_{l+1}=a+b;\\ {\a}_1, \dots , {\a}_{l+1} \geq 0}} \sum_{i=\text{min}\{r-1, s-l\}-1}^{r+s-l-3} \binom{{\a}_1}{b} \bigg \{\binom{i}{s-l-1} + \binom{i}{r-2} \bigg\}\bigg] \nonumber \\
			& \ \  \times {x}^{{\a}_1}y \dots {x}^{{\a}_{l+1}}y^{i+1}xy^{r+s-l-i-2}-t \bigg[{\delta}_{s,1}\sum_{l=1}^{r-1} \sum_{\substack{{\a}_1+ \dots + {\a}_{l+1}=a+b;\\ {\a}_1, \dots , {\a}_{l+1} \geq 0}} \binom{{\a}_1}{a} {x}^{{\a}_1}y \dots {x}^{{\a}_{l}}y{x}^{{\a}_{l+1}+1}y^{r-l} \nonumber \\
			& \ \ \ \ +{\delta}_{r,1}\sum_{l=1}^{s-1} \sum_{\substack{{\a}_1+ \dots + {\a}_{l+1}=a+b;\\ {\a}_1, \dots , {\a}_{l+1} \geq 0}} \binom{{\a}_1}{b} {x}^{{\a}_1}y \dots {x}^{{\a}_{l}}y{x}^{{\a}_{l+1}+1}y^{s-l}  +  \sum_{\substack{{\a}_1+ \dots + {\a}_{r+1}=a+b;\\ {\a}_1, \dots , {\a}_{r+1} \geq 0}} \binom{{\a}_1}{a} {x}^{{\a}_1}y \nonumber \\
			& \ \ \ \   \dots {x}^{{\a}_{r-1}}y{x}^{{\a}_{r}+{\a}_{r+1}+1}y^{s}  + \sum_{\substack{{\a}_1+ \dots + {\a}_{s+1}=a+b;\\ {\a}_1, \dots , {\a}_{s+1} \geq 0}} \binom{{\a}_1}{b} {x}^{{\a}_1}y \dots {x}^{{\a}_{s-1}}y{x}^{{\a}_{s}+{\a}_{s+1}+1}y^{r}\bigg]
		\end{align}
	\end{proposition}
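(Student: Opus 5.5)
We would prove Proposition \ref{j1} by combining the combinatorial description of the shuffle product with the observation, already used in Theorem \ref{th4} and Lemma \ref{l1}, that the $t$-shuffle product equals the ordinary shuffle product plus correction words. Each correction word arises from an ordinary shuffle word by replacing a single $y$ with $-tx$: namely the last letter of whichever factor is exhausted first. More precisely, unwinding rule (2), the term $-\delta(w_2)\rho(b)aw_1$ fires exactly when one word has been reduced to the single letter $y$ while the other is still nonempty. Hence in each shuffle word, the final $y$ of the factor that finishes first becomes $-tx$, and the coefficient is unchanged.

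\textbf{Step 1 (the $t$-free part).} We would classify the shuffles of $x^ay^r$ and $x^by^s$ by which factor supplies the first $y$ of the result, and by how many $y$'s of that factor precede the first $y$ of the other factor. The resulting shape is $x^{\a_1}yx^{\a_2}y\cdots$ with all $x$'s placed among the first $l+1$ gaps and $\a_j=0$ for $j\ge l+2$. If the first $y$ comes from $x^ay^r$ and the first $l$ $y$'s precede the first $y'$, then:
\begin{itemize}
\item the $x^a$ block must lie entirely before the first $y$;
\item the $x^b$ block interleaves with $x^a$ in $\binom{\a_1}{a}$ ways in the first gap, the remainder of $x^b$ filling gaps $2,\dots,l+1$;
\item the remaining $y$'s interleave in $\binom{r+s-l-1}{r-l}$ ways.
\end{itemize}
This reproduces the first displayed coefficient, and symmetrically the second. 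Alternatively, this part is just Theorem \ref{th4} at $t=0$, specialized to $a_1=a$, $a_i=0$ for $i\ge2$ (and likewise for $b$). That specialization is a consistency check rather than a new derivation.

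\textbf{Step 2 (the $t$-terms).} We would split according to where the first factor finishes. The generic situation is that, once the $x$-blocks are placed and after the first $l+1$ $y$-positions, one is left with a pure shuffle of $y^{r-l-1}$ with $y^{s-1}$ (in the case where the first $y'$ is at position $l+1$). Lemma \ref{l1} applies directly here: it gives the correction $-t\sum_i\{\binom{i}{r-l-1}+\binom{i}{s-2}\}y^ixy^{\cdots}$, which accounts for the shape $x^{\a_1}y\cdots x^{\a_{l+1}}y^{i+1}xy^{r+s-l-i-2}$. The degenerate situations, where Lemma \ref{l1} does not apply because one of the exponents is $0$, give the remaining terms:
\begin{itemize}
\item when $s=1$ (respectively $r=1$), the other factor's single $y$ is the one replaced, producing $x^{\a_{l+1}+1}$;
\item when one factor is exhausted before the other has emitted any $y$, i.e.\ all $r$ $y$'s of $x^ay^r$ precede every $y'$, its last $y$ becomes $x$ and merges two $x$-blocks, giving $x^{\a_r+\a_{r+1}+1}y^s$ (and symmetrically $x^{\a_s+\a_{s+1}+1}y^r$).
\end{itemize}
For these terms we would check that the coefficient is again $\binom{\a_1}{a}$, since only the interleaving in the first gap contributes multiplicity.

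\textbf{Main obstacle.} The hard part is the bookkeeping in Step 2, making sure that every shuffle word is assigned to exactly one case with no double counting. The dangerous overlaps are at the boundaries: $l=r$ versus $l=r-1$ with $s=1$, and the cases where Lemma \ref{l1} would be applied with exponent $0$. The fallback plan is to handle this by induction on $r+s$ using the recursive rule (2): peel off the leading letters, $x$ first and then $y$, verify the claimed identity term by term, and use Lemma \ref{l1} as the base for the pure-$y$ tails.
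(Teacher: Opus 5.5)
Your proposal is correct and follows the paper's proof. It uses the same split by how many $y$'s of one factor precede the first $y$ of the other, and takes the $t$-free part from Li--Qin (equivalently Theorem~\ref{th4} at $t=0$). It applies Lemma~\ref{l1} to the generic pure-$y$ tail and treats $l=r$ (resp.\ $l=s$) and $s=1$ (resp.\ $r=1$) separately. Your rule ``the last $y$ of the factor that finishes first becomes $-tx$'' just makes explicit what the paper uses implicitly when it replaces a $y$ ``with the same coefficient''.

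One slip to fix: with $l$ $y$'s of $x^ay^r$ before the first $y'$, the tail is $y^{r-l}\S y^{s-1}$, not $y^{r-l-1}\S y^{s-1}$. Your binomials $\binom{i}{r-l-1}+\binom{i}{s-2}$ already correspond to $m=r-l$, $n=s-1$ in Lemma~\ref{l1}. So the only degenerate boundaries are $l=r$ and $s=1$, and $l=r-1$ with $s\ge 2$ is an ordinary generic case.
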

\begin{proof}
	Let us take $y$ in $x^ay^r$ as $y_1$ and $y $ in $x^by^s$ as $y_2$. Then there are two possibilities for the position of y's, when we compute $x^ay^r \S x^by^s$ :
	\begin{align*}
		&(i) \underbrace{y_1 \dots y_1}_ly_2(\underbrace{y_1 \dots y_1}_{r-l} \S \underbrace{y_2 \dots y_2}_{s-1}) , \hspace{2cm} 1 \leq l \leq r \\
		&(ii) \underbrace{y_2 \dots y_2}_ly_1(\underbrace{y_1 \dots y_1}_{r-1} \S \underbrace{y_2 \dots y_2}_{s-l}) , \hspace{2cm} 1 \leq l \leq s
	\end{align*}
Using the combinatorial description of shuffle product, it is proved in \cite[Proposition 3.1]{1} that
		\begin{align}
			\label{34e}
			& x^ay^r \SH x^by^s \nonumber \\
			&= \sum_{\substack{{\a}_1+ \dots + {\a}_{r+s}=a+b;\\ {\a}_1, \dots , {\a}_{r+s} \geq 0}} \{ c_1+c_2\} {x}^{{\a}_1}y \dots {x}^{{\a}_{r+s}}y,	
		\end{align} 
	where 
	\begin{align*}
	&c_1= \sum_{l=1}^{r} \binom{{\a}_1}{a} \binom{r+s-l-1}{r-l} \prod_{j=l+2}^{r+s}{ \delta}_{{\a}_j,0}, \nonumber \\
	& c_2= \sum_{l=1}^{s} \binom{{\a}_1}{b} \binom{r+s-l-1}{s-l} \prod_{j=l+2}^{r+s}{ \delta}_{{\a}_j,0}.
\end{align*}
Different from the shuffle product, for the $t$-shuffle product, we need to find the place for $y$, which will be replaced by $-tx$.  
For case $(i)$, we have following subcases:\\
(a) When $l=r$, we have  new terms by replacing $l^{th}$ $y_1$ by $-tx$ with the same coefficient ($c_1$) as shuffle product, i.e., we have the new terms as follows:
\begin{align}
			\label{e34}
			-t	\sum_{\substack{{\a}_1+ \dots + {\a}_{r+1}=a+b;\\ {\a}_1, \dots , {\a}_{r+1} \geq 0}} \binom{{\a}_1}{a} {x}^{{\a}_1}y \dots {x}^{{\a}_{r-1}}y{x}^{{\a}_{r}+{\a}_{r+1}+1}y^{s}.
		\end{align}
(b) When $s=1$ $(r \neq l)$, we have new words by replacing $1^{st}$ $y_2$ with $-tx$, i.e., ${(l+1)}^{th}$ $y$ by $-tx$ with the same coefficient as shuffle product, i.e., we have the following new terms	
		\begin{align}
			-t{\delta}_{s,1}\sum_{l=1}^{r-1} \sum_{\substack{{\a}_1+ \dots + {\a}_{l+1}=a+b;\\ {\a}_1, \dots , {\a}_{l+1} \geq 0}} \binom{{\a}_1}{a} {x}^{{\a}_1}y \dots {x}^{{\a}_{l}}y{x}^{{\a}_{l+1}+1}y^{r-l}.
		\end{align}
(c) For $1 \leq l \leq r-1 (s \neq 1)$, we have the following new terms
		\begin{align}
			&-t \sum_{l=1}^{r-1} \sum_{\substack{{\a}_1+ \dots + {\a}_{l+1}=a+b;\\ {\a}_1, \dots , {\a}_{l+1} \geq 0}} \sum_{i=\text{min}\{r-l, s-1\}-1}^{r+s-l-3} \binom{{\a}_1}{a} \bigg \{\binom{i}{r-l-1} + \binom{i}{s-2} \bigg\} \nonumber \\
			& \ \ \ \times {x}^{{\a}_1}y \dots {x}^{{\a}_{l+1}}y^{i+1}xy^{r+s-l-i-2},
		\end{align}	
since $y^m \S y^n$ has the following new terms different from shuffle product 	
		\begin{align*}
			- t \sum_{i=\text{min} \{m,n\}-1}^{m+n-2} \bigg\{ \binom{i}{m-1}+\binom{i}{n-1}\bigg\}y^ixy^{m+n-i-1}.
		\end{align*}
Similarly, in case (ii), we have the new terms
			\begin{align}
			-t\sum_{\substack{{\a}_1+ \dots + {\a}_{s+1}=a+b;\\ {\a}_1, \dots , {\a}_{s+1} \geq 0}} \binom{{\a}_1}{b} {x}^{{\a}_1}y \dots {x}^{{\a}_{s-1}}y{x}^{{\a}_{s}+{\a}_{s+1}+1}y^{r},
		\end{align}	
		\begin{align}
			-t{\delta}_{r,1}\sum_{l=1}^{s-1} \sum_{\substack{{\a}_1+ \dots + {\a}_{l+1}=a+b;\\ {\a}_1, \dots , {\a}_{l+1} \geq 0}} \binom{{\a}_1}{b} {x}^{{\a}_1}y \dots {x}^{{\a}_{l}}y{x}^{{\a}_{l+1}+1}y^{s-l},
		\end{align}
		and
		\begin{align}
			\label{e39}
			&-t\sum_{l=1}^{s-1} \sum_{\substack{{\a}_1+ \dots + {\a}_{l+1}=a+b;\\ {\a}_1, \dots , {\a}_{l+1} \geq 0}} \sum_{i=\text{min}\{r-1, s-l\}-1}^{r+s-l-3} \binom{{\a}_1}{b} \bigg \{\binom{i}{s-l-1} + \binom{i}{r-2} \bigg\} \nonumber \\
			& \hspace*{5cm} \times {x}^{{\a}_1}y \dots {x}^{{\a}_{l+1}}y^{i+1}xy^{r+s-l-i-2}.
		\end{align}
Hence after adding all the new terms \eqref{e34}-\eqref{e39} with shuffle product formula \eqref{34e}, we obtain the $t$-shuffle product formula \eqref{e33}. 
	\end{proof}
	Applying the $\mathbb{Q}[t]$-linear map $Z^t$ on both sides of \eqref{e33}, we get the first formula for height one case, that is,  Theorem \ref{Tm1}. 
\subsection{Recursive formula for $t$-shuffle product}
Here we obtain a recursive formula for $t$-shuffle product. For that, we first obtain the following lemma.
		\begin{lemma}
		\label{lemma4}
		Let $a=a_1 a_2 \dots  a_m$ and $b=b_1 b_2 \dots  b_n$, with $ a_i, b_j \in A$. Then for a fixed $k$ with $1 \leq k \leq m$, we have,
		\begin{align}
			\label{39}
			a \S b &= \sum_{i=0}^{n} (a_1 \dots a_{k-1} \SH b_1 \dots b_i)a_k(a_{k+1} \dots a_m \S b_{i+1} \dots b_n) \nonumber \\
			& \ \ -(a_1 \dots a_{k-1} \SH b_1 \dots b_{n-1}\rho(b_n))a_ka_{k+1} \dots a_m \nonumber \\
			& \ \ - \delta_{k,m} \sum_{i=0}^{n-1} (a_1\dots a_{m-1} \SH b_1 \dots b_i) \rho(a_m)b_{i+1} \dots b_n.
		\end{align}
	\end{lemma}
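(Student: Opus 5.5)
We prove \eqref{39} by induction, taking rule (2) of the $t$-shuffle product as the only input. The idea is that every correction term in rule (2) arises at the moment one of the two words is reduced to a single letter. That letter is then the \emph{last} letter of its word, so it can be replaced by $\rho$ of itself and placed in front of the rest of the other word. Consequently, inside the prefix $a_1\dots a_{k-1}$, which is never exhausted before $a_k$, no correction can occur. This is why the ordinary shuffle $\SH$ appears in the first factors, while corrections show up only in two places: when $b_n$ is consumed before $a_k$ (second line), and, when $k=m$, when $a_m$ is the last letter of $a$ (third line). Throughout we use freely the ordinary recursion $a_1u\SH b_1v=a_1(u\SH b_1v)+b_1(a_1u\SH v)$, and the convention that an empty product of letters is $1$.

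\emph{Base case $k=1$.} Here \eqref{39} reads
\[
a\S b=\sum_{i=0}^{n} b_1\dots b_i\,a_1\,(a_2\dots a_m\S b_{i+1}\dots b_n)-b_1\dots b_{n-1}\rho(b_n)a_1\dots a_m-\delta_{1,m}\sum_{i=0}^{n-1} b_1\dots b_i\,\rho(a_1)\,b_{i+1}\dots b_n .
\]
We prove this by induction on $n$. Write $a=a_1w$ and $b=b_1w_2$, and apply rule (2).
\begin{itemize}
\item The term $a_1(w\S b)$ is exactly the $i=0$ summand.
\item The correction $-\delta(w)\rho(a_1)b$ is the $i=0$ term of the third line; it is present only when $m=1$, matching $\delta_{1,m}$.
\item The correction $-\delta(w_2)\rho(b_1)a$ is the second line for $n=1$.
\item For $n\ge 2$ the term $b_1(a\S b_2\dots b_n)$ is expanded by the induction hypothesis. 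After left multiplication by $b_1$, it supplies the summands $i\ge 1$ of the first line, the second line, and the terms $i\ge1$ of the third line.
\end{itemize}
Comparing these three groups with the displayed identity closes the base case.

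\emph{Inductive step $k\ge 2$.} We induct on $(k-1)+n$. Since $m\ge k\ge 2$, the word $w=a_2\dots a_m$ is nonempty, so rule (2) gives
\[
a\S b=a_1(a_2\dots a_m\S b)+b_1(a\S b_2\dots b_n)-\delta_{n,1}\rho(b_1)a .
\]
We expand the first summand by the induction hypothesis for the word $a_2\dots a_m$ with $k-1$ in place of $k$. We expand the second by the hypothesis for $b_2\dots b_n$ with the same $k$; when $n=1$ this term is simply $b_1a$. Then we regroup, using the ordinary shuffle recursion on $a_1\dots a_{k-1}\SH b_1\dots b_i$.
\begin{itemize}
\item First line: the summand $i$ of \eqref{39} arises from the first summand's $i$-th term together with the second summand's $(i-1)$-th term. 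At $i=0$ only the first contributes, since $a_1\dots a_{k-1}\SH 1$ starts with $a_1$. At $i=n$ with $n=1$, the $b_1$-contribution is the bare word $b_1a$.
\item Second line: the terms $-(\cdots\SH b_1\dots b_{n-1}\rho(b_n))a_k\dots a_m$ combine in the same way. In the edge case $n=1$ the correction $-\rho(b_1)a$ supplies the missing piece $-(a_1\dots a_{k-1}\SH 1)\rho(b_1)a_k\dots a_m$.
\item Third line ($k=m$): the $\delta_{k,m}$ terms recombine identically, because $\delta_{k-1,m-1}=\delta_{k,m}$.
\end{itemize}

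The main obstacle is this boundary bookkeeping in the inductive step. One must check that the $i=0$ and $i=n$ summands, and the case $n=1$ (where $b_2\dots b_n$ is empty and rule (2) produces the extra $\rho(b_1)$ term), fit exactly into the regrouped shuffle recursion, with no term double counted or missing. We would first write out the case $n=1$, $k=2$ explicitly to fix these boundary conventions, and then run the general regrouping.
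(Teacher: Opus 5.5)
Your proof is correct and is essentially the paper's argument. You split off $a_1$ with rule (2), expand $a_1(a_2\dots a_m\S b)$ by the hypothesis with index $k-1$ and $b_1(a\S b_2\dots b_n)$ with index $k$, and recombine through the ordinary shuffle recursion. Your induction on $(k-1)+n$ is more careful than the paper's: it explicitly tracks the corrections $-\delta(w)\rho(a_1)b$ and $-\delta_{n,1}\rho(b_1)a$. The paper checks only $m=n=1$ and uses its correction-free equation (40) for $m,n\ge 2$, so it leaves out the cases $m=1<n$ and $n=1<m$; your argument covers them.

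One slip: in the $n=1$ edge case the missing piece is $-\rho(b_1)(a_1\dots a_{k-1}\SH 1)a_k\dots a_m=-\rho(b_1)a$, with $\rho(b_1)$ in front. It is not $-(a_1\dots a_{k-1}\SH 1)\rho(b_1)a_k\dots a_m$ as you wrote.
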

	\begin{proof}
		For $m=1, n=1$, it is obvious. Let $m,n \geq 2$.
		We prove this by induction on $m+n$. Assume that \eqref{39} holds for $u+v$ with $u+v < m+n$, i.e., for a fixed $k$ with $1 \leq k \leq u$, we have,
		\begin{align*}
			&a_1 a_2 \dots  a_u \S b_1 b_2 \dots  b_v \\
			&= \sum_{i=0}^{v} (a_1 \dots a_{k-1} \SH b_1 \dots b_i)a_k(a_{k+1} \dots a_u \S b_{i+1} \dots b_v) \nonumber \\
			& \ \ -(a_1 \dots a_{k-1} \SH b_1 \dots b_{v-1}\rho(b_v))a_ka_{k+1} \dots a_u \nonumber \\
			& \ \ - \delta_{k,u} \sum_{i=0}^{v-1} (a_1\dots a_{u-1} \SH b_1 \dots b_i) \rho(a_u)b_{i+1} \dots b_v.
		\end{align*}
		By the definition of $t$-shuffle product, we have
		\begin{align}
			\label{40}
			a \S b &= a_1(a_2 \dots a_{m} \S b_1 \dots b_n)+b_1(a_1 \dots a_{m} \S b_2 \dots b_n). 
		\end{align}
		Here in both $t$-shuffle product on R.H.S of \eqref{40}, total number of $a_i$ and $ b_j$ is less than $m+n$. Applying the inductive hypothesis in second $t$-shuffle product, we have
		\begin{align*}
			a \S b &= a_1(a_2 \dots a_{m} \S b_1 \dots b_n)+b_1 \bigg\{\sum_{i=1}^{n} ( b_2 \dots b_i)a_1(a_{2} \dots a_m \S b_{i+1} \dots b_n) \nonumber \\
			& \ \ -(  b_2 \dots b_{n-1}\rho(b_n))a_1a_{2} \dots a_m \bigg\} \\
			& = \sum_{i=0}^{n} ( b_1b_2 \dots b_i)a_1(a_{2} \dots a_m \S b_{i+1} \dots b_n)- b_1b_2 \dots b_{n-1}\rho(b_n)a_1a_{2} \dots a_m.
		\end{align*}
	Hence by mathematical induction \eqref{39} is true for $k=1$.
		Let $2 \leq k \leq m$. By inductive hypothesis, \eqref{40} becomes
		\begin{align*}
			a \S b &=a_1 \bigg\{ \sum_{i=0}^{n} (a_2 \dots a_{k-1} \SH b_1 \dots b_i)a_k(a_{k+1} \dots a_m \S b_{i+1} \dots b_n) \nonumber \\
			& \ \ -(a_2 \dots a_{k-1} \SH b_1 \dots b_{n-1}\rho(b_n))a_ka_{k+1} \dots a_m \nonumber \\
			& \ \ - \delta_{k,m} \sum_{i=0}^{n-1} (a_2\dots a_{m-1} \SH b_1 \dots b_i) \rho(a_m)b_{i+1} \dots b_n\bigg\} \\
			&+b_1\bigg\{ \sum_{i=1}^{n} (a_1 \dots a_{k-1} \SH b_2 \dots b_i)a_k(a_{k+1} \dots a_m \S b_{i+1} \dots b_n) \nonumber \\
			& \ \ -(a_1 \dots a_{k-1} \SH b_2 \dots b_{n-1}\rho(b_n))a_ka_{k+1} \dots a_m \nonumber \\
			& \ \ - \delta_{k,m} \sum_{i=1}^{n-1} (a_1\dots a_{m-1} \SH b_2 \dots b_i) \rho(a_m)b_{i+1} \dots b_n\bigg\} \\
			&=\sum_{i=0}^{n} (a_1 \dots a_{k-1} \SH b_1 \dots b_i)a_k(a_{k+1} \dots a_m \S b_{i+1} \dots b_n) \nonumber \\
			& \ \ -(a_1 \dots a_{k-1} \SH b_1 \dots b_{n-1}\rho(b_n))a_ka_{k+1} \dots a_m \nonumber \\
			& \ \ - \delta_{k,m} \sum_{i=0}^{n-1} (a_1\dots a_{m-1} \SH b_1 \dots b_i) \rho(a_m)b_{i+1} \dots b_n.
		\end{align*}
	Therefore, by mathematical induction we can complete the proof.
		\end{proof}
	By the above lemma, we obtain the following recursive formula for $t$-shuffle product.
	\begin{theorem} 
		\label{th4.2}
		Let $k,l \geq 1$. For $ a_1, \dots a_k, b_1, \dots b_l \in A$ and $ m_1, \dots m_k, n_1, \dots n_l \in \mathbb{Z}_{\geq 0}$, we have
		\begin{align}
			\label{41}
			&(a_1^{m_1} \dots a_k^{m_k}) \S (b_1^{n_1} \dots b_l^{n_l})\nonumber \\
			&= \sum_{1 \leq j \leq l} \sum_{\substack{1 \leq n_{j_1} \leq n_j \\ n_{j_1}+n_{j_2}=n_j}} \{(a_1^{m_1-1} \SH b_1^{n_1} \dots b_j^{n_{j_1}})a_1(a_2^{m_2} \dots a_k^{m_k} \S b_j^{n_{j_2}} \dots  b_l^{n_l})\}\nonumber \\
			& \ \ \ + a_1^{m_1} \{(a_2^{m_2} \dots a_k^{m_k}) \S (b_1^{n_1} \dots b_l^{n_l})\}\nonumber \\
			& \ \ \ -\delta_{k,1}\bigg[\sum_{j=1}^{l-1}\sum_{\substack{1 \leq n_{j_1} \leq n_j \\ n_{j_1}+n_{j_2}=n_j}} (a_1^{m_1-1} \SH b_1^{n_1} \dots b_j^{n_{j_1}})\rho(a_1) b_j^{n_{j_2}} \dots  b_l^{n_l}+ a_1^{m_1-1}\rho(a_1)b_1^{n_1} \dots b_l^{n_l}\bigg] \nonumber \\
			& \ \ \ - (a_1^{m_1}  \SH (b_1^{n_1} \dots b_l^{n_l-1}\rho(b_l)))a_1a_2^{m_2} \dots a_k^{m_k}.
		\end{align}
	\end{theorem}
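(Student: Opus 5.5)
The plan is to derive \eqref{41} directly from Lemma \ref{lemma4} by choosing the split point $k$ cleverly and then regrouping the resulting sum according to the block structure of the second word.

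\emph{Setup.} Write $a=a_1^{m_1}a_2^{m_2}\dots a_k^{m_k}$ as a word of length $M=m_1+\dots+m_k$ and $b=b_1^{n_1}\dots b_l^{n_l}$ as a word of length $N=n_1+\dots+n_l$. I assume $m_1\geq 1$; otherwise the first block is empty and the formula degenerates. Apply Lemma \ref{lemma4} with the distinguished letter taken to be the \emph{last} letter of the first block $a_1^{m_1}$, i.e.\ position $m_1$ in $a$. Then:
\begin{itemize}
\item the prefix before this position is $a_1^{m_1-1}$;
\item the letter at this position is $a_1$;
\item the suffix after it is $a_2^{m_2}\dots a_k^{m_k}$.
\end{itemize}
The Kronecker factor $\delta_{m_1,M}$ in the lemma equals $1$ exactly when $a$ consists of the single block $a_1^{m_1}$, which (assuming consecutive blocks use distinct letters, or more precisely that the block count is $k$) is the condition $\delta_{k,1}$.

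\emph{Regrouping the main sum.} The lemma gives a sum over $i=0,\dots,N$ of terms
$$(a_1^{m_1-1}\SH b_{(1)}\dots b_{(i)})\,a_1\,(a_2^{m_2}\dots a_k^{m_k}\S b_{(i+1)}\dots b_{(N)}),$$
where $b_{(1)},\dots,b_{(N)}$ are the letters of $b$.
\begin{itemize}
\item The term $i=0$ is $a_1^{m_1-1}a_1(\cdots\S b)=a_1^{m_1}\{(a_2^{m_2}\dots a_k^{m_k})\S b\}$, which is the second line of \eqref{41}.
\item For $1\leq i\leq N$, the cut after the $i$-th letter falls inside a unique block $b_j^{n_j}$. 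Say it takes $n_{j_1}$ letters of that block, with $1\leq n_{j_1}\leq n_j$, leaving $n_{j_2}=n_j-n_{j_1}$. Then the prefix is $b_1^{n_1}\dots b_j^{n_{j_1}}$ and the suffix is $b_j^{n_{j_2}}\dots b_l^{n_l}$.
\item The map $i\mapsto(j,n_{j_1})$ is a bijection onto the index set of the first double sum. So this part reproduces the first line of \eqref{41}.
\end{itemize}

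\emph{The two correction terms.} The second term of Lemma \ref{lemma4} is
$$-(a_1^{m_1-1}\SH b_1^{n_1}\dots b_l^{n_l-1}\rho(b_l))\,a_1a_2^{m_2}\dots a_k^{m_k}.$$
This is the last line of \eqref{41}. I will check the exponent on $a_1$ there carefully: the lemma naturally produces $m_1-1$ inside the shuffle, and the statement should be read consistently with that.

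The third term is present only when $\delta_{k,1}=1$, and equals
$$-\sum_{i=0}^{N-1}(a_1^{m_1-1}\SH b_{(1)}\dots b_{(i)})\,\rho(a_1)\,b_{(i+1)}\dots b_{(N)}.$$
I regroup it the same way as the main sum:
\begin{itemize}
\item $i=0$ gives $a_1^{m_1-1}\rho(a_1)b_1^{n_1}\dots b_l^{n_l}$.
\item $1\leq i\leq N-1$ is reindexed by pairs $(j,n_{j_1})$.
\end{itemize}

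\emph{Expected main obstacle.} The hard part is the boundary bookkeeping in this last regrouping. Because $i\leq N-1$, the cut can never take the entire final block; this excludes only the single case $j=l$, $n_{j_1}=n_l$.
\begin{itemize}
\item The displayed range $j\leq l-1$ also omits the cuts with $j=l$ and $1\leq n_{j_1}\leq n_l-1$. Those must be accounted for, either by showing they vanish or by reading the range in \eqref{41} as $j\le l$ with that one term removed.
\item For $j<l$, the case $n_{j_1}=n_j$ gives $n_{j_2}=0$, i.e.\ an empty remainder of block $j$.
\end{itemize}
Matching these index ranges exactly against \eqref{41} is where I expect the main care to be needed. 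Everything else is a direct substitution into Lemma \ref{lemma4}.
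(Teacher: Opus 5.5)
Your approach is the paper's own: its entire proof is the single line ``take $k=m_1$ in Lemma~\ref{lemma4}''. Your reindexing of the cut position $i$ by pairs $(j,n_{j_1})$ is exactly the bookkeeping that line leaves implicit. You should not, however, leave the boundary issues hedged. The discrepancies you found are misprints in the stated identity, and they cannot be argued away.

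\textbf{Exponent in the last line.} It must be $m_1-1$. Every word of $u\S v$ has length $|u|+|v|$, but the printed term has length one more. The paper itself uses $a_1^{m_1-1}$ when it applies the theorem in \eqref{eq44} and \eqref{eq48}.

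\textbf{Missing cuts in the $\delta_{k,1}$-bracket.} The cuts with $j=l$ and $1\le n_{j_1}\le n_l-1$ do \emph{not} vanish, so your first option fails. Take $y\S y^2$, i.e.\ $k=l=1$, $m_1=1$, $n_1=2$. Even with the exponent corrected, the printed right-hand side is $3y^3-txy^2-tyxy$. The defining recursion (or Lemma~\ref{l1}) gives $y\S y^2=3y^3-txy^2-2tyxy$. The missing $-tyxy$ is precisely your cut $i=1$, i.e.\ $(j,n_{j_1})=(1,1)$. Hence the range must consist of all pairs with $j\le l$ except $(l,n_l)$, which is your second option.

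\textbf{The Kronecker factor.} The factor produced by the lemma is $\delta_{m_1,\,m_1+\cdots+m_k}$. It agrees with $\delta_{k,1}$ except when $k\ge 2$ and $m_2=\cdots=m_k=0$; distinctness of the letters plays no role. You also need $m_1\ge 1$ and $n_l\ge 1$ for the formula to make sense.

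With the identity stated in this corrected form, your derivation is a complete proof. In \eqref{eq48}, where the paper applies the theorem, the first word $x^my^j$ has a nonempty second block, so the faulty $\delta$-term never enters there.
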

	\begin{proof}
	Taking $k=m_1$ in Lemma \ref{lemma4}, we get the proof.	
	\end{proof}
	
	\begin{lemma}
		For integers $m, n \geq 0$, we have
		\begin{align}
			x^m \S y^n =\sum_{\substack{m_1+ \dots +m_{n+1}=m\\ m_i \geq 0}} x^{m_1}yx^{m_2}y \dots x^{m_n}yx^{m_{n+1}}-t \sum_{i=0}^{m-1}\sum_{\substack{m_1+ \dots +m_{n}=i\\ m_j \geq 0}} x^{m_1}yx^{m_2}y \dots x^{m_{n-1}}yx^{m_{n}+m-i+1}.
		\end{align}
	\end{lemma}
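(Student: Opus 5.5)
We prove the identity by induction on $m+n$, using only the defining recursion of $\S$. Terms with $n=0$ are read by the obvious convention: $x^m\S 1=x^m$ and no $t$-term. For $m=0$ the statement is $1\S y^n=y^n$, where the $t$-sum over $0\le i\le -1$ is empty. So let $m,n\ge 1$ and apply rule (2) with $a=x$, $w_1=x^{m-1}$, $b=y$, $w_2=y^{n-1}$. Since $\rho(x)=0$ and $\rho(y)=tx$, we get
\begin{align*}
x^m\S y^n = x\,(x^{m-1}\S y^n)+y\,(x^m\S y^{n-1})-\delta(y^{n-1})\,t\,x^{m+1}.
\end{align*}
The last term appears only when $n=1$. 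Both remaining products have smaller total length, so the induction hypothesis applies to them.

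\emph{Main part.} By induction, $x(x^{m-1}\S y^n)$ contributes the words $x^{m_1}y\cdots yx^{m_{n+1}}$ with $m_1\ge1$ and $\sum m_i=m$: we substitute $m_1\mapsto m_1+1$ in the composition of $m-1$. The product $y(x^m\S y^{n-1})$ contributes exactly those words with $m_1=0$. Together they give the full sum over compositions $m_1+\dots+m_{n+1}=m$. This is the usual bijective splitting by whether the word begins with $x$ or $y$.

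\emph{Correction part.} Write $W_i=x^{m_1}y\cdots x^{m_{n-1}}y\,x^{m_n+m-i+1}$ with $m_1+\dots+m_n=i$. This is the word obtained from a shuffle by replacing the last $y$ by $x$; the index $i$ counts the $x$'s standing before that last $y$. We match the correction words coming from each piece.
\begin{itemize}
\item From $x(x^{m-1}\S y^n)$, the hypothesis gives the words $x\cdot x^{m_1}y\cdots x^{m_n+(m-1)-i+1}$ with $0\le i\le m-2$. Setting $i'=i+1$ and $m_1'=m_1+1$, these are exactly the $W_{i'}$ with $m_1'\ge1$ and $1\le i'\le m-1$.
\item For $n\ge2$, the product $y(x^m\S y^{n-1})$ contributes the words $W_i$ with $m_1=0$ and $0\le i\le m-1$.
\item For $n=1$ the $y$-branch gives no correction, but then $m_1=i$ is forced. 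The missing word is $i=0$, namely $x^{m+1}$, and it is supplied precisely by the boundary term $-t\,x^{m+1}$.
\end{itemize}

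In all cases the corrections assemble into $-t\sum_{i=0}^{m-1}\sum_{m_1+\dots+m_n=i}W_i$, which completes the induction. The only delicate step is this last bookkeeping. The shifted range $0\le i\le m-2$ from the $x$-branch must be checked to combine with the $m_1=0$ terms from the $y$-branch (or with the $\delta$-term when $n=1$), so that we recover exactly $0\le i\le m-1$ with no double counting. The exponent $m_n+m-i+1$ must also be checked to be preserved under the shift $i\mapsto i+1$, $m\mapsto m-1$, and it is.
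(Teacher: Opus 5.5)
Your proof is correct, but it takes a different route from the paper's.

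The paper quotes the shuffle part $x^m\SH y^n$ as known. It then gets the $t$-terms from the combinatorial description of $\S$ used throughout the paper. In every shuffle where the last $y$ of $y^n$ is not the final letter, that $y$ is replaced by $-tx$. If $i$ copies of $x$ precede it, the prefix is $x^i\SH y^{n-1}$ and the suffix is $x^{m-i}$, which gives the second sum directly. No correction comes from $x^m$, since $\rho(x)=0$.

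You instead run an induction on $m+n$ straight from rule (2), splitting by the first letter. You match the correction terms at the level of index tuples $(i,m_1,\dots,m_n)$, not distinct words, and this matters. For $n=1$, all correction words equal $x^{m+1}$, and the coefficient $-tm$ is recovered only because you count tuples. Your two key checks are right:
\begin{itemize}
\item The boundary term $-\delta(y^{n-1})\,t\,x^{m+1}$ supplies exactly the missing $i=0$ term when $n=1$.
\item The shift $i'=i+1$, $m_1'=m_1+1$ preserves the exponent $m_n+m-i+1$ in both the $n=1$ and $n\ge 2$ cases.
\end{itemize}

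The paper's argument is shorter and explains why each correction word appears. However, it relies on the principle ``replace one $y$ by $-tx$ with the same coefficient as in the shuffle product,'' which the paper asserts rather than proves. Your induction is self-contained and makes the conventions for $m=0$ and $n=0$ explicit. The $n=0$ convention is genuinely needed, because the $t$-sum in the statement is not well defined there. In effect, your argument proves the replacement principle in this special case, at the cost of more bookkeeping.
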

	\begin{proof}
		We have 
		\begin{align*}
			x^m \SH y^n =\sum_{\substack{m_1+ \dots +m_{n+1}=m\\ m_i \geq 0}} x^{m_1}yx^{m_2}y \dots x^{m_n}yx^{m_{n+1}}. 
		\end{align*}
		For $t$-shuffle product, we only need to find the terms involving t. Consider the last $y$ in $y^n$ as $Y$. Then when we shuffle $x^m$ and $y^n$, $Y$ takes $n^{th}$ position, ${(n+1)}^{th}$ position, and upto ${(m+n)}^{th}$ position. For $t$-shuffle product different from shuffle product, there are new terms by replacing $Y$ by $-tx$ when $Y$ is in $n^{th}$ position, $({n+1})^{th}$ position upto $({m+n-1})^{th}$ position.
		When $Y$ is in $i^{th}$ position, $n \leq i \leq m+n-1$, the new terms involving t is given by
			\begin{align*}
			&-\sum_{i=0}^{m-1}\sum_{\substack{m_1+ \dots +m_{n}=i\\ m_j \geq 0}} x^{m_1}yx^{m_2}y \dots x^{m_{n-1}}yx^{m_{n}}\rho(y)x^{m-i} \\
			&=-t \sum_{i=0}^{m-1}\sum_{\substack{m_1+ \dots +m_{n}=i\\ m_j \geq 0}} x^{m_1}yx^{m_2}y \dots x^{m_{n-1}}yx^{m_{n}+m-i+1}.
		\end{align*}
Thus we have, 
		\begin{align}
			\label{eq42}
			x^m \S y^n & = B_{n+1}^m-tC_n^{m-1},
		\end{align}
where we denote first sum by $B_{n+1}^m$, and second sum by $C_n^{m-1}.$
\end{proof}

\textbf{Proof of Theorem \ref{Tm2}}:
		By Theorem \ref{th4.2} and equation \eqref{eq42}, we obtain
		\begin{align}
			\label{eq44}
			&y^m \S y^nx^k \nonumber \\
			&= \sum_{\substack{m_1+m_2=m\\ m_i \geq 0}}(y^{n-1} \SH y^{m_1})y(x^k \S y^{m_2})-t(y^{n-1} \SH y^{m-1}x)yx^k \nonumber \\
			& = \sum_{\substack{m_1+m_2=m\\ m_i \geq 0}} \binom{m_1+n-1}{n-1}y^{m_1+n}(B_{m_2+1}^k-tC_{m_2}^{k-1})-t\sum_{\substack{n_1+n_2=n-1\\ n_i \geq 0}}\binom{n_1+m-2}{m-2}y^{n_1+m-1}B_{n_2+1}^1.
		\end{align}
		Similarly, we have
		\begin{align}
			\label{eq45}
			x^m \S y^nx^k &= 	x^m \SH y^nx^k \nonumber \\
			&=\sum_{\substack{m_1+m_2=m\\ m_i \geq 0}} \binom{m_2+k-1}{k-1} B_{n+1}^{m_1}x^{m_2+k},
		\end{align}
		\begin{align}
			\label{eq46}
			y^m \S x^ny^k &= \sum_{\substack{m_1+m_2=m\\ m_i \geq 0}}(x^n \SH y^{m_1}) y (y^{k-1} \S y^{m_2})- t(x^n \SH y^{m-1}x)y^k \nonumber \\
			&= \sum_{\substack{m_1+m_2=m\\ m_i \geq 0}} B_{m_1+1}^n y \bigg[\binom{m_2+k-1}{k-1}y^{m_2+k-1}-t \sum_{i=\text{min} \{m_2,k-1\}-1}^{m_2+k-3} \bigg\{ \binom{i}{m_2-1}+  \nonumber \\
			& \ \ \ \binom{i}{k-2}\bigg\}y^ixy^{m_2+k-i-2}\bigg] -t \sum_{\substack{n_1+n_2=n\\ n_i \geq 0}} B_{m}^{n_1} x^{n_2+1}y^k  \hspace{1cm}[\text{here we use equation \eqref{equn31}}]\nonumber \\
			&= \sum_{\substack{m_1+m_2=m\\ m_i \geq 0}} B_{m_1+1}^n  \bigg[\binom{m_2+k-1}{k-1}y^{m_2+k}-t \sum_{i=\text{min} \{m_2,k-1\}-1}^{m_2+k-3} \bigg\{ \binom{i}{m_2-1}+  \nonumber \\
			& \ \ \  \ \binom{i}{k-2}\bigg\}y^{i+1}xy^{m_2+k-i-2}\bigg] -t \sum_{\substack{n_1+n_2=n\\ n_i \geq 0}} B_{m}^{n_1} x^{n_2+1}y^k
		\end{align}
		and 
		\begin{align}
			\label{eq47}
			x^m \S x^ny^k&= \sum_{\substack{m_1+m_2=m\\ m_i \geq 0}}(x^{n-1} \SH x^{m_1}) x (y^{k} \S x^{m_2}) \nonumber \\
			&=\sum_{\substack{m_1+m_2=m\\ m_i \geq 0}} \binom{m_1+n-1}{m_1} x^{m_1+n}(B_{k+1}^{m_2}-tC_k^{m_2-1})
		\end{align}
		Using Theorem \ref{th4.2}, equations \eqref{eq46},  \eqref{equn31} and equation (18) from \cite{5}, we have
		\begin{align*}
			&x^my^j \S x^ny^k \nonumber \\
			&=\sum_{\substack{n_1+n_2=n\\ m_i \geq 0}}(x^{m-1} \SH x^{n_1})x(y^j \S x^{n_2}y^k)+
			\sum_{\substack{k_1+k_2=k\\ 1 \leq k_1 \leq k}}(x^{m-1} \SH x^{n}y^{k_1})x(y^j \S y^{k_2}) - t(x^{m-1} \SH x^ny^{k-1}x)xy^j \nonumber \\
				\end{align*}
		\begin{align}
		\label{eq48}
			&=\sum_{0 \leq  n_1 \leq n} \binom{m+n_1-1}{m-1}x^{m+n_1}\Bigg\{\sum_{\substack{m_1+m_2=j\\ m_i \geq 0}} B_{m_1+1}^{n-n_1}  \bigg[\binom{m_2+k-1}{k-1}y^{m_2+k} -t \sum_{i=\text{min} \{m_2, k-1\}-1}^{m_2+k-3} \nonumber \\
			& \ \ \ \times  \bigg\{ \binom{i}{m_2-1}+  \binom{i}{k-2}\bigg\}y^{i+1}xy^{m_2+k-i-2}\bigg] -t \sum_{\substack{j_1+j_2=n-n_1\\ j_i \geq 0}} B_{j}^{j_1} x^{j_2+1}y^k\Bigg\}  \nonumber \\
			& \ \ \ + \sum_{ 1 \leq k_1 \leq k} \sum_{\substack{m_1+m_2=m-1\\ m_i \geq 0}} \binom{m_1+n-1}{n-1} x^{m_1+n}  B_{k_1+1}^{m_2} x \Bigg[\binom{j+k-k_1}{j}y^{j+k-k_1} \nonumber \\
			& \ \ \ -t \sum_{i=min \{j, k-k_1\}-1}^{j+k-k_1-2} \bigg\{ \binom{i}{j-1}+\binom{i}{k-k_1-1}\bigg\} y^ixy^{j+k-k_1-i-1}\Bigg] \nonumber \\
			& \ \ \ -t \sum_{\substack{m_1+m_2+m_3=m-1\\ m_i \geq 0}} \binom{m_1+n-1}{n-1}x^{m_1+n}B_k^{m_2}x^{m_3+2}y^j \nonumber \\
			&=\sum_{0 \leq  n_1 \leq n} \binom{m+n_1-1}{m-1} \sum_{\substack{m_1+m_2=j\\ m_i \geq 0}}\binom{m_2+k-1}{k-1}  \sum_{\substack{a_1+ \dots + a_{m_1+1}=n-n_1\\ a_i \geq 0}} x^{a_1+m+n_1}yx^{a_2}y  \nonumber \\
			& \ \ \ \  \dots x^{a_{m_1+1}}y^{m_2+k}+ \sum_{ 1 \leq k_1 \leq k}\sum_{\substack{m_1+m_2=m-1\\ m_i \geq 0}} \binom{m_1+n-1}{n-1}\sum_{\substack{b_1+ \dots + b_{k_1+1}=m_2\\ b_i \geq 0}}\binom{j+k-k_1}{j} \times \nonumber \\
			& \ \ \ \ \   x^{b_1+n+m_1}yx^{b_2}y \dots x^{b_{k_1}}y x^{b_{m_1+1}+1}y^{j+k-k_1} \nonumber \\
			&-t \sum_{0 \leq  n_1 \leq n} \binom{m+n_1-1}{m-1} \sum_{\substack{m_1+m_2=j\\ m_i \geq 0}}\sum_{\substack{a_1+ \dots + a_{m_1+1}=n-n_1\\ a_i \geq 0}} \sum_{i=\text{min} \{m_2,k-1\}-1}^{m_2+k-3} \bigg\{ \binom{i}{m_2-1}+  \nonumber \\
			& \ \ \ \binom{i}{k-2}\bigg\}x^{a_1+m+n_1}yx^{a_2}y \dots x^{a_{m_1+1}}y^{i+1}xy^{m_2+k-i-2} \nonumber \\
			& -t\sum_{0 \leq  n_1 \leq n} \binom{m+n_1-1}{m-1} \sum_{\substack{j_1+j_2=n-n_1\\ j_i \geq 0}} \sum_{\substack{a_1+ \dots + a_{j}=j_1\\ a_i \geq 0}}x^{a_1+m+n_1}yx^{a_2}y \dots x^{a_{j-1}}yx^{a_j+j_2+1}y^k \nonumber \\
			& -t \sum_{ 1 \leq k_1 \leq k}\sum_{\substack{m_1+m_2=m-1\\ m_i \geq 0}} \binom{m_1+n-1}{n-1} \sum_{i=\text{min} \{j,k-k_1\}-1}^{j+k-k_1-2} \bigg\{ \binom{i}{j-1}+\binom{i}{k-k_1-1}\bigg\} \nonumber \\
			& \sum_{\substack{a_1+ \dots + a_{k_1+1}=m_2\\ a_l \geq 0}} x^{a_1+n+m_1}yx^{a_2}y \dots x^{a_{k_1}}yx^{a_{k_1+1}+1}y^ixy^{j+k-k_1-i-1} \nonumber \\
			& -t \sum_{\substack{m_1+m_2+m_3=m-1\\ m_i \geq 0}} \binom{m_1+n-1}{n-1}\sum_{\substack{a_1+ \dots + a_{k}=m_2\\ a_l \geq 0}}x^{a_1+n+m_1}yx^{a_2}y \dots x^{a_{k-1}}yx^{a_{k}+m_3+2}y^j.
		\end{align}
	Now applying the $\mathbb{Q}[t]$-linear map $Z^t$ on both sides of equation \eqref{eq48}, we obtain the second formula for height one case, that is, Theorem \ref{Tm2}.\\
\textbf	{Note}: If we take $t=0$ in \eqref{eq430}, we get the formula \cite[Theorem 1.1]{5}, which gives the Euler decomposition formula \cite[Corollary 1.2]{5}.\\
\textbf	{Remark:} In \cite{1}, for the case $t=0$, they showed that the formula \eqref{eq39} can be deduced from the formula \eqref{eq430}. Since the terms involved with $t$ has the same coefficients in both the shuffle and $t$-shuffle product,  one can easily deduce Theorem \ref{Tm1} from Theorem \ref{Tm2}. So we can say these two theorems are equivalent.
	\section{Some Applications}
Here we give some applications of $t$-shuffle product formula of interpolated multiple zeta values of height one. First, we obtain an alternating sum of generalized Euler decomposition for interpolated multiple zeta values, and then we deduce some relations among multiple zeta and zeta-star values. 	

\begin{proposition}
	For an even positive integer $k$, we have
	\begin{align}
		&	\sum_{i=0}^{k} {(-1)}^iz_pz_1^i \S z_pz_1^{k-i}\nonumber\\
		&= 2\bigg[2\sum_{d=1}^{\frac{k}{2}-1}{(-1)}^d\binom{k}{d}+{(-1)}^{\frac{k}{2}}\binom{k}{\frac{k}{2}}\bigg]\sum_{\substack{{\a}_1+{\a}_{2}=2(p-1)\\ {\a}_1, {\a}_{2} \geq 0}}\binom{\a_1}{p-1}  z_{\a_1+1}  z_{\a_{2}+1}z_1^{k}+2\bigg[\sum_{l=2}^{\frac{k}{2}+1}\biggl\{\sum_{d=l-1}^{\frac{k}{2}}{(-1)}^d\nonumber \\
		& \ \ \binom{k+1-l}{d+1-l}  +  \sum_{d=1}^{\frac{k}{2}-1}{(-1)}^d\binom{k+1-l}{d}\biggr\}-\sum_{l=\frac{k}{2}+2}^{k}\bigg]\sum_{\substack{{\a}_1+ \dots +{\a}_{l+1}=2(p-1)\\ {\a}_1, \dots , {\a}_{l+1} \geq 0}}\binom{\a_1}{p-1}  \nonumber \\
		&\ \ \times z_{\a_1+1} \dots z_{\a_{l+1}+1}z_1^{k+1-l} - 2t\bigg[\sum_{l=1}^k\sum_{\substack{{\a}_1+ \dots +{\a}_{l+1}=2(p-1)\\ {\a}_1, \dots , {\a}_{l+1} \geq 0}}\binom{\a_1}{p-1}z_{\a_1+1} \dots z_{\a_l+1}z_{\a_{l+1}+2}z_1^{k-l}  \nonumber \\
		& \ \  + \sum_{\substack{{\a}_1+ {\a}_{2}=2(p-1)\\ {\a}_1, {\a}_{2} \geq 0}}\binom{\a_1}{p-1}z_{\a_l+\a_{2}+2}z_1^{k} + \sum_{\substack{{\a}_1+ \dots +{\a}_{k+2}=2(p-1)\\ {\a}_1, \dots , {\a}_{k+2} \geq 0}}\binom{\a_1}{p-1}z_{\a_1+1} \dots z_{\a_k+1}z_{\a_{k+1}+\a_{k+2}+2} \Bigg] \nonumber \\
		& \ \ \  - 2t\Bigg[\sum_{i=1}^{\frac{k}{2}}\{(-1)\}^i\sum_{\substack{{\a}_1+ \dots +{\a}_{i+2}=2(p-1)\\ {\a}_1, \dots , {\a}_{i+2} \geq 0}}\binom{\a_1}{p-1}z_{\a_1+1} \dots z_{\a_i+1}z_{\a_{i+1}+\a_{i+2}+2}z_1^{k-i}\nonumber \\
		& \ \ + \sum_{i=1}^{\frac{k}{2}-1}\{(-1)\}^i\sum_{\substack{{\a}_1+ \dots +{\a}_{k-i+2}=2(p-1)\\ {\a}_1, \dots , {\a}_{k-i+2} \geq 0}}\binom{\a_1}{p-1}z_{\a_1+1} \dots z_{\a_{k-i}+1}z_{\a_{k-i+1}+\a_{k-i+2}+2}z_1^{i}\Bigg]\nonumber \\
		& \ \ -2t\Bigg[\Bigg\{\sum_{l=1}^{k-1}\{-1+{(-1)}^l\}-\delta_{k,2}\sum_{l=1}^{\frac{k}{2}}      \Bigg\}\sum_{\substack{{\a}_1+ \dots +{\a}_{l+1}=2(p-1)\\ {\a}_1, \dots , {\a}_{l+1} \geq 0}}\binom{\a_1}{p-1} z_{\a_1+1} \dots z_{\a_{l+1}+1}z_2z_1^{k-l-1}\Bigg]. 
	\end{align}
 and for an odd $k$ 
 \begin{align*}
 	&	\sum_{i=0}^{k} {(-1)}^iz_pz_1^i \S z_pz_1^{k-i}=0.\nonumber\\
\end{align*}
\end{proposition}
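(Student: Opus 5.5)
The plan is to reduce everything to the height-one $t$-shuffle formula, Proposition~\ref{j1}, with $a=b=p-1$, $r=i+1$, $s=k-i+1$. Since $z_pz_1^i=x^{p-1}y^{i+1}$, each summand $z_pz_1^i\S z_pz_1^{k-i}$ is exactly $x^{p-1}y^{i+1}\S x^{p-1}y^{k-i+1}$. The weight constraint in Proposition~\ref{j1} becomes ${\a}_1+\dots={\a}_{\cdot}=2(p-1)$, and the binomial factors become $\binom{{\a}_1}{p-1}$.

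\emph{The odd case.} Here $k$ is odd and I use only commutativity of $\S$ on $\hh_t$. The substitution $i\mapsto k-i$ sends the summand $(-1)^i z_pz_1^i\S z_pz_1^{k-i}$ to $(-1)^{k-i}z_pz_1^{k-i}\S z_pz_1^{i}$. This equals $-(-1)^i z_pz_1^i\S z_pz_1^{k-i}$. So the sum equals its own negative and vanishes.

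\emph{The even case.} The same pairing shows that the terms $i$ and $k-i$ coincide, with the middle term $i=k/2$ unpaired. Hence the sum is
$$2\sum_{i=0}^{k/2-1}(-1)^i\,x^{p-1}y^{i+1}\S x^{p-1}y^{k-i+1}+(-1)^{k/2}\,x^{p-1}y^{k/2+1}\S x^{p-1}y^{k/2+1}.$$
This explains the global factor $2$. The middle term, which is symmetric in its two factors, contributes half of a doubled symmetric expression, and this is where the factor $(-1)^{k/2}\binom{k}{k/2}$ enters. I would then expand each product by Proposition~\ref{j1} and collect terms by word shape.

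\emph{Collecting the $t$-free part.} Using Proposition~\ref{j1}, the $t$-free part is
$$\sum_{l}\binom{{\a}_1}{p-1}\left[\binom{k+1-l}{r-l}+\binom{k+1-l}{s-l}\right]z_{{\a}_1+1}\cdots z_{{\a}_{l+1}+1}z_1^{k+1-l},$$
where the Kronecker deltas force ${\a}_j=0$ for $j\ge l+2$. I then sum over $i$ with the sign $(-1)^i$ and group by $l$.
\begin{itemize}
\item For $l=1$ the alternating sum of $\binom{k}{i}$-type coefficients gives the first bracket, $2\sum_{d=1}^{k/2-1}(-1)^d\binom{k}{d}+(-1)^{k/2}\binom{k}{k/2}$.
\item For $2\le l\le k/2+1$ one obtains the two partial alternating sums $\sum_d(-1)^d\binom{k+1-l}{d+1-l}$ and $\sum_d(-1)^d\binom{k+1-l}{d}$.
\item For larger $l$ only one of $r,s$ is $\ge l$. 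There the alternating sums should telescope to $-1$ after the doubling.
\end{itemize}

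\emph{Collecting the $t$-terms.} The $t$-terms split according to the four families in Proposition~\ref{j1}.
\begin{itemize}
\item The terms $z\cdots z_{{\a}_r+{\a}_{r+1}+2}z_1^{s-1}$ and their $(r,s)$-swapped analogues give the two alternating sums over $i$ in the second $-2t$ bracket. The endpoint values $r=1$ or $s=1$ give the first $-2t$ bracket.
\item The $\delta_{s,1}$ and $\delta_{r,1}$ terms occur only for $i=0$ (respectively $i=k$). They produce the sum $\sum_{l=1}^{k}\cdots z_{{\a}_{l+1}+2}z_1^{k-l}$.
\item The terms containing an interior $x$ come from $y^m\S y^n$ via Lemma~\ref{l1}, and become $z_2$ after applying the correspondence.
\end{itemize}

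\emph{Main obstacle.} The hardest step is the last family: the coefficients $\binom{i}{r-l-1}+\binom{i}{s-2}$ summed against $(-1)^i$ and over the position of the $x$. To make the stated collapse to $\sum_l\{-1+(-1)^l\}$ (with the $\delta_{k,2}$ correction) appear, I would first fix the word $z_{{\a}_1+1}\cdots z_{{\a}_{l+1}+1}z_1^{j}z_2z_1^{k-l-1-j}$. I would then compute its total signed coefficient using the alternating identity $\sum_i(-1)^i\binom{N}{i}=0$ together with Pascal's rule. I expect the dependence on $j$ to cancel except at boundary positions. Checking small cases $k=2,4$ by hand would guide this bookkeeping.
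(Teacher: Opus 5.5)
Your route is the same as the paper's: pair $i\leftrightarrow k-i$ by commutativity, expand each $x^{p-1}y^{i+1}\S x^{p-1}y^{k-i+1}$ by Proposition~\ref{j1}, and collect by $l$. Your odd case is correct.

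The gap is in the even-case collection. The values you say the alternating sums ``give'' are not what they actually give, and the displayed identity is in fact false. Put $W_l=\sum_{\a_1+\dots+\a_{l+1}=2(p-1)}\binom{\a_1}{p-1}z_{\a_1+1}\cdots z_{\a_{l+1}+1}z_1^{k+1-l}$. By your own formula, and with $\binom{n}{m}=0$ for $m<0$ or $m>n$, the coefficient of $W_l$ in the $t$-free part is
\[
\sum_{i=0}^{k}(-1)^i\left[\binom{k+1-l}{i+1-l}+\binom{k+1-l}{i}\right].
\]
This is a pair of complete alternating binomial sums. Hence it is $0$ for $1\le l\le k$ and $1+(-1)^k=2$ for $l=k+1$, so the $t$-free part is $2W_{k+1}$. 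For $p=2$ this is exactly the $B$ the paper uses in its Corollary.

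In particular, the $l=1$ coefficient is $2\sum_{i=0}^{k}(-1)^i\binom{k}{i}=0$, not $2\bigl[2\sum_{d=1}^{k/2-1}(-1)^d\binom{k}{d}+(-1)^{k/2}\binom{k}{k/2}\bigr]=-4$. For $k/2+2\le l\le k$ nothing telescopes to $-1$; the sum is $0$. The displayed coefficients ($-4$ at $l=1$, $-2$ for $2\le l\le k$, nothing at $l=k+1$) are exactly what you get by discarding the $i=0$ and $i=k$ summands $z_p\S z_pz_1^k$. That is what the paper's computation of $B$ does, since its sum starts at $i=1$.

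Concretely, take $k=2$, $p=2$, $t=0$. Then $2\,xy\SH xy^3-xy^2\SH xy^2=4z_3z_1^3+2z_2^2z_1^2+2z_2z_1z_2z_1+2z_2z_1^2z_2$. The displayed right side is instead $-4W_1-2W_2=-12z_3z_1^3-6z_2^2z_1^2-2z_2z_1z_2z_1$.

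The $\delta_{k,2}$ correction that your ``main obstacle'' step is meant to produce is also not there. For $k=2$, $p=2$, enumerate the $15$ shuffles of $xy$ with $xy^3$ and the $20$ of $xy^2$ with $xy^2$. Each shuffle contributes one extra word: replace the final $y$ of the word exhausted first by $-tx$. This gives the $t$-part $-t(6z_4z_1^2+6z_2z_1z_3-2z_2z_3z_1)$, which is the value of $A$ the paper itself records for $k=2$. Matching it requires the coefficient of $\sum_{\a_1+\a_2=2}\a_1z_{\a_1+1}z_{\a_2+1}z_2=z_2^3+2z_3z_1z_2$ in the last bracket to be $-2=\sum_{l=1}^{1}\{-1+(-1)^l\}$. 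The $-\delta_{k,2}\sum_{l=1}^{k/2}$ term makes it $-3$ and adds a spurious $2t(z_2^3+2z_3z_1z_2)$.

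So no bookkeeping along your plan can reach the statement as written. Carried out correctly, keeping the $i=0$ summand, your method proves a corrected identity with $t$-free part $2W_{k+1}$. The $t$-part also needs rechecking, at least the $\delta_{k,2}$ term. You should state and prove that corrected identity rather than aim at the displayed coefficients.
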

\begin{proof}
For odd $k$, we have
\begin{align*}
	& \sum_{i=0}^{k} {(-1)}^iz_pz_1^i \S z_pz_1^{k-i}\nonumber \\
	&=z_p \S z_p{z_1}^k-z_p{z_1} \S z_p{z_1}^{k-1}+ z_p{z_1}^2 \S z_p{z_1}^{k-2}- \dots +{(-1)}^{\frac{k-1}{2}}z_p{z_1}^{\frac{k-1}{2}} \S z_p{z_1}^{\frac{k+1}{2}} \\
	& \ \ \ + {(-1)}^{\frac{k+1}{2}}{z_p}^{\frac{k+1}{2}} \S {z_p}^{\frac{k-1}{2}}+ \dots -z_p{z_1}^{k-2} \S z_p{z_1}^{2}+z_p{z_1}^{k-1} \S z_pz_1-z_p{z_1}^k\S z_p\\
	&=0. 
\end{align*}
Suppose $k$ is even. Then
we have,
\begin{align*}
	&\sum_{i=0}^{k} {(-1)}^iz_pz_1^i \S z_pz_1^{k-i}\\
	&=2 \sum_{i=0}^{\frac{k}{2}-1} {(-1)}^i z_p{z_1}^i \S z_pz_1^{k-i}+ {(-1)}^\frac{k}{2} z_pz_1^\frac{k}{2} \S z_pz_1^{\frac{k}{2}}. 
\end{align*}
By Proposition \ref{j1}, we have
\begin{align}
	\label{j2}
	&\sum_{i=0}^{k} {(-1)}^iz_pz_1^i \S z_pz_1^{k-i}=\sum_{i=0}^{k} {(-1)}^iz_pz_1^i \SH z_pz_1^{k-i}-At=B-At.
\end{align}
where 
\begin{align*}
	&B=2\sum_{i=1}^{\frac{k}{2}-1}\{(-1)\}^i\Biggl\{\sum_{l=1}^{i+1}\binom{k+1-l}{i+1-l} +\sum_{l=1}^{k-i+1}\binom{k+1-l}{k-i-l+1}\Biggr\}\sum_{\substack{{\a}_1+ \dots +{\a}_{l+1}=2(p-1)\\ {\a}_1, \dots , {\a}_{l+1} \geq 0}}\binom{\a_1}{p-1}\nonumber \\
	& \ \ \times z_{\a_1+1} \dots z_{\a_{l+1}+1}z_1^{k+1-l} +2{(-1)}^{\frac{k}{2}}\sum_{l=1}^{\frac{k}{2}+1}\binom{k+1-l}{\frac{k}{2}+1-l}\sum_{\substack{{\a}_1+ \dots +{\a}_{l+1}=2(p-1)\\ {\a}_1, \dots , {\a}_{l+1} \geq 0}}\binom{\a_1}{p-1}\nonumber \\
	& \ \ \times z_{\a_1+1} \dots z_{\a_{l+1}+1}z_1^{k+1-l}\nonumber \\
	&=2\Biggl\{\sum_{i=1}^{\frac{k}{2}}{(-1)}^i\sum_{l=1}^{i+1}\binom{k+1-l}{i+1-l} +\sum_{i=1}^{\frac{k}{2}-1}\{(-1)\}^i\sum_{l=1}^{k-i+1}\binom{k+1-l}{i}\Biggr\}\sum_{\substack{{\a}_1+ \dots +{\a}_{l+1}=2(p-1)\\ {\a}_1, \dots , {\a}_{l+1} \geq 0}}\binom{\a_1}{p-1}\nonumber \\
	& \ \ \times z_{\a_1+1} \dots z_{\a_{l+1}+1}z_1^{k+1-l} \nonumber \\
	&= 2\sum_{d=1}^{\frac{k}{2}}{(-1)}^d\binom{k}{d}\sum_{\substack{{\a}_1+{\a}_{2}=2(p-1)\\ {\a}_1, {\a}_{2} \geq 0}}\binom{\a_1}{p-1}  z_{\a_1+1}  z_{\a_{2}+1}z_1^{k}
	+2\bigg[\sum_{l=2}^{\frac{k}{2}+1}\sum_{d=l-1}^{\frac{k}{2}}{(-1)}^d\binom{k+1-l}{d+1-l}\nonumber \\
	& \ \  + \biggl\{ \sum_{l=1}^{\frac{k}{2}+1}\sum_{d=1}^{\frac{k}{2}-1}{(-1)}^d\binom{k+1-l}{d}+\sum_{l=\frac{k}{2}+2}^{k}\sum_{d=1}^{k-l+1}{(-1)}^d\binom{k+1-l}{d}\Biggr\}\bigg]\sum_{\substack{{\a}_1+ \dots +{\a}_{l+1}=2(p-1)\\ {\a}_1, \dots , {\a}_{l+1} \geq 0}}\binom{\a_1}{p-1} \nonumber \\
	& \ \  \times z_{\a_1+1} \dots z_{\a_{l+1}+1}z_1^{k+1-l}\nonumber \\
\end{align*}
\begin{align*}
	&=2\bigg[\sum_{d=1}^{\frac{k}{2}}{(-1)}^d\binom{k}{d}+\sum_{d=1}^{\frac{k}{2}-1}{(-1)}^d\binom{k}{d}\bigg]\sum_{\substack{{\a}_1+{\a}_{2}=2(p-1)\\ {\a}_1, {\a}_{2} \geq 0}}\binom{\a_1}{p-1}  z_{\a_1+1}  z_{\a_{2}+1}z_1^{k}\nonumber \\
	& \ \ + 2\bigg[\sum_{l=2}^{\frac{k}{2}+1}\biggl\{\sum_{d=l-1}^{\frac{k}{2}}{(-1)}^d\binom{k+1-l}{d+1-l}  +  \sum_{d=1}^{\frac{k}{2}-1}{(-1)}^d\binom{k+1-l}{d}\biggr\}-\sum_{l=\frac{k}{2}+2}^{k}\bigg]\sum_{\substack{{\a}_1+ \dots +{\a}_{l+1}=2(p-1)\\ {\a}_1, \dots , {\a}_{l+1} \geq 0}}\binom{\a_1}{p-1} \nonumber \\
	& \ \  \times z_{\a_1+1} \dots z_{\a_{l+1}+1}z_1^{k+1-l}, \nonumber \\
	 &A=2\bigg[\sum_{l=1}^k\sum_{\substack{{\a}_1+ \dots +{\a}_{l+1}=2(p-1)\\ {\a}_1, \dots , {\a}_{l+1} \geq 0}}\binom{\a_1}{p-1}z_{\a_1+1} \dots z_{\a_l+1}z_{\a_{l+1}+2}z_1^{k-l} + \sum_{\substack{{\a}_1+ {\a}_{2}=2(p-1)\\ {\a}_1, {\a}_{2} \geq 0}}\binom{\a_1}{p-1}z_{\a_l+\a_{2}+2}z_1^{k} \nonumber \\
	 & \ \  + \sum_{\substack{{\a}_1+ \dots +{\a}_{k+2}=2(p-1)\\ {\a}_1, \dots , {\a}_{k+2} \geq 0}}\binom{\a_1}{p-1}z_{\a_1+1} \dots z_{\a_k+1}z_{\a_{k+1}+\a_{k+2}+2} \Bigg] + 2\Bigg[\sum_{i=1}^{\frac{k}{2}-1}{(-1)}^i\Biggl\{\Biggl[\sum_{l=1}^{i}\Biggl\{\sum_{j=i-l}^{k-l-1}\binom{j}{i-l}\nonumber \\
	 & \ \ +\sum_{j=k-i-1}^{k-l-1}\binom{j}{k-i-1}\Biggr\}+ \sum_{l=1}^{k-i}\Biggl\{\sum_{j=i-1}^{k-l-1}\binom{j}{i-1}+\sum_{j=k-i-l}^{k-l-1}\binom{j}{k-i-l}\Biggr\}\Biggr]\sum_{\substack{{\a}_1+ \dots +{\a}_{l+1}=2(p-1)\\ {\a}_1, \dots , {\a}_{l+1} \geq 0}}\binom{\a_1}{p-1}\nonumber \\
	 & \ \ \times z_{\a_1+1} \dots z_{\a_{l+1}+1}z_1^{j}z_2z_1^{k-l-j-1}+\sum_{\substack{{\a}_1+ \dots +{\a}_{i+2}=2(p-1)\\ {\a}_1, \dots , {\a}_{i+2} \geq 0}}\binom{\a_1}{p-1}z_{\a_1+1} \dots z_{\a_i+1}z_{\a_{i+1}+\a_{i+2}+2}z_1^{k-i}\nonumber \\
	 & \ \ + \sum_{\substack{{\a}_1+ \dots +{\a}_{k-i+2}=2(p-1)\\ {\a}_1, \dots , {\a}_{k-i+2} \geq 0}}\binom{\a_1}{p-1}z_{\a_1+1} \dots z_{\a_{k-i}+1}z_{\a_{k-i+1}+\a_{k-i+2}+2}z_1^{i}\Biggr\}+{(-1)}^{\frac{k}{2}}\sum_{l=1}^{\frac{k}{2}}\nonumber \\
	 & \ \ \times \Biggl\{\sum_{j=\frac{k}{2}-l}^{k-l-1}\binom{j}{\frac{k}{2}-l}+\sum_{j=\frac{k}{2}-1}^{k-l-1}\binom{j}{\frac{k}{2}-1}\Biggr\}\sum_{\substack{{\a}_1+ \dots +{\a}_{l+1}=2(p-1)\\ {\a}_1, \dots , {\a}_{l+1} \geq 0}}\binom{\a_1}{p-1} z_{\a_1+1} \dots z_{\a_{l+1}+1}z_1^{j}z_2z_1^{k-l-j-1}\nonumber \\
	 & \ \ +\{(-1)\}^{\frac{k}{2}}\sum_{\substack{{\a}_1+ \dots +{\a}_{\frac{k}{2}+2}=2(p-1)\\ {\a}_1, \dots , {\a}_{\frac{k}{2}+2} \geq 0}}\binom{\a_1}{p-1} z_{\a_1+1} \dots z_{\a_{\frac{k}{2}}+1}z_{\a_{\frac{k}{2}+1}+\a_{\frac{k}{2}+2}+2}z_1^{\frac{k}{2}}\Bigg]\nonumber \\
	&=U_1+U_2+2\Bigg[\sum_{i=1}^{\frac{k}{2}}\{(-1)\}^i\sum_{l=1}^{i}\Biggl\{\sum_{j=i-l}^{k-l-1}\binom{j}{i-l} +\sum_{j=k-i-1}^{k-l-1}\binom{j}{k-i-1}\Biggr\}  \nonumber \\
	& \ \ + \sum_{i=1}^{\frac{k}{2}-1}\{(-1)\}^i \sum_{l=1}^{k-i}\Biggl\{\sum_{j=i-1}^{k-l-1}\binom{j}{i-1}+\sum_{j=k-i-l}^{k-l-1}\binom{j}{k-i-l}\Biggr\}\Biggr]\sum_{\substack{{\a}_1+ \dots +{\a}_{l+1}=2(p-1)\\ {\a}_1, \dots , {\a}_{l+1} \geq 0}}\binom{\a_1}{p-1}\nonumber \\
	& \ \ \times z_{\a_1+1} \dots z_{\a_l+1}z_{\a_{l+1}+1}z_1^{j}z_2z_1^{k-l-j-1},
\end{align*}
where 
\begin{align*}
	&U_1=2\bigg[\sum_{l=1}^k\sum_{\substack{{\a}_1+ \dots +{\a}_{l+1}=2(p-1)\\ {\a}_1, \dots , {\a}_{l+1} \geq 0}}\binom{\a_1}{p-1}z_{\a_1+1} \dots z_{\a_l+1}z_{\a_{l+1}+2}z_1^{k-l} + \sum_{\substack{{\a}_1+ {\a}_{2}=2(p-1)\\ {\a}_1, {\a}_{2} \geq 0}}\binom{\a_1}{p-1}z_{\a_l+\a_{2}+2}z_1^{k} 
\end{align*}
 \begin{align*}
	& \ \  + \sum_{\substack{{\a}_1+ \dots +{\a}_{k+2}=2(p-1)\\ {\a}_1, \dots , {\a}_{k+2} \geq 0}}\binom{\a_1}{p-1}z_{\a_1+1} \dots z_{\a_k+1}z_{\a_{k+1}+\a_{k+2}+2} \Bigg], \nonumber \\
	&U_2=2\Bigg[\sum_{i=1}^{\frac{k}{2}}\{(-1)\}^i\sum_{\substack{{\a}_1+ \dots +{\a}_{i+2}=2(p-1)\\ {\a}_1, \dots , {\a}_{i+2} \geq 0}}\binom{\a_1}{p-1}z_{\a_1+1} \dots z_{\a_i+1}z_{\a_{i+1}+\a_{i+2}+2}z_1^{k-i}\nonumber \\
	& \ \ + \sum_{i=1}^{\frac{k}{2}-1}\{(-1)\}^i\sum_{\substack{{\a}_1+ \dots +{\a}_{k-i+2}=2(p-1)\\ {\a}_1, \dots , {\a}_{k-i+2} \geq 0}}\binom{\a_1}{p-1}z_{\a_1+1} \dots z_{\a_{k-i}+1}z_{\a_{k-i+1}+\a_{k-i+2}+2}z_1^{i}\Bigg].
\end{align*}
Now we have
\begin{align*}
	&A= U_1+U_2+2\Bigg[\sum_{l=1}^{\frac{k}{2}}\Biggl\{\sum_{d=l}^{\frac{k}{2}}{(-1)}^d\Biggl\{\sum_{j=d-l}^{k-l-1}\binom{j}{d-l} +\sum_{j=k-d-1}^{k-l-1}\binom{j}{k-d-1}\Biggr\}  \nonumber \\
	& \ \ + \sum_{d=l}^{\frac{k}{2}-1}{(-1)}^d \Biggl\{\sum_{j=d-1}^{k-l-1}\binom{j}{d-1}+\sum_{j=k-i-l}^{k-l-1}\binom{j}{k-d-l}\Biggr\}\Biggr\}+\sum_{l=\frac{k}{2}+1}^{k-1}\sum_{d=l}^{k-1}{(-1)}^d\Biggl\{\sum_{j=d-l}^{k-l-1}\binom{j}{d-l} \nonumber \\
	& \ \  +\sum_{j=k-d-1}^{k-l-1}\binom{j}{k-d-1}\Biggr\}\Biggr]\sum_{\substack{{\a}_1+ \dots +{\a}_{l+1}=2(p-1)\\ {\a}_1, \dots , {\a}_{l+1} \geq 0}}\binom{\a_1}{p-1}\times z_{\a_1+1} \dots z_{\a_{l+1}+1}z_1^{j}z_2z_1^{k-l-j-1} \nonumber \\
	&=U_1+U_2+2\Bigg[\sum_{l=1}^{\frac{k}{2}}\Biggl\{ {(-1)}^l \Biggl\{ \sum_{d=0}^{\frac{k}{2}-l}\sum_{j=0}^{d}{(-1)}^j\binom{d}{j}+\sum_{d=\frac{k}{2}-l+1}^{k-l-1}\sum_{j=0}^{\frac{k}{2}-l}{(-1)}^j\binom{d}{j}+\sum_{d=\frac{k}{2}-1}^{k-l-1}\sum_{j=\frac{k}{2}-1}^{d}{(-1)}^{j+1}\binom{d}{j}\Biggr\} \nonumber  \\
	& \ \ + \sum_{d=0}^{\frac{k}{2}-2} \sum_{j=0}^{d}{(-1)}^{j+1}\binom{d}{j}+\sum_{d=\frac{k}{2}-1}^{k-l-1}\sum_{j=0}^{\frac{k}{2}-2}{(-1)}^{j+1}\binom{d}{j}+\sum_{d=\frac{k}{2}-l+1}^{k-l-1}{(-1)}^{l}\sum_{j=\frac{k}{2}-l+1}^{d}{(-1)}^{j}\binom{d}{j}\Biggr\} \nonumber \\
	& \ \ +\sum_{l=\frac{k}{2}+1}^{k-1}\Biggl\{\sum_{d=0}^{k-l-1}\sum_{j=0}^{d}{(-1)}^{j+1}\binom{d}{j}+{(-1)}^{l}\sum_{d=0}^{k-l-1}\sum_{j=0}^{d}{(-1)}^{j}\binom{d}{j}\Biggr\}\Biggr]\nonumber \\
	& \ \ \times \sum_{\substack{{\a}_1+ \dots +{\a}_{l+1}=2(p-1)\\ {\a}_1, \dots , {\a}_{l+1} \geq 0}}\binom{\a_1}{p-1} z_{\a_1+1} \dots z_{\a_l+1}z_{\a_{l+1}+1}z_1^{j}z_2z_1^{k-l-j-1}\nonumber\\
	&=U_1+U_2+2\Bigg[\sum_{l=1}^{\frac{k}{2}}\Biggl\{ {(-1)}^l \Biggl\{ \sum_{j=0}^{0}{(-1)}^j\binom{0}{j}+\sum_{d=\frac{k}{2}-l+1}^{k-l-1}\sum_{j=0}^{\frac{k}{2}-l}{(-1)}^j\binom{d}{j}+\sum_{d=\frac{k}{2}-1}^{k-l-1}\sum_{j=\frac{k}{2}-1}^{d}{(-1)}^{j+1}\binom{d}{j}\Biggr\} \nonumber \\
	& \ \ +  \sum_{j=0}^{0}{(-1)}^{j+1}\binom{0}{j}+\sum_{d=\frac{k}{2}-1}^{k-l-1}\sum_{j=0}^{\frac{k}{2}-2}{(-1)}^{j+1}\binom{d}{j}+\sum_{d=\frac{k}{2}-l+1}^{k-l-1}{(-1)}^{l}\sum_{j=\frac{k}{2}-l+1}^{d}{(-1)}^{j}\binom{d}{j}\Biggr\} \nonumber \\
	& \ \ +\sum_{l=\frac{k}{2}+1}^{k-1}\Biggl\{\sum_{j=0}^{0}{(-1)}^{j+1}\binom{0}{j}+{(-1)}^{l}\sum_{j=0}^{0}{(-1)}^{j}\binom{0}{j}\Biggr\}\Biggr]\nonumber \\
\end{align*}
\begin{align*}
	& \ \ \times \sum_{\substack{{\a}_1+ \dots +{\a}_{l+1}=2(p-1)\\ {\a}_1, \dots , {\a}_{l+1} \geq 0}}\binom{\a_1}{p-1} z_{\a_1+1} \dots z_{\a_l+1}z_{\a_{l+1}+1}z_1^{j}z_2z_1^{k-l-j-1}\nonumber\\
	&=U_1+U_2+2\Bigg[\Biggl(\sum_{l=1}^{\frac{k}{2}}+\sum_{l=\frac{k}{2}+1}^{k-1}\Biggr)\{-1+{(-1)}^l\}\sum_{\substack{{\a}_1+ \dots +{\a}_{l+1}=2(p-1)\\ {\a}_1, \dots , {\a}_{l+1} \geq 0}}\binom{\a_1}{p-1} z_{\a_1+1} \dots z_{\a_{l+1}+1}z_2z_1^{k-l-1}\Bigg]\nonumber \\
	& \ \ + 2\sum_{l=1}^{\frac{k}{2}}\Bigg[{(-1)}^{l}\sum_{d=\frac{k}{2}-l+1}^{k-l-1}\sum_{j=0}^{d}{(-1)}^{j}\binom{d}{j}+\sum_{d=\frac{k}{2}-1}^{k-l-1}\sum_{j=0}^{d}{(-1)}^{j+1}\binom{d}{j}\Bigg]\sum_{\substack{{\a}_1+ \dots +{\a}_{l+1}=2(p-1)\\ {\a}_1, \dots , {\a}_{l+1} \geq 0}}\binom{\a_1}{p-1} \nonumber \\
	& \ \ \times  z_{\a_1+1} \dots z_{\a_{l+1}+1}z_1^{j}z_2z_1^{k-l-j-1}\nonumber\\
	& = U_1+U_2+2\Bigg[\sum_{l=1}^{k-1}\{-1+{(-1)}^l\}\sum_{\substack{{\a}_1+ \dots +{\a}_{l+1}=2(p-1)\\ {\a}_1, \dots , {\a}_{l+1} \geq 0}}\binom{\a_1}{p-1} z_{\a_1+1} \dots z_{\a_{l+1}+1}z_2z_1^{k-l-1}\Bigg]\nonumber \\
	& \ \ +2\sum_{l=1}^{\frac{k}{2}}\Bigg[\sum_{j=0}^{\frac{k}{2}-1}{(-1)}^{j+1}\binom{\frac{k}{2}-1}{j}\Bigg]\sum_{\substack{{\a}_1+ \dots +{\a}_{l+1}=2(p-1)\\ {\a}_1, \dots , {\a}_{l+1} \geq 0}}\binom{\a_1}{p-1}z_{\a_1+1} \dots z_{\a_{l+1}+1}z_1^{j}z_2z_1^{k-l-j-1}\nonumber\\
	&=U_1+U_2+2\Bigg[\Bigg\{\sum_{l=1}^{k-1}\{-1+{(-1)}^l\}-\delta_{k,2}\sum_{l=1}^{\frac{k}{2}}\Bigg\}\sum_{\substack{{\a}_1+ \dots +{\a}_{l+1}=2(p-1)\\ {\a}_1, \dots , {\a}_{l+1} \geq 0}}\binom{\a_1}{p-1} z_{\a_1+1} \dots z_{\a_{l+1}+1}z_2z_1^{k-l-1}\Bigg].
\end{align*}
Putting the values of $A, B$ in \eqref{j2} we get the result.	
 	\end{proof}
 In the case $p=2$, we get the following result.
\begin{corollary}
	For any positive integer $k$, we have
	\begin{align}
		\label{j3}
		&	\sum_{j=0}^{k} {(-1)}^jz_2z_1^j \S z_2z_1^{k-j}\nonumber\\ &  = 
		\begin{cases}
			0& \text{if $k$ is odd}\\
			2\bigg(\sum_{\substack{{a}_1+ \dots +{a}_{k+2}=1\\ {a}_1, \dots , {a}_{k+2} \geq 0}} (a_{k+2}+1)z_{a_{k+2}+2}z_{a_1+1} \dots z_{a_{k+1}+1}\\
			 +t\sum_{i=0}^{k-1}\{2{(-1)}^i-1\}z_2z_1^iz_3z_1^{k-i-1}-3tz_4z_1^k\bigg) & \text{if $k$ is even}. 
		\end{cases}
	\end{align}
\end{corollary}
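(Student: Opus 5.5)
For odd $k$ there is nothing new to prove. The map $i\mapsto k-i$ pairs the term $z_2z_1^i\S z_2z_1^{k-i}$ with $z_2z_1^{k-i}\S z_2z_1^{i}$. Since $k$ is odd, $(-1)^{k-i}=-(-1)^i$, and $\S$ is commutative, so the two terms cancel. This is exactly the odd case of the preceding Proposition, and I would simply cite it.

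For even $k$ I would specialize the preceding Proposition to $p=2$ and simplify each bracket separately.

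\textbf{Setup.} With $p=2$ the inner sums run over $\a_1+\dots+\a_{l+1}=2$, weighted by $\binom{\a_1}{1}=\a_1$. So only $\a_1\in\{1,2\}$ contributes:
\begin{itemize}
\item $\a_1=2$: the remaining $\a$'s vanish, with weight $2$;
\item $\a_1=1$: exactly one later $\a_j$ equals $1$, with weight $1$.
\end{itemize}
Every $t$-free word therefore becomes either $z_3z_1^{k+1}$ or $z_2z_1^{j}z_2z_1^{k-j}$.

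\textbf{The $t$-free part $B$.} I would collect the coefficient of each such word from $B$. The alternating binomial sums in $B$ simplify through
\[
\sum_{d=0}^{D}(-1)^d\binom{N}{d}=(-1)^D\binom{N-1}{D}
\]
together with the symmetry $\binom{k}{d}=\binom{k}{k-d}$, which gives
\[
2\sum_{d=1}^{k/2-1}(-1)^d\binom{k}{d}+(-1)^{k/2}\binom{k}{k/2}=-2.
\]
I would then check that the result matches the single sum over $a_1+\dots+a_{k+2}=1$ weighted by $(a_{k+2}+1)$, read as words in which the leading $z_{a_{k+2}+2}$ absorbs the weight. This is an identity in the shuffle algebra, and the $t=0$ case must reproduce the known Euler-type result. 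I would verify it against small $k$, $k=2$ and $k=4$, to fix signs and indices.

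\textbf{The $t$-part.} Substituting $p=2$ into the four $t$-brackets produces three kinds of word.
\begin{itemize}
\item Words $z_{\a_1+1}\cdots z_{\a_l+1}z_{\a_{l+1}+2}z_1^{k-l}$: with $\a_1\geq1$ these become $z_3\cdots$, $z_4\cdots$ and $z_2\cdots z_3\cdots$.
\item The merged-last-pair terms: they produce $z_4z_1^k$ and $z_2z_1^iz_3z_1^{k-i-1}$ type words.
\item The $z_2z_1^{k-l-1}$ terms from the last bracket: with coefficient $-1+(-1)^l$, and $\a_1$ forced, they give $z_2z_1^{l-1}z_3\cdots$ or $z_3z_1^{l}z_2\cdots$.
\end{itemize}
I would group everything by final word. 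The expected net result is $\sum_{i=0}^{k-1}(2(-1)^i-1)z_2z_1^iz_3z_1^{k-i-1}$ and $-3z_4z_1^k$, with all other words cancelling; in particular the $z_3$-leading words should cancel between $U_1$, $U_2$ and the last bracket.

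\textbf{Main obstacle.} This cancellation bookkeeping is the hard part, including the $\delta_{k,2}$ correction. To keep it under control I would first compute $k=2$ directly from Proposition \ref{j1}, which gives a reliable template. Then I would carry out the general grouping, tracking the sign $(-1)^i$ that arises from the alternating index $i$ versus the position $l$.
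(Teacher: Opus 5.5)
\textbf{Verdict: genuine gap.} Your odd case, and the outline of your even case, follow the paper's route: specialize the preceding Proposition at $p=2$, do $k=2$ by hand, then regroup the $t$-terms by word. But the even case is never carried out; both halves stop at ``I would check'' and ``the expected net result is''.

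For the $t$-free part you need no bookkeeping. $B=\sum_{j}(-1)^jz_2z_1^j\SH z_2z_1^{k-j}$ is exactly the $t=0$ identity, which the paper quotes from its reference [13, Thm.~2.2]. Checking $k=2,4$ does not prove it for every even $k$.

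For the $t$-part, the cancellation you postpone is the whole content of the corollary, and your description of what must cancel is inaccurate. With weight $\binom{\a_1}{1}=\a_1$ and $\sum\a_i=2$:
\begin{itemize}
\item the words $z_{\a_1+1}\cdots z_{\a_l+1}z_{\a_{l+1}+2}z_1^{k-l}$ never produce $z_4$;
\item in the last bracket $\a_1$ is not forced, and no word $z_2z_1^{l-1}z_3\cdots$ occurs;
\item whenever $\a_1=1$ and the other unit of $\a$ sits in an interior slot, the brackets produce words with three $z_2$'s (and otherwise only $z_1$'s).
\end{itemize}
You never account for the three-$z_2$ words, although they must cancel too. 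So the grouping, as you set it up, would not close.

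A short way to supply the missing argument avoids the preceding Proposition altogether. For words $u,v$ ending in $y$, Lemma~\ref{lemma4} with $k=m$ (splitting its middle term by the position of $\rho(b_n)$) reads
\[
u\S v=u\SH v-t\sum_{\substack{v=v_1v_2\\ v_2\neq 1}}(u^-\SH v_1)\,x\,v_2-t\sum_{\substack{u=u_1u_2\\ u_2\neq 1}}(v^-\SH u_1)\,x\,u_2,
\]
where $u^-$ is $u$ with its last letter deleted. Let $C(r,s)$ be the first correction sum for $u=xy^r$, $v=xy^s$. For even $k$, the symmetry $j\leftrightarrow k-j$ turns the $t$-part into $-2t\sum_{j=0}^{k}(-1)^jC(j+1,k+1-j)$.

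Counting directly, with all exponents $\ge 1$, the coefficients in $C(r,s)$ are:
\begin{itemize}
\item $3\delta_{r,1}$ for $x^3y^{k+1}$;
\item $2\binom{a}{r-1}$ for $x^2y^axy^b$;
\item $\delta_{r,1}+2\delta_{r,a+1}$ for $xy^ax^2y^b$;
\item $\binom{b}{r-1}+\binom{b}{a+b+1-r}$ for $xy^axy^bxy^c$.
\end{itemize}
Summing against $(-1)^{r-1}$ kills the second and fourth types, since these are complete alternating binomial sums ($1\le a,b\le k$). What remains is
\[
3z_4z_1^k+\sum_{a=1}^{k}\bigl(1+2(-1)^a\bigr)z_2z_1^{a-1}z_3z_1^{k-a},
\]
and multiplied by $-2t$ this is exactly the claimed $t$-part.

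The same count in $xy^r\SH xy^s$ gives coefficient $2\binom{r+s}{r}$ for $x^2y^{r+s}$ and $\binom{b}{r}+\binom{b}{s}$ for $xy^axy^b$. This yields $B=4z_3z_1^{k+1}+2\sum_{j=0}^{k}z_2z_1^jz_2z_1^{k-j}$ directly, which is the claimed $t$-free sum, if you prefer not to quote [13].
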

\begin{proof}
	Putting $p=2$, in \eqref{j2}, we have
	\begin{align*}
		\sum_{j=0}^{k} {(-1)}^jz_2z_1^j \S z_2z_1^{k-j}=B-At.
	\end{align*}
	From \cite[Theorem2.2.]{13}, we have for $p=2$,
	\begin{align*}
		& B=\sum_{j=0}^{k} {(-1)}^jz_2z_1^j \SH z_2z_1^{k-j}=2\sum_{\substack{{a}_1+ \dots +{a}_{k+2}=1\\ {a}_1, \dots , {a}_{k+2} \geq 0}} (a_{k+2}+1)z_{a_{k+2}+2}z_{a_1+1} \dots z_{a_{k+1}+1}.
	\end{align*}
	Now for $k=2$, we have
\begin{align*}
A=6z_4z_1^2+6z_2z_1z_3-2z_2z_3z_1.
\end{align*}
For $ k\geq 4$, we have
\begin{align*}
	&A=2\bigg[\sum_{l=1}^k\sum_{\substack{{\a}_1+ \dots +{\a}_{l+1}=2\\ {\a}_1, \dots , {\a}_{l+1} \geq 0}}\a_1 z_{\a_1+1} \dots z_{\a_l+1}z_{\a_{l+1}+2}z_1^{k-l} + \sum_{\substack{{\a}_1+ {\a}_{2}=2\\ {\a}_1, {\a}_{2} \geq 0}}\a_1z_{\a_l+\a_{2}+2}z_1^{k} \nonumber \\
	& \ \  + \sum_{\substack{{\a}_1+ \dots +{\a}_{k+2}=2\\ {\a}_1, \dots , {\a}_{k+2} \geq 0}}\a_1z_{\a_1+1} \dots z_{\a_k+1}z_{\a_{k+1}+\a_{k+2}+2} + \sum_{i=1}^{\frac{k}{2}}\{(-1)\}^i\sum_{\substack{{\a}_1+ \dots +{\a}_{i+2}=2\\ {\a}_1, \dots , {\a}_{i+2} \geq 0}}\a_1 z_{\a_1+1} \dots \nonumber \\
	& \ \  \dots z_{\a_i+1}z_{\a_{i+1}+\a_{i+2}+2}z_1^{k-i}+ \sum_{i=1}^{\frac{k}{2}-1}\{(-1)\}^i\sum_{\substack{{\a}_1+ \dots +{\a}_{k-i+2}=2\\ {\a}_1, \dots , {\a}_{k-i+2} \geq 0}}\a_1z_{\a_1+1} \dots z_{\a_{k-i}+1}z_{\a_{k-i+1}+\a_{k-i+2}+2}z_1^{i} \nonumber \\
	& + \sum_{l=1}^{k-1}\{-1+{(-1)}^l\}\sum_{\substack{{\a}_1+ \dots +{\a}_{l+1}=2\\ {\a}_1, \dots , {\a}_{l+1} \geq 0}}\a_1 z_{\a_1+1} \dots z_{\a_{l+1}+1}z_2z_1^{k-l-1}\Bigg]\nonumber \\
	&=\big(2\sum_{l=1}^{k}+4\sum_{l=1}^{\frac{k}{2}}{(-1)}^l\big)z_2z_1^{l-1}z_3z_1^{k-l}+ 4\sum_{l=1}^{\frac{k}{2}-1}{(-1)}^lz_2z_1^{k-l-1}z_3z_1^{l}+6z_4z_1^k+4z_2z_1^{k-1}z_3\nonumber \\
	& \ \ +2\Bigg\{\sum_{l=1}^{k}+\sum_{l=1}^{\frac{k}{2}}{(-1)}^l\Bigg\}(2z_3z_1^{l-1}z_2z_1^{k-l}+z_2^2z_1^{l-2}z_2z_1^{k-l}+z_2z_1z_2z_1^{l-3}z_2z_1^{k-l}+ \dots +z_2z_1^{l-2}z_2^2z_1^{k-l})\nonumber 
\end{align*} 
\begin{align*}
	&\ \ +2(2z_3z_1^{k-1}z_2+ z_2^2z_1^{k-2}z_2+z_2z_1z_2z_1^{k-3}z_2+ \dots +z_2z_1^{k-2}z_2^2)\nonumber \\
	& \ \ -4\sum_{\substack{l=1,\\ l~ \text{is odd}}}^{k-1}(2z_3z_1^{l}z_2z_1^{k-l-1}+z_2^2z_1^{l-1}z_2z_1^{k-l-1}+z_2z_1z_2z_1^{l-2}z_2z_1^{k-l-1}+ \dots +z_2z_1^{l-1}z_2^2z_1^{k-l-1})\nonumber \\
	& \ \ + 2\sum_{l=1}^{\frac{k}{2}-1}{(-1)}^l (2z_3z_1^{k-l-1}z_2z_1^{l}+z_2^2z_1^{k-l-2}z_2z_1^{l}+z_2z_1z_2z_1^{k-l-3}z_2z_1^{l}+ \dots +z_2z_1^{k-l-2}z_2^2z_1^{l})\nonumber \\
	&= \Biggl\{6\sum_{\substack{l=1,\\ l~ \text{is even}}}^{\frac{k}{2}}-2\sum_{\substack{l=1,\\ l~ \text{is odd}}}^{\frac{k}{2}}+2\sum_{l=\frac{k}{2}+1}^{k}\Biggr\}z_2z_1^{l-1}z_3z_1^{k-l}+4\sum_{l=1}^{\frac{k}{2}-1}{(-1)}^lz_2z_1^{k-l-1}z_3z_1^{l}+6z_4z_1^k+4z_2z_1^{k-1}z_3\nonumber \\
	& \ \ +\Bigg\{-2\sum_{l=1}^{k}{(-1)}^l+2\sum_{l=1}^{\frac{k}{2}}{(-1)}^l\Bigg\}(2z_3z_1^{l-1}z_2z_1^{k-l}+z_2^2z_1^{l-2}z_2z_1^{k-l}+z_2z_1z_2z_1^{l-3}z_2z_1^{k-l}\nonumber \\
	&\ \ + \dots +z_2z_1^{l-2}z_2^2z_1^{k-l})+2(2z_3z_1^{k-1}z_2+ z_2^2z_1^{k-2}z_2+z_2z_1z_2z_1^{k-3}z_2+ \dots +z_2z_1^{k-2}z_2^2)\nonumber \\
	& \ \ + 2\sum_{l=1}^{\frac{k}{2}-1}{(-1)}^l (2z_3z_1^{k-l-1}z_2z_1^{l}+z_2^2z_1^{k-l-2}z_2z_1^{l}+z_2z_1z_2z_1^{k-l-3}z_2z_1^{l}+ \dots +z_2z_1^{k-l-2}z_2^2z_1^{l})\nonumber \\
\end{align*}
\begin{align*}
	&=6z_4z_1^k+ \Biggl\{6\sum_{\substack{l=1,\\ l~ \text{is even}}}^{k}-2\sum_{\substack{l=1,\\ l~ \text{is odd}}}^{k} \Biggr\}z_2z_1^{l-1}z_3z_1^{k-l} -2\sum_{l=\frac{k}{2}+1}^{k-1}(2z_3z_1^{l-1}z_2z_1^{k-l}+z_2^2z_1^{l-2}z_2z_1^{k-l}\\
	& \ \ +z_2z_1z_2z_1^{l-3}z_2z_1^{k-l}+ \dots  +z_2z_1^{l-2}z_2^2z_1^{k-l}) + 2\sum_{l=1}^{\frac{k}{2}-1}{(-1)}^l (2z_3z_1^{k-l-1}z_2z_1^{l}+z_2^2z_1^{k-l-2}z_2z_1^{l} \nonumber \\
	& \ \ +z_2z_1z_2z_1^{k-l-3}z_2z_1^{l}  + \dots +z_2z_1^{k-l-2}z_2^2z_1^{l})\nonumber \\
	&=6z_4z_1^k+\Biggl\{6\sum_{\substack{l=1,\\ l~ \text{is even}}}^{k}-2\sum_{\substack{l=1,\\ l~ \text{is odd}}}^{k} \Biggr\}z_2z_1^{l-1}z_3z_1^{k-l}. 
\end{align*}
\end{proof}	
Applying the $\mathbb{Q}[t]$-linear map $Z^t$ on both sides of \eqref{j3} we get the following result:
\begin{proposition}
		For any positive integer $k$, we have
	\begin{align}
		\label{j4}
		&	\sum_{j=0}^{k} {(-1)}^j\z^t(2,\underbrace{1, \dots , 1}_{j})  \z^t(2,\underbrace{1, \dots , 1}_{k-j})\nonumber\\ &  = 
		\begin{cases}
			0& \text{if $k$ is odd}\\
			2\bigg(\sum_{\substack{{a}_1+ \dots +{a}_{k+2}=1\\ {a}_1, \dots , {a}_{k+2} \geq 0}} (a_{k+2}+1)\z^t(a_{k+2}+2,a_1+1, \dots , a_{k+1}+1)\\
			+t\sum_{i=0}^{k-1}\{2{(-1)}^i-1\}\z^t(2,\underbrace{1, \dots , 1}_{i}, 3, \underbrace{1, \dots , 1}_{k-i-1})-3t\z^t(4, \underbrace{1, \dots , 1}_{k})\bigg) & \text{if $k$ is even}. 
		\end{cases}
	\end{align}
\end{proposition}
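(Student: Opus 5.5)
The plan is to obtain the proposition by applying the $\mathbb{Q}[t]$-linear map $Z^t$ to both sides of the word identity \eqref{j3} in the preceding corollary, and then to use the homomorphism property of $Z^t$ on the left-hand side.

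First, observe that every word in \eqref{j3} lies in $\hh_t^0$. Indeed $z_2z_1^j=xy\,y^j$ begins with $x$ and ends with $y$, and likewise every word $z_{a_{k+2}+2}z_{a_1+1}\cdots z_{a_{k+1}+1}$, $z_2z_1^iz_3z_1^{k-i-1}$ and $z_4z_1^k$ begins with $x$ and ends with $y$. So $Z^t$ is defined on both sides.

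Next, treat the left-hand side. Since $Z^t:(\hh_t^0,\S)\to\mathbb{R}[t]$ is an algebra homomorphism, each term satisfies
\[
Z^t\bigl(z_2z_1^j \S z_2z_1^{k-j}\bigr)=Z^t(z_2z_1^j)\,Z^t(z_2z_1^{k-j}).
\]
By the definition of $Z^t$, $Z^t(z_2z_1^j)=Z^t(xy\,y^j)=\z^t(2,\{1\}^j)$. Summing with the signs $(-1)^j$ gives exactly the left-hand side of \eqref{j4}.

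For the right-hand side, use $\mathbb{Q}[t]$-linearity and read off indices term by term. A word $z_{c_1}\cdots z_{c_n}$ with $c_1>1$ maps to $\z^t(c_1,\dots,c_n)$. Hence:
\begin{itemize}
\item $z_{a_{k+2}+2}z_{a_1+1}\cdots z_{a_{k+1}+1}$ maps to $\z^t(a_{k+2}+2,a_1+1,\dots,a_{k+1}+1)$;
\item $z_2z_1^iz_3z_1^{k-i-1}$ maps to $\z^t(2,\{1\}^i,3,\{1\}^{k-i-1})$;
\item $z_4z_1^k$ maps to $\z^t(4,\{1\}^k)$.
\end{itemize}
For odd $k$ the right-hand side is $0$, and $Z^t(0)=0$. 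For even $k$ the coefficients $2$, $2t\{2(-1)^i-1\}$ and $-6t$ carry over unchanged, which yields \eqref{j4}.

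The only point needing care is that \eqref{j3} is correctly stated, in particular the case $k=2$. There the corollary's proof computes $A$ separately as $6z_4z_1^2+6z_2z_1z_3-2z_2z_3z_1$. I would check that this agrees with the general expression $\sum_{i=0}^{k-1}\{2(-1)^i-1\}z_2z_1^iz_3z_1^{k-i-1}$ together with the $z_4z_1^k$ term, with the sign conventions of $B-At$. I would also check that the sign of the $t$-terms after substituting $-At$ matches \eqref{j3}. Granting the corollary as stated, the proposition follows immediately.
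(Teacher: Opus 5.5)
Your proposal is correct and is exactly the paper's argument: the proposition follows by applying the $\mathbb{Q}[t]$-linear algebra homomorphism $Z^t$ to both sides of \eqref{j3}, with every word lying in $\hh_t^0$. The $k=2$ consistency check you flag also works out, since the general $t$-part $2t\sum_{i=0}^{1}\{2(-1)^i-1\}z_2z_1^iz_3z_1^{1-i}-6tz_4z_1^2$ equals $-t(6z_4z_1^2+6z_2z_1z_3-2z_2z_3z_1)$, which is precisely $-tA$.
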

\textbf{Remark:} For $t=0$, \eqref{j4}, gives
\begin{align}
	&	\sum_{j=0}^{k} {(-1)}^j\z(2,\underbrace{1, \dots , 1}_{j})  \z(2,\underbrace{1, \dots , 1}_{k-j})\nonumber\\ &  = 
	\begin{cases}
		0& \text{if $k$ is odd}\\
		2\sum_{\substack{{a}_1+ \dots +{a}_{k+2}=1\\ {a}_1, \dots , {a}_{k+2} \geq 0}} (a_{k+2}+1)\z(a_{k+2}+2,a_1+1, \dots , a_{k+1}+1)& \text{if $k$ is even}. 
	\end{cases}
\end{align}
By the duality of multiple zeta values \cite{14}, and using the expression for multiple zeta-star values \cite[Eq. (5)]{13}, we get
\begin{align}
	&	\sum_{j=0}^{k} {(-1)}^j\z(2,\underbrace{1, \dots , 1}_{j})  \z(k-j+2)\nonumber  = 
	\begin{cases}
		0& \text{if $k$ is odd}\\
		2\z^{\star}(k+3,1)& \text{if $k$ is even}, 
	\end{cases}
\end{align}
which is a particular case of \cite[Theorem 2.3]{13} and is equivalent to
\begin{align}
	\label{equn35}
	&	\sum_{j=0}^{k} {(-1)}^j\z(j+2)  \z(k-j+2)  = 
	\begin{cases}
		0& \text{if $k$ is odd}\\
		2\z^{\star}(k+3,1)& \text{if $k$ is even}. 
	\end{cases}
\end{align}
Taking $k=2, 4, 6$ and so on, we get the following relations between mzvs and mzsvs:
\begin{align*}
	&\z^{\star}(5,1)=\z(2)\z(4)-\dfrac{1}{2}\z(3)\z(3)\\
	&\z^{\star}(7,1)=\z(2)\z(6)-\z(3)\z(5)+\dfrac{1}{2}\z(4)\z(4)\\
	&\z^{\star}(9,1)=\z(2)\z(8)-\z(3)\z(7)+\z(4)\z(6)-\dfrac{1}{2}\z(5)\z(5),
\end{align*}
and so on.
For $t=1$, \eqref{j4}, gives
\begin{align}
	&	\sum_{j=0}^{k} {(-1)}^j\z^{\star}(2,\underbrace{1, \dots , 1}_{j})  \z^{\star}(2,\underbrace{1, \dots , 1}_{k-j})\nonumber\\ &  = 
	\begin{cases}
		0& \text{if $k$ is odd}\\
		2\bigg(\sum_{\substack{{a}_1+ \dots +{a}_{k+2}=1\\ {a}_1, \dots , {a}_{k+2} \geq 0}} (a_{k+2}+1)\z^{\star}(a_{k+2}+2,a_1+1, \dots , a_{k+1}+1)\\
		+ \sum_{i=0}^{k-1}\{2{(-1)}^i-1\}\z^{\star}(2,\underbrace{1, \dots , 1}_{i}, 3, \underbrace{1, \dots , 1}_{k-i-1})-3t\z^{\star}(4, \underbrace{1, \dots , 1}_{k})\bigg)&\text{if $k$ is even}. 
	\end{cases}
\end{align}
\textbf{Remark:}
We have from \cite[3.7 ]{15}, for any odd positive integer $k$,
\begin{align}\label{42}
	&\z^{\star}(k-1,1)=\dfrac{k+1}{2}\z(k)-\sum_{\substack{2 \leq j \leq k-2,\\j: odd}}\z(j)  \z(k-j)
\end{align}
Also, \eqref{equn35} gives for an odd $k$,
\begin{align}
&\sum_{\substack{0 \leq j \leq k}}{(-1)}^j\z(j+2)  \z(k-j+2)=0\nonumber \\
&\implies \sum_{\substack{2 \leq j \leq k+2}}{(-1)}^j\z(j)  \z(k-j+4)=0\nonumber \\
&\implies \sum_{\substack{2 \leq j \leq k-2}}{(-1)}^j\z(j)  \z(k-j)=0\nonumber \\
&\implies \sum_{\substack{2 \leq j \leq k-2\\j:even}}\z(j)  \z(k-j)=\sum_{\substack{2 \leq j \leq k-2\\j:odd}}\z(j)  \z(k-j)..
\end{align}
Thus (\ref{42}) can also be written as
\begin{align}
	&\z^{\star}(k-1,1)=\dfrac{k+1}{2}\z(k)-\sum_{\substack{2 \leq j \leq k-2,\\j: even}}\z(j)  \z(k-j).
\end{align}
\section{Some observations}
Here we compute the $t$-shuffle product formula for $x^ay^r \S x^{b_1}y^{s_1}x^{b_2}y^{s_2} $, where $a, b_1, b_2 \geq 0, r,s_1, s_2 \geq 1$.
Consider $y$ in $x^ay^r$ as $y_1$ and $y$ in $x^{b_1}y^{s_1}x^{b_2}y^{s_2} $ as $y_2$. Then there are four cases to be considered:
\begin{align*}
	&(i) \underbrace{y_1 \dots y_1}_{r_1}y_2(\underbrace{y_1 \dots y_1}_{r_2} \SH \underbrace{y_2 \dots y_2}_{s_1-2})y_2\underbrace{y_1 \dots y_1}_{r_3}y_2(\underbrace{y_1 \dots y_1}_{r_4} \S \underbrace{y_2 \dots y_2}_{s_2-1})\\
	& \text{where}~ r_1+r_2+r_3+r_4=r~ \text{with}~ r_1 \geq 1 ~\text{and}~ r_2, r_3, r_4 \geq 0;\\
	&(ii) \underbrace{y_2 \dots y_2}_{l}y_1(\underbrace{y_1 \dots y_1}_{r_1-1} \SH \underbrace{y_2 \dots y_2}_{s_1-l-1})y_2\underbrace{y_1 \dots y_1}_{r_2}y_2(\underbrace{y_1 \dots y_1}_{r_3} \S \underbrace{y_2 \dots y_2}_{s_2-1})\\
	& \text{where}~ r_1+r_2+r_3=r~ \text{with}~ r_1 \geq 1, r_2, r_3,  \geq 0 ~\text{and}~ 1 \leq l \leq s_1-1;\\
	&(iii) \underbrace{y_2 \dots y_2}_{s_1}\underbrace{y_1 \dots y_1}_{r_1}y_2(\underbrace{y_1 \dots y_1}_{r_2} \S \underbrace{y_2 \dots y_2}_{s_2-1})\\
	& \text{where}~ r_1+r_2=r~ \text{with}~ r_1 \geq 1  ~\text{and}~ r_2  \geq 0;\\
	&(iv) \underbrace{y_2 \dots y_2}_{s_1}\underbrace{y_2 \dots y_2}_{l}y_1(\underbrace{y_1 \dots y_1}_{r-1} \S \underbrace{y_2 \dots y_2}_{s_2-l})\\
	& \text{where}~ r_1+r_2=r~ \text{with}~ r_1 \geq 1  ~\text{and}~ r_2  \geq 0.
\end{align*}
Now $x^ay^r \S x^{b_1}y^{s_1}x^{b_2}y^{s_2}=x^ay^r \SH x^{b_1}y^{s_1}x^{b_2}y^{s_2}+\mathcal{P} $, where $\mathcal{P}$ is the sum of all the terms involving $t$ and is obtained by replacing one $y$ by $-tx$ in the above four cases.
We have \cite[Eq. (3.3)]{1},
\begin{align}
	&x^ay^r \SH x^{b_1}y^{s_1}x^{b_2}y^{s_2}\nonumber \\
	&=\sum_{\substack{{\a}_1+ \dots +{\a}_{r+s_1+s_2}=a+b_1+b_2\\ {\a}_1, \dots , {\a}_{r+s_1+s_2} \geq 0}}\Biggl\{\sum_{\substack{r_1+r_2+r_3+r_4=r;\\r_1 \geq 1, r_2, r_3, r_4 \geq 0}} \binom{{\a}_1}{a}\binom{r_2+s_1-2}{r_2}\binom{r_4+s_2-1}{r_4}{\delta}_{{\a}_1+ \dots + \a_{r_1+1}, a+b_1} \nonumber \\
	& \ \ \times \prod_{k=r_1+2}^{r_1+r_2+s_1}{\delta}_{\a_k,0}\prod_{k=r_1+r_2+r_3+s_1+2}^{r+s_1+s_2}{\delta}_{\a_k,0} \Biggr\}  x^{\a_1}y \dots x^{{\a}_{r_1+r_2+r_3+s_1+1}}y^{i+1}xy^{r_4+s_2-i} \nonumber \\
	& \ \ \ + \sum_{\substack{r_1+r_2+r_3=r;\\r_1 \geq 1, r_2, r_3,  \geq 0\\ 1 \leq l \leq s_1-1}} \binom{{\a}_1}{b_1}\binom{r_1+s_1-l-2}{r_1-1}\binom{r_3+s_2-1}{r_3}{\delta}_{{\a}_1+ \dots + \a_{l+1}, a+b_1}  \prod_{k=l+2}^{r_1+s_1}{\delta}_{\a_k,0}\prod_{k=r_1+r_2+s_1+2}^{r+s_1+s_2}{\delta}_{\a_k,0}\nonumber \\
	& \ \ + \sum_{\substack{r_1+r_2=r;\\r_1 \geq 1, r_2,\geq 0}}\binom{{\a}_1}{b_1}\binom{\a_{s_1+1}}{a+b_1-\sum_{i=1}^{s_1}\a_i} \binom{r_2+s_2-1}{r_2}\prod_{k=r_1+s_1+2}^{r+s_1+s_2}{\delta}_{\a_k,0}+\sum_{l=1}^{s_2}\binom{{\a}_1}{b_1}\binom{\a_{s_1+1}}{b_2}\nonumber\\
	& \ \ \times \binom{r+s_2-l-1}{r-1}\prod_{k=s_1+l+2}^{r+s_1+s_2}{\delta}_{\a_k,0} \Biggr\}  x^{\a_1}y \dots x^{{\a}_{r+s_1+s_2}}y.
\end{align}
For case $(i)$, we have the following subcases, where we obtain new terms than in the shuffle product:\\
$(a).$ For $r_4 \neq 0$ and $s_2 \neq 1$, new term involving $t$ is,
\begin{align}
	\label{p1234}
	&-t(1-\delta_{s_2,1})\sum_{\substack{r_1+r_2+r_3+r_4=r;\\r_1, r_4 \geq 1, r_2, r_3, \geq 0; \\{\a}_1+ \dots +{\a}_{r_1+r_2+r_3+s_1+1}=a+b_1+b_2\\ {\a}_1, \dots , {\a}_{r_1+r_2+r_3+s_1+1} \geq 0}} \Biggl\{\binom{{\a}_1}{a}\binom{r_2+s_1-2}{r_2}\sum_{i=\text{min}\{r_4, s_2-1\}-1}^{r_4+s_2-3}\bigg\{\binom{i}{r_4-1}+\binom{i}{s_2-2}\bigg\} \nonumber\\
	& \ \ \hspace{2cm} {\delta}_{{\a}_1+ \dots + \a_{r_1+1}, a+b_1}\prod_{k=r_1+2}^{r_1+r_2+s_1}{\delta}_{\a_k,0} \Biggr\}x^{\a_1}y \dots x^{{\a}_{r_1+r_2+r_3+s_1+1}}y^{i+1}xy^{r_4+s_2-i-2}.
\end{align}
$(b).$ When $r_4=0, r_3 \neq 0$, we have new terms replacing $r^{th}$ $y_1$ with $-tx$, i.e., $(r_1+r_2+r_3+s_1)^{th}$ $y$ by $-tx$ with the same coefficient as shuffle product. In this case, new terms are given 
\begin{align}
	&-t\sum_{\substack{r_1+r_2+r_3=r;\\r_1, r_3 \geq 1, r_2 \geq 0; \\{\a}_1+ \dots +{\a}_{r_1+r_2+r_3+s_1+1}=a+b_1+b_2\\ {\a}_1, \dots , {\a}_{r_1+r_2+r_3+s_1+1} \geq 0}} \Biggl\{\binom{{\a}_1}{a}\binom{r_2+s_1-2}{r_2}{\delta}_{{\a}_1+ \dots + \a_{r_1+1}, a+b_1}\prod_{k=r_1+2}^{r_1+r_2+s_1}{\delta}_{\a_k,0} \Biggr\} \nonumber\\
	& \ \ \hspace{2cm} \times x^{\a_1}y \dots x^{{\a}_{r_1+r_2+r_3+s_1-1}}y   x^{{\a}_{r_1+r_2+r_3+s_1}+{\a}_{r_1+r_2+r_3+s_1+1}+1}y^{s_2}.
\end{align} 
$(c).$ When $s_2=1, r_4 \neq 0$, we have new terms replacing $(s_1+1)^{th}$ $y_2$ with $-tx$, i.e.,   $(r_1+r_2+r_3+s_1+1)^{th}$ $y$ by $-tx$ with the same coefficient as shuffle product. New terms are given by
\begin{align}
	&-t\delta_{s_2,1}\sum_{\substack{r_1+r_2+r_3+r_4=r;\\r_1, r_4 \geq 1, r_2, r_3\geq 0; \\{\a}_1+ \dots +{\a}_{r_1+r_2+r_3+s_1+1}=a+b_1+b_2\\ {\a}_1, \dots , {\a}_{r_1+r_2+r_3+s_1+1} \geq 0}} \Biggl\{\binom{{\a}_1}{a}\binom{r_2+s_1-2}{r_2}{\delta}_{{\a}_1+ \dots + \a_{r_1+1}, a+b_1}\prod_{k=r_1+2}^{r_1+r_2+s_1}{\delta}_{\a_k,0} \Biggr\} \nonumber\\
	& \ \ \hspace{2cm} \times x^{\a_1}y \dots x^{{\a}_{r_1+r_2+r_3+s_1}}y   x^{{\a}_{r_1+r_2+r_3+s_1+1}+1}y^{r_4}.
\end{align} 
$(d).$ When $r_3=r_4=0, r_2\neq 0$, we have new term replacing $r^{th}$ $y_1$ with $-tx$, i.e., $(r_1+r_2+1)^{th}$ $y$ by $-tx$ with the same coefficient as shuffle product. Therefore, we have 
$$\underbrace{y_1 \dots y_1}_{r_1}y_2(\underbrace{y_1 \dots y_1}_{r_2-1}(-tx) \SH \underbrace{y_2 \dots y_2}_{s_1-2}){y_2}^{s_2+1}$$
So, the new term is
\begin{align}
	&-t\sum_{\substack{r_1+r_2=r;\\r_1, r_2 \geq 1; \\{\a}_1+ \dots +{\a}_{r_1+r_2+s_1+s_2}=a+b_1+b_2\\ {\a}_1, \dots , {\a}_{r_1+r_2+s_1+s_2} \geq 0}} \Biggl\{\binom{{\a}_1}{a}\binom{r_2+s_1-2}{r_2}{\delta}_{{\a}_1+ \dots + \a_{r_1+1}, a+b_1}\prod_{k=r_1+2}^{r_1+r_2+s_1}{\delta}_{\a_k,0}\prod_{k=r_1+r_2+s_1+2}^{r+s_1+s_2}{\delta}_{\a_k,0}  \nonumber\\
	& \ \ \hspace{2cm}\times  \sum_{i=r_2-1}^{r_2+s_1-4} \binom{i}{r_2-1}\Biggr\}\times x^{\a_1}y \dots x^{\a_{r_1}+1}y^{i+1}xy^{r_2+s_1-i-2}x^{{\a}_{r_1+r_2+s_1+1}}y \dots x^{\a_{r+s_1+s_2}}y.
\end{align}
$(e).$ When $r_2=r_3=r_4=0$, we have new terms replacing $r^{th}$ $y_1$ by $-tx$ with the same coefficient as shuffle product
\begin{align}
	&-t\sum_{\substack{{\a}_1+ \dots +{\a}_{r+s_1+s_2}=a+b_1+b_2\\ {\a}_1, \dots , {\a}_{r+s_1+s_2} \geq 0}} \Biggl\{\binom{{\a}_1}{a}{\delta}_{{\a}_1+ \dots + \a_{r+1}, a+b_1}\prod_{k=r+2}^{r+s_1}{\delta}_{\a_k,0} \prod_{k=r+s_1+2}^{r+s_1+s_2}{\delta}_{\a_k,0} \Biggr\} \nonumber\\
	& \ \ \hspace{2cm} \times x^{\a_1}y \dots x^{{\a}_{r-1}}y   x^{\a_r+{\a}_{r+1}+1}yx^{\a_{r+2}}y \dots x^{\a_{r+s_1+s_2}}y.
\end{align} 
For case $(ii)$, we have the following subcases:\\
$(a).$ For $r_3 \neq 0$ and $s_2 \neq 1$, new term involving $t$ is,
\begin{align}
	&-t(1-\delta_{s_2,1})\sum_{\substack{r_1+r_2+r_3=r;\\r_1, r_3 \geq 1, r_2 \geq 0; \\{\a}_1+ \dots +{\a}_{r_1+r_2+s_1+1}=a+b_1+b_2\\ {\a}_1, \dots , {\a}_{r_1+r_2+r_3+s_1+1} \geq 0\\1 \leq l \leq s_1-1}} \Biggl\{\binom{{\a}_1}{b_1}\binom{r_1+s_1-l-2}{r_1-1}\sum_{i=\text{min}\{r_3, s_2-1\}-1}^{r_3+s_2-3}\bigg\{\binom{i}{r_3-1}+\binom{i}{s_2-2}\bigg\} \nonumber\\
	& \ \ \hspace{2cm} {\delta}_{{\a}_1+ \dots + \a_{l+1}, a+b_1}\prod_{k=l+2}^{r_1+s_1}{\delta}_{\a_k,0} \Biggr\}x^{\a_1}y \dots x^{{\a}_{r_1+r_2+s_1+1}}y^{i+1}xy^{r_3+s_2-i-2}.
\end{align}
$(b).$ When $r_3=0, r_2 \neq 0$, we have new terms replacing $r^{th}$ $y_1$ with $-tx$, i.e., $(r_1+r_2+s_1)^{th}$ $y$ by $-tx$ with the same coefficient as shuffle product, and given by
\begin{align}
	&-t\sum_{\substack{r_1+r_2=r;\\r_1, r_2 \geq 1; \\{\a}_1+ \dots +{\a}_{r_1+r_2+s_1+1}=a+b_1+b_2\\ {\a}_1, \dots , {\a}_{r_1+r_2+s_1+1} \geq 0\\1 \leq l \leq s_1-1}} \Biggl\{\binom{{\a}_1}{b_1}\binom{r_1+s_1-l-2}{r_1-1}{\delta}_{{\a}_1+ \dots + \a_{l+1}, a+b_1}\prod_{k=l+2}^{r_1+s_1}{\delta}_{\a_k,0} \Biggr\} \nonumber\\
	& \ \ \hspace{2cm} \times x^{\a_1}y \dots x^{{\a}_{r_1+r_2+s_1-1}}yx^{{\a}_{r_1+r_2+s_1}+{\a}_{r_1+r_2+s_1+1}+1}y^{s_2}.	
\end{align} 
$(c).$ When $s_2=1, r_3\neq 0$, we have new term replacing $(s_1+1)^{th}$ $y_2$ with $-tx$, i.e., $(r_1+r_2+s_1+1)^{th}$ $y$ by $-tx$ with the same coefficient as shuffle product, and are given by
\begin{align}
	&-t\delta_{s_2,1}\sum_{\substack{r_1+r_2+r_3=r;\\r_1, r_3 \geq 1, r_2 \geq 0; \\{\a}_1+ \dots +{\a}_{r_1+r_2+s_1+1}=a+b_1+b_2\\ {\a}_1, \dots , {\a}_{r_1+r_2+s_1+1} \geq 0\\1 \leq l \leq s_1-1}} \Biggl\{\binom{{\a}_1}{b_1}\binom{r_1+s_1-l-2}{r_1-1}{\delta}_{{\a}_1+ \dots + \a_{l+1}, a+b_1}\prod_{k=l+2}^{r_1+s_1}{\delta}_{\a_k,0} \Biggr\} \nonumber\\
	& \ \ \hspace{2cm} \times x^{\a_1}y \dots x^{{\a}_{r_1+r_2+s_1}}yx^{{\a}_{r_1+r_2+s_1+1}+1}y^{r_3}. 
\end{align} 
$(d).$ When $r_3=r_2=0, r_1\neq 1$, we have new term replacing $r^{th}$ $y_1$ with $-tx$, i.e., $(r_1+l+1)^{th}$ $y$ by $-tx$ with the same coefficient as shuffle product. Therefore, we have 
$$\underbrace{y_2 \dots y_2}_{l}y_1(\underbrace{y_1 \dots y_1}_{r_1-1}(-tx) \SH \underbrace{y_2 \dots y_2}_{s_1-l-1}){y_2}^{s_2+1}$$
So, the new terms are given by
\begin{align}
	&-t(1-\delta_{r,1})\sum_{\substack{{\a}_1+ \dots +{\a}_{r+s_1+s_2}=a+b_1+b_2\\ {\a}_1, \dots , {\a}_{r+s_1+s_2} \geq 0;\\1 \leq l \leq s_1-1}} 
	\Biggl\{\binom{{\a}_1}{b_1}{\delta}_{{\a}_1+ \dots + \a_{l+1}, a+b_1}\prod_{k=l+2}^{r_1+s_1}{\delta}_{\a_k,0}\prod_{k=r+s_1+2}^{r+s_1+s_2}{\delta}_{\a_k,0} \sum_{i=r-2}^{r+s_1-l-3}\binom{i}{r-2}\Biggr\} \nonumber\\
	& \ \ \hspace{2cm} \times x^{\a_1}y \dots x^{\a_{l+1}}y^{i+1}xy^{r+s_1-l-2} x^{{\a}_{r+s_1+1}}y \dots x^{{\a}_{r+s_1+s_2}}y.
\end{align}
$(e).$ When $r_2=r_3=0, r_1=1$, we have new term replacing $r^{th}$ $y_1$ with $-tx$, i.e., $(l+1)^{th}$ $y$ by $-tx$ with the same coefficient as shuffle product, and are given by
\begin{align}
	&-t\delta_{r,1}\sum_{\substack{{\a}_1+ \dots +{\a}_{r+s_1+1}=a+b_1+b_2\\ {\a}_1, \dots , {\a}_{r+s_1+1} \geq 0;\\1 \leq l \leq s_1-1}} 
	\Biggl\{\binom{{\a}_1}{b_1}{\delta}_{{\a}_1+ \dots + \a_{l+1}, a+b_1}\prod_{k=l+2}^{r+s_1}{\delta}_{\a_k,0}\Biggr\} \nonumber\\
	& \ \ \hspace{2cm} \times x^{\a_1}y \dots . x^{\a_l}yx^{\a_{l+1}+\a_{l+2}+1}yx^{\a_{l+3}}y \dots x^{\a_{r+s_1+1}}y^{s_2}.
\end{align}
For case $(iii)$, consider the following subcases:\\
$(a).$ For $r_2 \neq 0$ and $s_2 \neq 1$, new term involving $t$ is,
\begin{align}
	&-t(1-\delta_{s_2,1})\sum_{\substack{r_1+r_2=r;\\r_1, r_2 \geq 1; \\{\a}_1+ \dots +{\a}_{r_1+s_1+1}=a+b_1+b_2\\ {\a}_1, \dots , {\a}_{r_1+s_1+1} \geq 0}} \Biggl\{\binom{{\a}_1}{b_1}\binom{\a_{s_1+1}}{a+b_1-\sum_{i=1}^{s_1}\a_i}\sum_{i=\text{min}\{r_2, s_2-1\}-1}^{r_3+s_2-3}\bigg\{\binom{i}{r_2-1} \nonumber\\
	& \ \ \hspace{2cm} +\binom{i}{s_2-2}\bigg\}\Biggr\}  x^{\a_1}y \dots x^{{\a}_{r_1+s_1+1}}y^{i+1}xy^{r_2+s_2-i-2}.
\end{align}
$(b).$ When $r_2=0, r_1 \neq 0$, we have new terms replacing $r^{th}$ $y_1$ with $-tx$, i.e.,$(r_1+s_1)^{th}$  $y$ by $-tx$ with the same coefficient as shuffle product, and are given by
\begin{align}
	&-t\sum_{\substack{{\a}_1+ \dots +{\a}_{r+s_1+1}=a+b_1+b_2\\ {\a}_1, \dots , {\a}_{r+s_1+1} \geq 0}} \binom{{\a}_1}{b_1}\binom{\a_{s_1+1}}{a+b_1-\sum_{i=1}^{s_1}\a_i}  x^{\a_1}y \dots x^{{\a}_{r+s_1-1}}yx^{{\a}_{r+s_1}+{\a}_{r+s_1+1}+1}y^{s_2}.	
\end{align} 
$(c).$ When $s_2=1, r_2\neq 0$, we have new term replacing $(s_1+1)^{th}$ $y_2$ with $-tx$, i.e., $(r_1+s_1+1)^{th}$ $y$ by $-tx$ with the same coefficient as shuffle product, and these terms are given by
\begin{align}
	&-t\delta_{s_2,1}\sum_{\substack{r_1+r_2=r;\\r_1, r_2 \geq 1;\\{\a}_1+ \dots +{\a}_{r_1+s_1+1}=a+b_1+b_2\\ {\a}_1, \dots , {\a}_{r_1+s_1+1} \geq 0}} \binom{{\a}_1}{b_1}\binom{\a_{s_1+1}}{a+b_1-\sum_{i=1}^{s_1}\a_i}  x^{\a_1}y \dots x^{{\a}_{r_1+s_1}}yx^{{\a}_{r_1+s_1+1}+1}y^{r_2}.	 
\end{align} 
For case $(iv)$, we have the following subcases:\\
$(a).$ For $r \neq 1$ and $s_2 \neq l$, new term involving $t$ is,
\begin{align}
	&-t(1-\delta_{r,1})\sum_{\substack{{\a}_1+ \dots +{\a}_{s_1+l+1}=a+b_1+b_2\\ {\a}_1, \dots , {\a}_{s_1+l+1} \geq 0;\\1 \leq l < s_2}} \Biggl\{\binom{{\a}_1}{b_1}\binom{\a_{s_1+1}}{b_2}\sum_{i=\text{min}\{r-1, s_2-l\}-1}^{r+s_2-l-3}\bigg\{\binom{i}{r-2}+\binom{i}{s_2-l-1}\bigg\}\Biggr\} \nonumber\\
	& \ \ \hspace{2cm} \times  x^{\a_1}y \dots x^{{\a}_{s_1+l+1}}y^{i+1}xy^{r+s_2-l-i-2}.
\end{align}
$(b).$ When $r=1, s_2\neq l$, we have new term replacing $1^{st}$ $y_1$ with $-tx$, i.e., $(s_1+l+1)^{th}$  $y$ by $-tx$ with the same coefficient as shuffle product. Therefore, new terms are given by 
\begin{align}
	&-\delta_{r,1}t\sum_{\substack{{\a}_1+ \dots +{\a}_{s_1+l+1}=a+b_1+b_2\\ {\a}_1, \dots , {\a}_{s_1+l+1} \geq 0;\\1 \leq l < s_2}} \Biggl\{\binom{{\a}_1}{b_1}\binom{\a_{s_1+1}}{b_2}  \Biggr\}x^{\a_1}y \dots x^{{\a}_{s_1+l}}yx^{{\a}_{s_1+l+1}+1}y^{s_2-l}.	
\end{align} 
$(c).$ When $s_2=l$, we have new term replacing $(s_1+l)^{th}$ $y_2$ by $-tx$ with the same coefficient as shuffle product. Therefore, the new terms are given by
\begin{align}
	\label{p123}
	&-t\sum_{\substack{{\a}_1+ \dots +{\a}_{s_1+s_2+1}=a+b_1+b_2\\ {\a}_1, \dots , {\a}_{s_1+s_2+1} \geq 0}} \Biggl\{\binom{{\a}_1}{b_1}\binom{\a_{s_1+1}}{b_2}  \Biggr\}x^{\a_1}y \dots x^{{\a}_{s_1+s_2-1}}yx^{{\a}_{s_1+s_2}+{\a}_{s_1+s_2+1}+1}y^{r}.			 
\end{align} 
Therefore $\mathcal{P}$ is the sum of all expressions \eqref{p1234} to \eqref{p123}.\\

\textbf{Acknowledgment:} The research of first author is supported by the University Grants Commission (UGC), India through NET-JRF (Ref. No. 191620198830).
	
	{}

\end{document}